\def\NZQ{\mathbb}               
\def\NN{{\NZQ N}}
\def\QQ{{\NZQ Q}}
\def\ZZ{{\NZQ Z}}
\def\RR{{\NZQ R}}
\def\PP{{\NZQ P}}
\newtheorem{Theorem}{Theorem}[section]
\newtheorem{Lemma}[Theorem]{Lemma}
\newtheorem{Corollary}[Theorem]{Corollary}
\newtheorem{Proposition}[Theorem]{Proposition}
\newtheorem{Remark}[Theorem]{Remark}
\newtheorem{Example}[Theorem]{Example}
\newtheorem{Definition}[Theorem]{Definition}
\let\epsilon\varepsilon
\let\phi=\varphi
\let\kappa=\varkappa
\def\mR{m_R}
\def\Ass{\operatorname{Ass}}
\def\mfp{\mathfrak p}
\def\mfa{\mathfrak a}
\def\Spec{\operatorname{Spec}}
\def \Min{\operatorname{Min}}
\begin{document}
\title{Multiplicities and Mixed Multiplicities  of arbitrary Filtrations}

\author{Steven Dale Cutkosky}
\author{Parangama Sarkar}

\thanks{The first author was partially supported by NSF grant DMS-1700046.}
\thanks{The second author was partially supported by the DST, India: INSPIRE Faculty Fellowship.}

\address{Steven Dale Cutkosky, Department of Mathematics,
University of Missouri, Columbia, MO 65211, USA}
\email{cutkoskys@missouri.edu}

\address{Parangama Sarkar,  Department of Mathematics,
Indian Institute of Technology, Palakkad, India}
\email{parangama@iitpkd.ac.in}

\begin{abstract} We develop a theory of multiplicities and mixed multiplicities of  filtrations, extending the theory for filtrations of $m$-primary ideals to arbitrary (not necessarily Noetherian) filtrations. The mixed multiplicities of $r$ filtrations on an analytically unramified local ring $R$ come from  the coefficients of a suitable homogeneous polynomial in $r$ variables of degree equal to the dimension of the ring, analogously to the classical case of the mixed multiplicities of $m$-primary ideals in a local ring. 
We prove that the Minkowski inequalities hold for arbitrary filtrations. The characterization of equality in the Minkowski inequality for m-primary ideals in a local ring by Teissier, Rees and Sharp and Katz does not extend to arbitrary filtrations, but we show that they are true in a large and important subcategory of filtrations. We define divisorial and bounded filtrations. The filtration of powers of a fixed ideal is a bounded filtration, as is a divisorial filtration.  We show that in an excellent local domain, the  characterization of equality in the Minkowski equality is characterized by  the condition that the  integral closures of suitable Rees like algebras are the same, strictly generalizing the theorem of Teissier, Rees and Sharp and Katz. We also prove that a theorem of Rees characterizing the inclusion  of ideals with the same multiplicity generalizes to bounded filtrations in excellent local domains.
We give a number of other applications, extending classical theorems for ideals. 
\end{abstract}

\keywords{Mixed Multiplicity, Valuation, Filtration, Divisorial Filtration}
\subjclass[2010]{13H15, 13A18, 14C17}

\maketitle

\section{Introduction} In this paper we extend the theory of multiplicities and mixed multiplicities of filtrations of $m_R$-primary ideals in a local ring $R$ to arbitrary filtrations. We prove that these multiplicities enjoy  good properties and derive  applications.

The study of mixed multiplicities of $m_R$-primary ideals in a  local ring $R$ with maximal ideal $m_R$  was initiated by Bhattacharya \cite{Bh}, Rees  \cite{R} and Teissier  and Risler \cite{T1}. 
In \cite{CSS}  the notion of mixed multiplicities is extended to arbitrary,  not necessarily Noetherian, filtrations of $R$ by $m_R$-primary ideals ($m_R$-filtrations). It is shown in \cite{CSS} that many basic theorems for mixed multiplicities of $m_R$-primary ideals are true for $m_R$-filtrations. 

The development of the subject of mixed multiplicities and its connection to Teissier's work on equisingularity \cite{T1} is explained in \cite{GGV}.   A  survey of the theory of  multiplicities and mixed multiplicities of  $m_R$-primary ideals  can be found in  \cite[Chapter 17]{HS}, including discussion of the results of  the papers \cite{R1} of Rees and \cite{S} of  Swanson, and the theory of Minkowski inequalities of Teissier \cite{T1}, \cite{T2}, Rees and Sharp \cite{RS} and Katz \cite{Ka}.   Later, Katz and Verma \cite{KV}, generalized mixed multiplicities to ideals that are not all $m_R$-primary.  Trung and Verma \cite{TV} computed mixed multiplicities of monomial ideals from mixed volumes of suitable polytopes.  Mixed multiplicities are also used by Huh in the analysis of the coefficients of the chromatic polynomial of graph theory in \cite{H}.

A notion of mixed multiplicity for arbitrary ideals is introduced by Bhattacharya in \cite{Bh}.  This notion of mixed multiplicity is extended to arbitrary graded families of ideals by Cid Ruiz and Monta\~no, \cite{CM}.
We give an alternate definition of multiplicity and mixed multiplicity  for filtrations of ideals  in this paper, which more strictly generalizes the definition for $m_R$-primary ideals.

All local rings will be assumed to be Noetherian.
Let $R$ be a $d$-dimensional local ring with maximal ideal $m_R$.
It is shown in \cite[Theorem 1.1]{C1} and \cite[Theorem 4.2]{C3}
  that in a local ring $R$,  the limit
  \begin{equation}\label{I5}
  \lim_{n\rightarrow\infty}\frac{\ell_R(R/I_n)}{n^d}
\end{equation}
exists for all filtrations $\mathcal I=\{I_n\}$ of $m_R$-primary ideals if and only if $\dim N(\hat R)<\dim R$, where $N(\hat R)$ is the nilradical of $R$. In local rings $R$ which satisfy $\dim N(\hat R)<\dim R$, we may then define the multiplicity of a filtration  $\mathcal I$ of $m_R$-primary ideals by
$$
e_{R}(\mathcal I)= \lim_{n\rightarrow\infty}\frac{\ell_R(R/I_n)}{n^d/d!}.
$$

The problem of existence of such limits (\ref{I5}) has been considered by Ein, Lazarsfeld and Smith \cite{ELS} and Musta\c{t}\u{a} \cite{Mus}.
When the ring $R$ is a domain and is essentially of finite type over an algebraically closed field $k$ with $R/m_R=k$, Lazarsfeld and Musta\c{t}\u{a} \cite{LM} showed that
the limit exists for all $m_R$-filtrations.  In \cite{C2}, Cutkosky proved it in the complete generality  stated above. Lazarsfeld and Musta\c{t}\u{a}  use  in \cite{LM} the method of counting asymptotic vector space dimensions of graded families using ``Okounkov bodies''. This method, which is reminiscent of the geometric methods used by Minkowski in number theory, was developed by Okounkov \cite{Ok}, Kaveh and Khovanskii \cite{KK} and Lazarsfeld and   Musta\c{t}\u{a}   \cite{LM}. We also use this method. 
The fact that $\dim N(R)=d$ implies there exists a filtration without a limit was observed by Dao and Smirnov.

 It is shown in  \cite{CSS} 
  that if $R$ is a local ring such that $\dim N(\hat R)<d$ and 
  $$
  \mathcal I(1)=\{I(1)_m\},\ldots,\mathcal I(r)=\{I(r)_m\}
  $$
   are filtrations of $m_R$-primary ideals, then the limit  
 $$
 P(n_1,\ldots,n_r)=\lim_{m\rightarrow\infty}\frac{\ell_R(R/I(1)_{mn_1}\cdots I(r)_{mn_r})}{m^d/d!}
 $$
 is a homogeneous real polynomial $P(n_1,\ldots,n_r)$ of degree $d$ for $n_1,\ldots,n_r\in \NN$.
 We may thus define the mixed multiplicities $e_R(\mathcal I(1)^{[d_1]},\ldots,\mathcal I(r)^{[d_r]})$
 of these filtrations from the coefficients of this polynomial, by the expansion
  $$
  P(n_1,\ldots,n_r)=\sum_{d_1+\cdots+d_r=d}\frac{d!}{d_1!\cdots d_r!}e_R(\mathcal I(1)^{[d_1]},\ldots,\mathcal I(r)^{[d_r]})
  n_1^{d_1}\cdots n_r^{d_r}.
 $$
 In \cite{CSS}, multiplicities $e_R(\mathcal I;N)$ and $e_R(\mathcal I(1)^{[d_1]},\ldots,\mathcal I(r)^{[d_r]};N)$ are defined for a  finitely generated $R$-module $N$ and filtrations of $m_R$-primary ideals. 
 
 We now extend these definitions to arbitrary filtrations. Let $R$ be an analytically unramified local ring. 
 Let $\mfa$ be an $m_R$-primary ideal and $\mathcal I=\{I_n\}$ be a filtration of ideals on $R$. We have that $V(I_n)=V(I_1)$ 
 for all $n$ (Lemma \ref{dim}) where 
 $$
 V(I)=\{\mfp\in \mbox{Spec}(R)\mid I\subset \mfp\}
 $$
  and we define
 $$
 s(\mathcal I)=\dim R/I_1.
 $$
 For $s\in \NN$ and a finitely generated $R$-module $N$ such that $\dim N\le s$ (\cite[Section 2 of Chapter V]{Se} and \cite[Section 4.7]{BH}) 
 $$
 e_s(\mfa,N)=\left\{\begin{array}{ll}
 e_{\mfa}(N)&\mbox{ if }\dim N=s\\
 0 &\mbox{ if }\dim N<s.
 \end{array}\right.
 $$
 In Proposition \ref{Prop2}, we show that the limit
 $$
 \lim_{m\rightarrow\infty} \frac{e_s(\mfa,R/I_m)}{m^{d-s}/(d-s)!}
 $$
  exists for $s\ge s(\mathcal I)$ if $R$ is analytically unramified, and define the multiplicity $e_s(\mfa, \mathcal I)$ to be equal to this limit. We have an associativity formula (\ref{M10*}),
  $$
  e_s(\mfa,\mathcal I)=\sum_{\mfp}e_{R_{\mfp}}(\mathcal I_{\mfp})e_{\mfa}(R/\mfp),
  $$
  where $\mathcal I_{\mfp}=\{(I_n)_{\mfp}\}$ and  the sum is over all $\mfp\in \mbox{Spec}(R)$ such that $\dim R/\mfp=s$ and $\dim R_{\mfp}=d-s$.
 
The condition $R$ analytically unramified is used to ensure that the limits $e_{R_{\mfp}}(\mathcal I_{\mfp})$ exist  all for prime ideals  $\mfp$ in $R$. 

We then define mixed multiplicities of arbitrary filtrations $\mathcal I(1)=\{I(1)_m\},\ldots,\mathcal I(r)=\{I(r)_m\}$. We show in Theorem \ref{Theorem3} that for $s\ge \max\{s(\mathcal I(1)),\ldots,s(\mathcal I(r))\}$, the limit
$$
\lim_{m\rightarrow\infty}\frac{e_s(\mfa,R/I(1)_{mn_1}\cdots I(r)_{mn_r})}{m^{d-s}/(d-s)!}
$$
 is a homogeneous real polynomial $H_s(n_1,\ldots,n_r)$ of degree $d-s$  for $n_1,\ldots,n_r\in \NN$, which allows us to define  the mixed multiplicities 
 $$
 e_s(\mathcal I(1)^{[d_1]},\ldots,\mathcal I(r)^{[d_r]})
 $$
   from the coefficients of this polynomial in Definition \ref{Def4}. We have an associativity formula (\ref{M9}),
   $$
   e_s(\mfa,\mathcal I(1)^{[d_1]},\ldots,\mathcal I(r)^{[d_r]})
   =\sum_{\mfp}e_{R_{\mfp}}(\mathcal I(1)_{\mfp}^{[d_1]},\ldots,\mathcal I(r)_{\mfp}^{[d_r]})e_{\mfa}(R/\mfp)
   $$
   where the sum is over all $\mfp\in\mbox{Spec}(R)$ such that $\dim R/\mfp=s$ and $\dim R_{\mfp}=d-s$.

    We define in Proposition \ref{Prop2} and Definition \ref{Def4} the multiplicities $e_s(\mfa,\mathcal I;N)$  and mixed multiplicities $e_s(\mfa,\mathcal I(1),\ldots,\mathcal I(r);N)$  for arbitrary finitely generated $R$-modules $N$.

 In Section \ref{SecDiv}, we define divisorial filtrations and  $s$-divisorial filtrations. We prove the converse to the Rees Theorem \ref{Rees1*} for $s$-divisorial filtrations in Theorem \ref{Theorem9}. We prove the characterization of equality in the Minkowski inequality (\ref{MIEQ}) for $s$-divisorial filtrations in Theorem \ref{Theorem10}.
 
  In Section \ref{SecBound}, we define bounded filtrations and bounded $s$-filtrations. We prove the converse to the Rees Theorem \ref{Rees1*} for bounded $s$-filtrations in Theorem \ref{RBT}. We prove the characterization of equality in the Minkowski inequality (\ref{MIEQ}) for bounded $s$-filtrations in Theorem \ref{TRSKT}.
  
  In Theorems \ref{Theorem9}, \ref{Theorem10},  \ref{RBT} and \ref{TRSKT}, we have the assumption that $R$ is an excellent local domain. These theorems generalize the corresponding theorems for divisorial and bounded filtrations of $m_R$-primary ideal of \cite[Corollary 7.4]{C3}, \cite[Theorem 12.1]{C3}, \cite[Theorem 13.1]{C3} and Theorem \cite[Theorem 13.2]{C3}.
  
  In Examples \ref{Example1} and \ref{Example2} we show that Theorems \ref{RBT} and \ref{TRSKT} do not extend to arbitrary bounded or divisorial filtrations (when $s>0$ so that the filtrations do not consist of $m_R$-primary ideals).
  
 Let $R$ be a local ring and $\mathcal I=\{I_n\}$ be a filtration of  $R$. Define the graded $R$-algebra
  $$
  R[\mathcal I]=\sum_{m\ge 0}I_mt^m
  $$
  and let $\overline {R[\mathcal I]}$ be the integral closure of $R[\mathcal I]$ in the polynomial ring $R[t]$.

 In Section \ref{SecDiv}, we define divisorial filtrations. Suppose that $R$ is  a  local domain. Let $\nu$ be a divisorial valuation of the quotient field of $R$ which is nonnegative on $R$. We have the valuation ideals
 $$
I(\nu)_m=\{f\in R\mid \nu(f)\ge m\}
$$
for $m\in \NN$. The prime ideal $\mfp=I(\nu)_1$ is called the center of $\nu$ on $R$. We say that $\nu$ is an $s$-valuation if $\dim R/\mfp=s$.

A divisorial filtration of $R$ is a filtration $\mathcal I=\{I_m\}$ such that  there exist divisorial valuations $\nu_1,\ldots,\nu_r$ and $a_1,\ldots,a_r\in \RR_{\ge0}$ such that for all $m\in \NN$,
$$
I_m=I(\nu_1)_{\lceil ma_1\rceil }\cap\cdots\cap I(\nu_r)_{\lceil ma_r\rceil}.
$$
An $s$-divisorial filtration of $R$ is a filtration $\mathcal I=\{I_m\}$ such that  there exist $s$-valuations $\nu_1,\ldots,\nu_r$ and $a_1,\ldots,a_r\in \RR_{\ge0}$ such that for all $m\in \NN$,
$$
I_m=I(\nu_1)_{\lceil ma_1\rceil }\cap\cdots\cap I(\nu_r)_{\lceil ma_r\rceil}.
$$

 Observe that the trivial filtration $\mathcal I=\{I_m\}$, defined by $I_m=R$ for all $m$, is a degenerate case of a divisorial filtration and is a degenerate case of an $s$-divisorial filtration for all $s$. 
The  nontrivial $0$-divisorial filtrations are the  divisorial $m_R$-filtrations of \cite{C3}.

 We will often denote a divisorial filtration $\mathcal I$ on a local domain $R$ by $\mathcal I=\mathcal I(D)$. The motivation for this notation comes from the concept of a representation of a divisorial filtration on an excellent local domain, defined before Theorem \ref{Theorem9}.

In Section \ref{SecBound}, we define bounded filtrations and bounded $s$-filtrations. 
If $\mathcal I(D)$ is  a divisorial filtration, then $R[\mathcal I(D)]$ is integrally closed in $R[t]$; that is, $\overline{R[\mathcal I(D)]}=R[\mathcal I(D)]$.

A filtration $\mathcal I=\{I_n\}$ on $R$ is said to be a bounded filtration if there exists a divisorial filtration $\mathcal I(D)$ on $R$ such that
$\overline{R[\mathcal I]}=R[\mathcal I(D)]$.
A filtration $\mathcal I=\{I_n\}$ on $R$ is said to be a bounded $s$-filtration if there exists an $s$-divisorial filtration $\mathcal I(D)$ on $R$ such that
$\overline{R[\mathcal I]}=R[\mathcal I(D)]$.

If $I$ is an ideal, then the  $I$-adic filtration is bounded (Lemma \ref{BdLemma3}). Further, the filtration of integral closures of powers of an ideal $I$, and symbolic filtrations are bounded.

    In Section \ref{SecIneq}, we extend some classical inequalities for multiplicities and mixed multiplicities of $m_R$-primary ideals to filtrations, generalizing the inequalities of \cite{CSS} for filtrations of $m_R$-primary ideals to arbitrary filtrations. We also extend some other classical inequalities for multiplicities of ideals to filtrations.   We first prove the following theorem, which is proven in Theorem \ref{Rees1}. This theorem  is proven for $m_R$-primary filtrations in \cite[Theorem 6.9]{CSS}, \cite[Theorem 1.4]{C4}.

\begin{Theorem}\label{Rees1*}(Theorem \ref{Rees1})
	Let $R$   be an analytically unramified local ring of dimension $d$, $N$ be a finitely generated $R$-module, $\mathfrak a$ be an $m_R$-primary ideal  and  $\mathcal I=\{I_n\},$ $\mathcal J=\{J_n\}$ be filtrations of ideals of $R$ with $\mathcal J\subset \mathcal I.$ Suppose $R[\mathcal I]$ is integral over $R[\mathcal J]$. Then 
	\begin{itemize}
		\item[$(i)$] $s(\mathcal I)=s(\mathcal J),$
		\item[$(ii)$] $e_s(\mathfrak a,\mathcal I;N)=e_s(\mathfrak a,\mathcal J;N)$ for all  $s$ such that $s(\mathcal I)=s(\mathcal J)\leq s\leq d.$ 
	\end{itemize}
\end{Theorem}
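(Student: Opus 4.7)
The plan is to prove (i) by a direct radical calculation and then to reduce (ii) to the already--proven $m_R$-primary case of the theorem via the associativity formula.

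For (i), the inclusion $\mathcal J\subset \mathcal I$ gives $J_1\subset I_1$, so $\sqrt{J_1}\subset \sqrt{I_1}$ and $s(\mathcal I)\le s(\mathcal J)$. For the reverse inequality, fix $a\in I_1$. Since $at\in R[\mathcal I]$ and $R[\mathcal I]$ is integral over $R[\mathcal J]$, the element $at$ satisfies a monic equation
\[
(at)^k+p_1(t)(at)^{k-1}+\cdots+p_k(t)=0
\]
with $p_i(t)\in R[\mathcal J]$. Reading off the homogeneous component of $t$-degree $k$ produces a relation $a^k+c_1 a^{k-1}+\cdots+c_k=0$ with $c_i\in J_i$. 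Because $\mathcal J$ is a decreasing filtration, $J_i\subset J_1$ for every $i\ge 1$, so $a^k\in J_1$ and $a\in \sqrt{J_1}$. Thus $\sqrt{I_1}=\sqrt{J_1}$, and $s(\mathcal I)=s(\mathcal J)$.

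For (ii), I would apply the associativity formula (\ref{M10*}), in the module--valued form established in the body of the paper, to write
\[
e_s(\mfa,\mathcal I;N)=\sum_{\mfp}e_{R_{\mfp}}(\mathcal I_{\mfp};N_{\mfp})\,e_{\mfa}(R/\mfp)
\]
and likewise for $\mathcal J$, where $\mfp$ ranges over primes of $\Spec(R)$ with $\dim R/\mfp=s$ and $\dim R_{\mfp}=d-s$. Only primes containing $I_1$ contribute nontrivially, and by (i) $V(I_1)=V(J_1)$, so the two sums are indexed by the same finite set; on it, both $\mathcal I_{\mfp}$ and $\mathcal J_{\mfp}$ are filtrations of $m_{R_{\mfp}}$-primary ideals. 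Localization preserves the integrality of $R[\mathcal I]$ over $R[\mathcal J]$, and $R_{\mfp}$ inherits analytic unramifiedness from $R$. I can then invoke the $m_R$-primary case \cite[Theorem 6.9]{CSS}, \cite[Theorem 1.4]{C4} at each term to conclude that $e_{R_{\mfp}}(\mathcal I_{\mfp};N_{\mfp})=e_{R_{\mfp}}(\mathcal J_{\mfp};N_{\mfp})$, whence (ii).

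The main obstacle is bookkeeping rather than a deep new idea: one must first confirm the module--valued extension of the associativity formula (\ref{M10*}) from its scalar version stated in the introduction, and second verify that $R_{\mfp}$ remains analytically unramified for each relevant $\mfp$ so that the $m_R$-primary Rees theorem is legitimately available term by term. Both facts should be available either from results in the body of the paper or from standard references on analytic unramifiedness of localizations, after which the argument is exactly the two-step reduction described above.
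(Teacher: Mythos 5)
Your proposal is correct, and for part (ii) it is essentially the paper's argument: apply the associativity formula of Proposition \ref{Prop2}, observe that only primes over $I_1$ (equivalently $J_1$, since $V(I_1)=V(J_1)$) contribute, note that localization preserves integral dependence of the extended Rees algebras, use \cite[Proposition 9.1.4]{HS} to get that $R_{\mfp}$ is analytically unramified, and then invoke \cite[Theorem 6.9]{CSS} or \cite[Appendix]{C4} on each local term. Your worry about a scalar-to-module gap is unfounded — equation (\ref{M10*}) in Proposition \ref{Prop2} is already stated for modules $N$. One small polish: the sum in (\ref{M10*}) runs over $\mfp$ with $\dim R/\mfp=s$ \emph{and} $\dim R_{\mfp}=d-s$; if $s>s(\mathcal I)$ no contributing prime exists and both sides vanish, which your phrase ``only primes containing $I_1$ contribute nontrivially'' handles, but it is worth saying this case explicitly.

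For part (i), you take a genuinely different and more elementary route than the paper. The paper gets $V(I_1)=V(J_1)$ from Lemma \ref{LemmaIC3} (via the sandwich $R[\mathcal J]\subset R[\mathcal I]\subset\overline{R[\mathcal J]}$ and the localization Lemmas \ref{LemmaIC1}--\ref{LemmaIC2}). You instead argue directly: any $a\in I_1$ gives $at$ integral over the graded ring $R[\mathcal J]$, extracting the $t$-degree-$k$ homogeneous part of a monic dependence relation yields $a^k+c_1a^{k-1}+\cdots+c_k=0$ with $c_i\in J_i\subset J_1$, so $a^k\in J_1$ and $\sqrt{I_1}\subset\sqrt{J_1}$; the reverse containment is immediate from $\mathcal J\subset\mathcal I$. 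This avoids the integral-closure machinery entirely and is self-contained, which is a nice simplification; the paper's Lemma \ref{LemmaIC3} argument is the more natural one in the surrounding framework since those lemmas are needed elsewhere (e.g.\ Corollary \ref{Ap1}), but your argument is cleaner if one wants part (i) in isolation.
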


The converse of Theorem \ref{Rees1*} fails for arbitrary filtrations, as shown by a simple example of filtrations of $m_R$-primary ideals in \cite{CSS}. A famous theorem of Rees \cite{R} (also \cite[Theorem 11.3.1]{HS}) shows that if $R$ is a formally equidimensional local ring and $J\subset I$ are $m_R$-primary ideals, then the converse of Theorem \ref{Rees1*} does hold for the $J$-adic and $I$-adic filtrations $\mathcal J=\{J^n\}\subset \mathcal I=\{I^n\}$. In this situation, the Rees algebra of $I$, $\bigoplus_{n\geq 0}I^n$ is integral over the Rees algebra of $J$, $\bigoplus_{n\geq 0}J^n$, if and only if the integral closures of ideals  $\overline I=\overline J$ are equal, which is the condition of Rees's theorem. In Theorem \ref{RBT}, we prove that if $\mathcal J$ is a bounded $s$-filtration and $\mathcal I$ is an arbitrary filtration with $\mathcal J\subset \mathcal I$, then the converse of Theorem \ref{Rees1*} holds. 
This generalizes the theorem for bounded filtrations of $m_R$-primary ideals in \cite[Theorem 13.1]{C3}.

The Minkowski inequalities were formulated and proven for   $m_R$-primary ideals in reduced equicharacteristic zero local rings by Teissier \cite{T1}, \cite{T2} and proven in full generality for  $m_R$-primary ideals in   arbitrary local rings,  by Rees and Sharp \cite{RS}. The same inequalities hold for filtrations. They were proven for $m_R$-filtrations in local rings $R$ such that $\dim N(\hat R)<\dim R$ in \cite[Theorem 6.3]{CSS}. We prove them for arbitrary filtrations in an analytically unramified local ring in Theorem \ref{Mink1}. In the following theorem, we state the fundamental inequality from which all other inequalities follow, (i) of Theorem \ref{Mink1*}, and the most famous inequality (ii) (The Minkowski Inequality).

\begin{Theorem}\label{Mink1*} (Minkowski Inequalities)  Let $R$  be an analytically unramified  local ring of dimension $d,$   $N$ be a finitely generated $R$-module, $\mathfrak a$ be an $m_R$-primary ideal,      $\mathcal I=\{I_j\}$ and $\mathcal J=\{J_j\}$ be filtrations of ideals of $R.$ Let $\max\{s(\mathcal I), s(\mathcal J)\}\leq s< d$ and $k:=d-s$.
	\begin{itemize}
		\item[$(i)$] Let $k\geq 2.$ For $1\le i\le k-1,$ $$e_s(\mathfrak a,\mathcal I^{[i]},\mathcal J^{[k-i]};N)^2\le e_s(\mathfrak a,\mathcal I^{[i+1]},\mathcal J^{[k-i-1]};N)e_s(\mathfrak a, \mathcal I^{[i-1]},\mathcal J^{[k-i+1]};N),$$
		\item[$(ii)$]  $s(\mathcal I\mathcal J)=\max\{s(\mathcal I), s(\mathcal J)\}$ and $e_{s}(\mathfrak a,\mathcal I\mathcal J;N)^{\frac{1}{k}}\le e_{s}(\mathfrak a,\mathcal I;N)^{\frac{1}{k}}+e_{s}(\mathfrak a,\mathcal J;N)^{\frac{1}{k}}$, where $\mathcal I\mathcal J=\{I_jJ_j\}$.
	\end{itemize}
\end{Theorem}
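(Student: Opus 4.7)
The plan is to reduce to the case of $m_R$-primary filtrations, for which the Minkowski inequalities are proven in \cite[Theorem 6.3]{CSS}, via the associativity formula (\ref{M9}) and its module-theoretic extension. For each prime $\mathfrak{p}\in V(I_1)\cap V(J_1)$ with $\dim R/\mathfrak{p}=s$ and $\dim R_{\mathfrak{p}}=k$, the localizations $\mathcal{I}_{\mathfrak{p}}=\{(I_n)_{\mathfrak{p}}\}$ and $\mathcal{J}_{\mathfrak{p}}=\{(J_n)_{\mathfrak{p}}\}$ are filtrations of $\mathfrak{p}R_{\mathfrak{p}}$-primary ideals in the $k$-dimensional local ring $R_{\mathfrak{p}}$, and (as already used in the definition of $e_s(\mathfrak{a},\mathcal{I})$) analytic unramifiedness of $R$ guarantees the hypotheses of \cite{CSS} at each $R_{\mathfrak{p}}$. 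The associativity formula then reads
$$e_s(\mathfrak{a},\mathcal{I}^{[i]},\mathcal{J}^{[k-i]};N)=\sum_{\mathfrak{p}} c^{(i)}_{\mathfrak{p}},\qquad c^{(i)}_{\mathfrak{p}}:=e_{R_{\mathfrak{p}}}\bigl(\mathcal{I}_{\mathfrak{p}}^{[i]},\mathcal{J}_{\mathfrak{p}}^{[k-i]};N_{\mathfrak{p}}\bigr)\,e_{\mathfrak{a}}(R/\mathfrak{p}),$$
the sum being over all such $\mathfrak{p}$.

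For (i), applying \cite[Theorem 6.3]{CSS} inside each $R_{\mathfrak{p}}$ gives $(c^{(i)}_{\mathfrak{p}})^{2}\le c^{(i+1)}_{\mathfrak{p}}\,c^{(i-1)}_{\mathfrak{p}}$. Taking square roots, summing over $\mathfrak{p}$, and applying the discrete Cauchy--Schwarz inequality yields
$$\sum_{\mathfrak{p}} c^{(i)}_{\mathfrak{p}}\ \le\ \sum_{\mathfrak{p}}\sqrt{c^{(i+1)}_{\mathfrak{p}}}\sqrt{c^{(i-1)}_{\mathfrak{p}}}\ \le\ \Bigl(\sum_{\mathfrak{p}} c^{(i+1)}_{\mathfrak{p}}\Bigr)^{1/2}\Bigl(\sum_{\mathfrak{p}} c^{(i-1)}_{\mathfrak{p}}\Bigr)^{1/2},$$
and squaring is exactly (i).

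For (ii), the identity $s(\mathcal{I}\mathcal{J})=\max\{s(\mathcal{I}),s(\mathcal{J})\}$ is immediate from $V(I_1J_1)=V(I_1)\cup V(J_1)$. Setting $e_i:=e_s(\mathfrak{a},\mathcal{I}^{[i]},\mathcal{J}^{[k-i]};N)$, evaluating the mixed-multiplicity polynomial $H_s(n_1,n_2)$ at $(1,1)$ gives $e_s(\mathfrak{a},\mathcal{I}\mathcal{J};N)=\sum_{i=0}^{k}\binom{k}{i}e_i$. The log-concavity $e_i^2\le e_{i-1}e_{i+1}$ from (i) forces $e_i\le e_0^{1-i/k}e_k^{i/k}$ by the standard Teissier argument, so $e_s(\mathfrak{a},\mathcal{I}\mathcal{J};N)\le (e_0^{1/k}+e_k^{1/k})^{k}$ by the binomial theorem, and extracting $k$-th roots yields the Minkowski inequality.

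The main technical burden is establishing the module version of the associativity formula in the non-Noetherian filtration setting and confirming that the hypothesis $\dim N(\widehat{R_{\mathfrak{p}}})<\dim R_{\mathfrak{p}}$ required by \cite{CSS} genuinely passes to each localization $R_{\mathfrak{p}}$; once these are in place, both (i) and (ii) are formal consequences of Cauchy--Schwarz and of log-concavity applied termwise to the already-known $m_R$-primary case.
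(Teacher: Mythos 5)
Your proof is correct, and for part (ii) it takes a genuinely different route from the paper's. For part (i) you and the paper do essentially the same thing: localize at the relevant primes, invoke the Minkowski inequalities for $m_{R_{\mathfrak p}}$-primary filtrations from \cite[Theorem 6.3]{CSS} in each $R_{\mathfrak p}$ (analytic unramifiedness passes to localizations by \cite[Proposition 9.1.4]{HS}), and recombine via Cauchy--Schwarz. For part (ii), however, the paper applies the \emph{local} Minkowski inequality $e_{R_{\mathfrak p}}(\mathcal I_{\mathfrak p}\mathcal J_{\mathfrak p};N_{\mathfrak p})^{1/k}\le e_{R_{\mathfrak p}}(\mathcal I_{\mathfrak p};N_{\mathfrak p})^{1/k}+e_{R_{\mathfrak p}}(\mathcal J_{\mathfrak p};N_{\mathfrak p})^{1/k}$ at each prime and then combines these via the classical Minkowski inequality for finite sums (\cite[Section 2.12]{Ho}); you instead derive (ii) \emph{globally} from (i): evaluating $H_s(1,1)=\sum_{i}\binom{k}{i}e_i$, using the interior estimate $e_i\le e_0^{1-i/k}e_k^{i/k}$ (the paper's part (iii), which follows from (i) by the argument of \cite[Corollary 17.7.3]{HS}), and finishing with the binomial theorem. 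Your route is more economical in that it avoids the sum-Minkowski step and is closer to the classical Teissier/Rees--Sharp derivation; the paper's route keeps the reduction purely local at every stage. Two small remarks: the inequality $e_i^2\le e_{i-1}e_{i+1}$ is log-\emph{convexity}, not log-concavity; and the associativity sum in (\ref{M9}) runs over \emph{all} primes with $\dim R/\mathfrak p=s$ and $\dim R_{\mathfrak p}=d-s$, so for $i=0$ or $i=k$ one must include primes in $V(I_1)\cup V(J_1)$ rather than $V(I_1)\cap V(J_1)$ (the paper explicitly treats the primes in $A(\mathcal I)\setminus A(\mathcal J)$ and $A(\mathcal J)\setminus A(\mathcal I)$, where the mixed terms vanish by (\ref{M6})). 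Neither point affects the validity of your argument, since for $1\le i\le k-1$ only $V(I_1)\cap V(J_1)$ contributes, and the omitted terms are nonnegative.
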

   
There is a  characterization of when equality holds for $m_R$-primary ideals in the Minkowski Inequality by 
Teissier \cite{T3} (for Cohen-Macaulay normal two-dimensional complex analytic $R$), Rees and Sharp \cite{RS} (in dimension 2) and Katz \cite{Ka} (in complete generality).

They have shown that 
 if $R$ is a formally equidimensional  local ring and $I, J$  are $m_R$-primary ideals then the Minkowski inequality
 $$
 e_R(IJ)^{\frac{1}{d}}=e_R(I)^{\frac{1}{d}}+e_R(J)^{\frac{1}{d}}
 $$
 holds if and only if there exists $a,b\in \ZZ_{>0}$ such that the integral closures $\overline {I^a}=\overline{J^b}$ are equal.
 This condition is equivalent to the statement that the integral closures of the Rees algebras of $I^a$ and $J^b$ are equal;
 %
that is,  
there exist positive integers $a$ and $b$ such that 
\begin{equation}\label{Mink2**}
\overline{\sum_{n\ge 0}I^{an}t^{n}}=\overline{\sum_{n\ge 0}J^{bn}t^{n}}.
\end{equation}



%

  We show in Theorem \ref{TheoremN10} that if   $\mathcal I$ and $\mathcal J$ are filtrations on an analytically unramified local ring $R$ and there exist $a,b\in \ZZ_{>0}$ such that 
 the integral closures 
 of the $R$-algebras $\oplus_{n\ge 0}I_{an}t^n$ and $\oplus_{n\ge 0}J_{bn}t^n$ are  
 equal, then the Minkowski equality 
 \begin{equation}\label{MIEQ}
e_{s}(\mathfrak a,\mathcal I\mathcal J;N)^{\frac{1}{d-s}}= e_{s}(\mathfrak a,\mathcal I;N)^{\frac{1}{d-s}}+e_{s}(\mathfrak a,\mathcal J;N)^{\frac{1}{d-s}}
\end{equation}
holds. However,  
if  $\mathcal I$ and $\mathcal J$ are filtrations on an analytically unramified local ring $R$ such that  the Minkowski Equality 
(\ref{MIEQ}) holds, then in general, the integral closures of the $R$-algebras $\oplus_{n\ge 0}I_{an}t^n$ and $\oplus_{n\ge 0}J_{bn}t^n$ are not equal for all $a,b\in \ZZ_{>0}$, even for filtrations of $m_R$-primary ideals in a regular local ring (so that $s=0$), as is shown in a simple example in \cite{CSS}. 

 In Theorem \ref{TRSKT}, we show that if $\mathcal I(1)$ and $\mathcal I(2)$ are two nontrivial bounded $s$-filtrations in an excellent local domain $R$, then  the Minkowski Equality holds if and only if   there exist positive integers $a$ and $b$ such that there is equality of integral closures 
$$
\overline{\sum_{n\ge 0}I(1)_{an}t^n}= \overline{\sum_{n\ge 0}I(2)_{bn}t^n},
  $$
   giving a complete generalization of the Teissier, Rees and Sharp, Katz Theorem for bounded $s$-filtrations. This theorem was proven for bounded filtrations of $m_R$-primary ideals in \cite[Theorem 13.2]{C3}.

 In Lemma \ref{FiltIneq}, we prove that if $\mathcal J(i)\subset \mathcal I(i)$ are filtrations for $1\le i\le r$ on an analytically unramified local ring and $N$ is a finitely generated $R$-module, then we have inequalities of mixed multiplicities
  $$
 e_s(\mfa,\mathcal I(1)^{[d_1]},\ldots,\mathcal I(r)^{[d_r]};N)
 \le  e_s(\mfa,\mathcal J(1)^{[d_1]},\ldots,\mathcal J(r)^{[d_r]};N)
 $$
 for $s\ge\max\{s(\mathcal J(1)),\ldots,s(\mathcal J(r))\}$.
 
 In Proposition \ref{PropUS},  we generalize a formula on  multiplicity of specialization of ideals (\cite{HSV}, \cite[formula (2.1)]{Li}) to filtrations.

 In Theorem \ref{BoGen}, we extend a theorem of B\"oger (\cite{EB}, \cite[Corollary 11.3.2]{HS}) about equimultiple ideals in a formally equidimensional local ring to
a theorem about bounded $s$-filtrations in an excellent local domain. 

An ideal $I$ in a local ring $R$ is equimultiple if $\mbox{ht}(I)=\ell(I)$ where $\ell(I)$ is the analytic spread of $I$. In an excellent local domain, our theorem is strictly stronger that Bo\"ger's theorem, even for $I$-adic filtrations. We show in  Corollary \ref{CorAS31*} that the $I$-adic filtration of an equimultiple ideal is a bounded $s$-filtration where $s=\dim R-\mbox{ht}(I)$. 
In Example \ref{ExAS1}, we show that there are ideals $I$ whose $I$-adic filtration is a bounded $s$-filtration, but $I$ is not equimultiple.

   \section{Notation}

We will denote the nonnegative integers by $\NN$ and the positive integers by $\ZZ_{>0}$,   the set of nonnegative rational numbers  by $\QQ_{\ge 0}$  and the positive rational numbers by $\QQ_{>0}$. 
We will denote the set of nonnegative real numbers by $\RR_{\ge0}$ and the positive real numbers by $\RR_{>0}$. For a real number $x$, $\lceil x\rceil$ will denote the smallest integer that is $\ge x$ and $\lfloor x\rfloor$ will denote the largest integer that is $\le x$. If $E_1,\ldots, E_r$ are prime divisors on a normal scheme $X$ and $a_1,\ldots,a_r\in \RR$, then $\lfloor \sum a_iE_i\rfloor$ denotes the integral divisor $\sum \lfloor a_i\rfloor E_i$ and $\lceil \sum a_iE_i\rceil$ denotes the integral divisor $\sum \lceil a_i\rceil E_i$.

A local ring is assumed to be Noetherian.
The maximal ideal of a local ring $R$ will be denoted by $m_R$. The quotient field of a domain $R$ will be denoted by ${\rm QF}(R)$. We will denote the length of an $R$-module $M$ by $\ell_R(M)$. Excellent local rings have many excellent properties which are enumerated in \cite[Scholie IV.7.8.3]{EGA}. We will make use of some of these properties without further reference.

\section{Filtrations}

A filtration $\mathcal I=\{I_n\}_{n\in\NN}$ of ideals on a ring $R$ is a descending chain
$$
R=I_0\supset I_1\supset I_2\supset \cdots
$$
of ideals such that $I_iI_j\subset I_{i+j}$ for all $i,j\in \NN$. A filtration $\mathcal I=\{I_n\}$ of ideals on a local ring $(R,\mR)$ is a filtration of $R$ by $\mR$-primary ideals if $I_n$ is $m_R$-primary for $n\ge 1$.
A filtration $\mathcal I=\{I_n\}_{n\in\NN}$ of ideals on a ring $R$ is called a Noetherian filtration if $\bigoplus_{n\ge 0}I_n$ is a finitely generated $R$-algebra.

If $I\subset R$ is an ideal, then $V(I)=\{\mfp\in \mbox{Spec}(R)\mid I\subset \mfp\}$. If $\mathcal I=\{I_n\}$ and $\mathcal J=\{J_n\}$ are filtrations of $R$, then we will write $\mathcal I\subset\mathcal J$ if $I_n\subset J_n$ for all $n$.

For any filtration  $\mathcal I=\{I_n\}$ and $\mfp\in \Spec R,$  let $\mathcal I_\mfp$ denote the filtration $\mathcal I_\mfp=\{I_{n}R_\mfp\}.$

 Let $R$ be a local ring and $\mathcal I=\{I_n\}$ be a filtration of  $R$. As defined in the introduction,  the graded $R$-algebra
  $$
  R[\mathcal I]=\sum_{m\ge 0}I_mt^m
  $$
  and  $\overline {R[\mathcal I]}$ is the integral closure of $R[\mathcal I]$ in the polynomial ring $R[t]$.

	\begin{Lemma}\label{dim}
		Let $R$ be a local ring and $\mathcal I=\{I_n\}$ be a filtration of ideals of $R.$  The following hold.
		\begin{itemize}
			\item[$(i)$] For all $n\geq 1,$ $V(I_1)=V(I_n)$ and $\dim R/I_1=\dim R/I_n.$ 
			\item[$(ii)$] If $\mathcal J=\{J_n\}$ is a filtration of ideals of $R$ such that $\mathcal J\subset \mathcal I$ and the $R$-algebra $R{[\mathcal I]}$ is a finitely generated $R{[\mathcal J]}$-module then $V(I_n)=V(J_m)$ for all $n,m\geq 1.$
		\end{itemize}	
	\end{Lemma}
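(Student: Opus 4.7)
The plan is to prove (i) by sandwiching each $I_n$ between $I_1$ and a power of $I_1$, and to reduce (ii) to a comparison of $V(I_1)$ with $V(J_1)$ via (i), closing the argument by a degree-counting computation on homogeneous module generators.

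For (i), I would first observe that $I_n \subseteq I_1$ holds by definition of a filtration, giving $V(I_1) \subseteq V(I_n)$. For the reverse, the multiplicativity axiom $I_iI_j \subseteq I_{i+j}$ implies by induction that $I_1^n \subseteq I_n$, so any prime containing $I_n$ contains $I_1$, yielding $V(I_n) \subseteq V(I_1)$. Since $\dim R/J$ depends only on the minimal primes over $J$, equivalently on $V(J)$, the dimension equality follows at once.

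For (ii), part (i) immediately gives $V(I_n) = V(I_1)$ and $V(J_m) = V(J_1)$, so the claim reduces to $V(I_1) = V(J_1)$. Since $\mathcal J \subseteq \mathcal I$ gives $V(I_1) \subseteq V(J_1)$ for free, the real work is the reverse inclusion. Here I would exploit the hypothesis that $R[\mathcal I]$ is a finitely generated $R[\mathcal J]$-module by choosing homogeneous generators (possible because $R[\mathcal J] \hookrightarrow R[\mathcal I]$ is a graded inclusion of $\NN$-graded rings): pick $u_1 t^{d_1}, \ldots, u_k t^{d_k}$ with $u_i \in I_{d_i}$, and set $N = \max_i d_i$. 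For any $m > N$ and $g \in I_m$, extracting the degree-$m$ component of an expansion of $g t^m$ in these generators gives $g t^m = \sum_i (h_i t^{m-d_i})(u_i t^{d_i})$ with $h_i \in J_{m - d_i}$, and $m - d_i \geq 1$ forces $h_i \in J_1$. Hence $g = \sum_i h_i u_i \in J_1$, so $I_m \subseteq J_1$ for all sufficiently large $m$. Consequently any prime containing $J_1$ contains some such $I_m$, and by (i) therefore contains $I_1$, giving $V(J_1) \subseteq V(I_1)$.

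The only real technical point is the degree-matching step that produces $g \in J_1$, and once homogeneous module generators are in hand this is essentially bookkeeping; there is no serious obstacle beyond this.
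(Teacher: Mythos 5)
Your proposal is correct and takes essentially the same approach as the paper: part (i) via $I_1^n\subseteq I_n\subseteq I_1$, and part (ii) via homogeneous module generators and degree bookkeeping to sandwich the $I_n$ between powers of the $J_m$. The only cosmetic difference is that the paper derives the cleaner inclusion $J_n\subseteq I_n\subseteq J_{n-d}$ for $n\geq d+1$, whereas you settle for $I_m\subseteq J_1$ for $m>N$; both suffice and rest on the identical extraction-of-homogeneous-components argument.
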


\begin{proof}
	$(i)$ Since $\{I_n\}$ is a filtration, for all integers $n>m\geq 1,$ we have $I_m^n\subset I_n\subset I_m.$ Therefore  $V(I_1)=V(I_n)$ for all  $n\geq 1.$ Hence $\min \Ass R/I_1=\min \Ass R/I_n$ for all  $n\geq 1.$ Thus  for all  $n\geq 1,$   $\dim R/I_1=\dim R/I_n.$ 
	\\$(ii)$ Let $\{\alpha_1,\ldots,\alpha_r\}$ be a generating set of the $R$-algebra $R{[\mathcal I]}$ as an $R{[\mathcal J]}$-module. Suppose $\deg\alpha_i=d_i$ for all $i=1,\ldots,r$ and $d=\max\{d_1,\ldots,d_r\}.$ Then for all $n\geq d+1,$ we have $J_n\subset I_n\subset J_{n-d}.$ Since by $(1),$ $V(J_m)=V(J_1)$ for all $m\geq 1,$ we have $V(I_n)=V(J_n).$ Again by using $(1),$ we get $V(I_n)=V(J_m)$ for all $n,m\geq 1.$
\end{proof}

\begin{Definition} Let  $R$ be a  local ring and $\mathcal I=\{I_n\}$ be a filtration of ideals of $R$.  We define the {\it{dimension of the filtration $\mathcal I$}} to be  $s(\mathcal I)=\dim R/I_n$ (for any $n\geq 1$).
\end{Definition}

 The dimension $s(\mathcal I)$ 	 is well-defined by Lemma \ref{dim} $(i)$. In the case of the trivial filtration $\mathcal I=\{I_n\}$, where $I_n=R$ for all $n$, we have that $s(\mathcal I)=-1$.

\begin{Definition}
 Let  $R$  be a $d$ dimensional  local ring and $\mathcal I=\{I_n\}$ be a filtration of ideals of $R$.
For $s\in \NN$, we define 
$$
A({\mathcal I})=\min \Ass R/I_1\bigcap \{\mfp\in\Spec R: \dim R/\mfp=s\},
$$
the set of minimal primes $\mathfrak p$ of $I_1$ such that $\dim R/\mfp=s$.

\end{Definition}

\begin{Example}\label{Example3}The embedded associated primes that appear in the ideals in a filtration $\mathcal I=\{I_n\}$ may be infinite in number.
\end{Example}

 A simple example is as follows. Let $k$ be an infinite field, and let   $\{\alpha_i\}_{i\in \ZZ_{>0}}$ be a countable set of distinct elements of $k$. Let $R=k[x,y,z]_{(x,y,z)}$ be the localization of a polynomial ring over $k$ in three variables. Let $I_n=z^{n+1}(z,x-\alpha_ny)$ for $n\in\ZZ_{>0}$. $\mathcal I=\{I_n\}$ is thus a filtration. The associated primes of 
$I_n=(z^{n+1})\cap(z^{n+2},x-\alpha_ny)$ are $(z)$ and $(z,x-\alpha_ny)$. This is in contrast to the fact that the associated primes of the filtration of powers of an ideal $I$ in a  local ring is a finite set \cite{Br}.

\begin{Lemma}\label{LemmaIC1} Let $R$ be a local ring  and suppose that $\mathcal I=\{I_n\}$ is a filtration of $R$. Then the following are equivalent.
\begin{enumerate}
\item[1)] $R[t]$ is integral over $\sum_{n\ge 0}I_nt^n$. 
\item[2)] $I_1=R$. 
\item[3)] $\mathcal I$ is the trivial filtration.
\end{enumerate}
\end{Lemma}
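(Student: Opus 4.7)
The plan is to prove the cycle of implications $(3) \Rightarrow (2) \Rightarrow (3)$ and $(3) \Rightarrow (1) \Rightarrow (2)$, where the only nontrivial step is $(1) \Rightarrow (2)$.

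The implications $(3) \Rightarrow (2)$ and $(3) \Rightarrow (1)$ are immediate: if $I_n = R$ for all $n$, then $I_1 = R$, and $\sum_{n\ge 0} I_n t^n = R[t]$, which is trivially integral over itself. For $(2) \Rightarrow (3)$, I would use the filtration relation $I_1 \cdot I_{n-1} \subset I_n$ together with $I_1 = R$ to get $I_{n-1} \subset I_n$, combined with the descending inclusion $I_n \subset I_{n-1}$; an immediate induction from $I_0 = R$ then gives $I_n = R$ for all $n$.

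For the key implication $(1) \Rightarrow (2)$, the plan is to exploit the fact that $A := \sum_{n \ge 0} I_n t^n$ is a graded $R$-subalgebra of $R[t]$ (graded by $t$-degree). Since $t \in R[t]$ is integral over $A$, there exist $a_0, \ldots, a_{n-1} \in A$ with
\begin{equation*}
t^n + a_{n-1} t^{n-1} + \cdots + a_1 t + a_0 = 0
\end{equation*}
in $R[t]$. Decomposing each $a_i$ into its homogeneous components $a_i^{(j)} \in I_j t^j$ and then extracting the degree-$n$ homogeneous part of the relation, I obtain
\begin{equation*}
t^n + \sum_{i=0}^{n-1} a_i^{(n-i)} t^i = 0,
\end{equation*}
where $a_i^{(n-i)} = b_i t^{n-i}$ for some $b_i \in I_{n-i}$. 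Comparing coefficients of $t^n$ yields $1 + \sum_{i=0}^{n-1} b_i = 0$. Since $I_{n-i} \subset I_1$ whenever $n - i \ge 1$, each $b_i$ lies in $I_1$, and hence $1 \in I_1$, so $I_1 = R$.

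There is no real obstacle here; the only subtle point is recognizing that integrality over the graded ring $A$ forces an identity holding in a single homogeneous degree, which is what converts the integral equation into the membership $1 \in I_1$.
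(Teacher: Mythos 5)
Your proposal is correct and the key implication $(1)\Rightarrow(2)$ follows the same route as the paper: take an integral dependence relation for $t$ over the graded subring $\sum_{n\ge 0}I_nt^n$, extract its degree-$n$ homogeneous component to force the coefficients to lie in $I_{n-i}\subset I_1$, and conclude $1\in I_1$. The paper states this more tersely (writing the homogeneous relation directly) and leaves the easy implications $(2)\Leftrightarrow(3)\Rightarrow(1)$ unstated, but your explicit homogeneous decomposition is exactly the justification behind the paper's one-line argument.
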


\begin{proof} Suppose that $t$ is integral over $\sum_{n\ge 0}I_nt^n$. Then there exist $n\in \ZZ_{>0}$ and $a_i\in I_i$ for $0\le i\le n$ such that $t^n+(a_1t)t^{n-1}+\cdots+(a_nt^n)=0$. Thus $1\in (a_1,\ldots,a_n)\subset I_1$.
\end{proof}

Suppose that $R$ is a local ring and $\mathcal I=\{I_n\}$ is a filtration of $R$. The integral closure $\overline{\sum_{n\ge 0}I_nt^n}$ of $\sum_{n\ge 0}I_nt^n$ in $R[t]$ is a graded $R$-algebra 
$\overline{\sum_{n\ge 0}I_nt^n}=\sum_{n\ge 0}K_nt^n$,  where $\mathcal K=\{K_n\}$ is a filtration of $R$ (by \cite[Theorem 2.3.2]{HS}).

If $I$ is an ideal in a local ring $R$, let $\overline I$ denote its integral closure. 

\begin{Lemma}\label{BdLemma1} Let $R$ be a local ring and $\mathcal I=\{I_n\}$ be a filtration. Then
$$
\overline{R[\mathcal I]}=\sum_{m\ge 0}J_mt^m
$$
where $\{J_m\}$ is the filtration
$$
J_m=\{f\in R\mid f^r\in \overline{I_{rm}}\mbox{ for some }r>0\}.
$$
\end{Lemma}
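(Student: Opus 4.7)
The plan is to identify, for each $m$, the degree-$m$ piece of $\overline{R[\mathcal I]}$ with $J_m$. By \cite[Theorem 2.3.2]{HS}, already invoked in the excerpt, the integral closure $\overline{R[\mathcal I]}$ inside $R[t]$ is automatically a graded $R$-subalgebra, so I can write $\overline{R[\mathcal I]}=\sum_{m\ge 0}K_m t^m$ with $\{K_m\}$ a filtration; the lemma then amounts to the assertion $K_m=J_m$.

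For the inclusion $J_m\subset K_m$, I would start with an element $f$ satisfying $f^r\in\overline{I_{rm}}$, pick a monic integral equation $(f^r)^k+c_1(f^r)^{k-1}+\cdots+c_k=0$ with $c_j\in I_{rm}^j$, multiply through by $t^{krm}$, and use the filtration containment $I_{rm}^j\subset I_{jrm}$ to recognize each $c_j t^{jrm}$ as an element of $R[\mathcal I]$. The result is a monic integral equation exhibiting $(ft^m)^r=f^rt^{rm}$ as integral over $R[\mathcal I]$. Since $\overline{R[\mathcal I]}$ is integrally closed in $R[t]$, the containment $(ft^m)^r\in\overline{R[\mathcal I]}$ forces $ft^m\in\overline{R[\mathcal I]}$, i.e.\ $f\in K_m$.

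For the reverse inclusion $K_m\subset J_m$, take $f\in K_m$ and use the graded version of \cite[Theorem 2.3.2]{HS} to pick a homogeneous monic integral equation for $ft^m$ over $R[\mathcal I]$. Matching the degree-$mk$ component and cancelling the nonzerodivisor $t^{mk}$ reduces it to
\[
f^k+a_1 f^{k-1}+\cdots+a_k=0,\qquad a_j\in I_{mj}.
\]
To promote this to $f^r\in\overline{I_{rm}}$, I would apply the valuative criterion for integral closure of ideals together with the minimal-primes characterization (as in \cite[Proposition 1.1.5]{HS}): fix a minimal prime $\mfp$ of $R$ and any valuation $v$ of ${\rm QF}(R/\mfp)$ that is nonnegative on $R/\mfp$. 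The displayed equation, read modulo $\mfp$, forces $kv(f)\ge v(a_{j_0})+(k-j_0)v(f)$ for some $j_0\in\{1,\ldots,k\}$, whence $v(f)\ge v(I_{mj_0})/j_0$. Setting $r=k!$ so that $r/j_0\in\ZZ_{>0}$ for every possible $j_0$, and combining $v(I^s)=sv(I)$ in a domain with the filtration inclusion $I_{mj_0}^{r/j_0}\subset I_{rm}$, I obtain
\[
rv(f)\ge (r/j_0)\,v(I_{mj_0})=v(I_{mj_0}^{r/j_0})\ge v(I_{rm}).
\]
Since this holds for every such $v$ and every minimal prime, the valuative criterion delivers $f^r\in\overline{I_{rm}}$, hence $f\in J_m$.

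The real content of the argument sits in this second inclusion. The main obstacle is that the coefficients $a_j$ satisfy only $a_j\in I_{mj}$, which is strictly weaker than the $a_j\in I_m^j$ one would have for an $I$-adic filtration; consequently $f$ itself need not lie in $\overline{I_m}$, and a valuation-dependent index $j_0$ inevitably appears. Passing to the common exponent $r=k!$ (or $\operatorname{lcm}(1,\ldots,k)$) is precisely the device that makes the conclusion uniform across all valuations and all minimal primes of $R$.
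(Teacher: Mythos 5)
Your proof is correct. The paper itself gives no proof here, simply deferring to the proof of the $m_R$-primary case in \cite[Lemma 5.5]{C3}, and your argument is precisely the natural adaptation of that argument: show $J_m\subset K_m$ by clearing denominators in a monic equation for $f^r$ over $I_{rm}$, using $I_{rm}^j\subset I_{jrm}$, and show $K_m\subset J_m$ by extracting the homogeneous degree-$mk$ part of an integral equation for $ft^m$ to get $f^k+a_1f^{k-1}+\cdots+a_k=0$ with $a_j\in I_{mj}$, then applying the valuative criterion for integral closure over each minimal prime. You have correctly identified the one genuinely delicate point — that the index $j_0$ realizing the valuation inequality depends on the valuation, so $r$ must be chosen uniformly (e.g.\ $r=k!$ or $\operatorname{lcm}(1,\ldots,k)$) before one can conclude $f^r\in\overline{I_{rm}}$ — and handled it properly.

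One small remark on exposition: once you invoke the graded version of \cite[Theorem 2.3.2]{HS} to obtain a \emph{homogeneous} monic equation for $ft^m$, there is no ``matching of the degree-$mk$ component'' left to do; every term already has degree $mk$, and one simply cancels $t^{mk}$. Alternatively, start from an arbitrary monic equation and take its degree-$mk$ part. Either phrasing is fine, but as written the sentence conflates the two. This is purely cosmetic and does not affect correctness.
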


The proof of  Lemma \ref{BdLemma1} for $m_R$-filtrations in \cite[Lemma 5.5]{C3} extends immediately to arbitrary divisorial filtrations. 

\begin{Remark}\label{Rem1} If $\mathcal I=\{I^n\}$ is the adic-filtration of the powers of a fixed ideal $I$, then $J_n=\overline{I^n}$ for all $n$.
\end{Remark}

\begin{Lemma}\label{LemmaIC2} Suppose that $R$ is a local ring, $\mathcal I=\{I_n\}$ is a filtration of $R$ and $p\in \mbox{Spec}(R)$. Let $\overline{R[\mathcal I]}=\oplus_{n\ge 0}K_n$. Then the integral closure $\overline{\sum_{n\ge 0}I_nR_pt^n}$ of $\sum_{n\ge 0}I_nR_pt^n$ in $R_p[t]$ is $\sum_{n\ge 0}K_nR_pt^n$.
\end{Lemma}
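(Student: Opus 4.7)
The plan is to reduce the statement to the well-known fact that integral closure commutes with localization at a multiplicative set, applied to the graded extension $R[\mathcal{I}] \subset R[t]$.

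First, I would set $A = R[\mathcal{I}] = \sum_{n \ge 0} I_n t^n$ and $B = R[t]$, and take $S = R \setminus p$, viewed as a multiplicative subset of $R \subset A$. Since the elements of $S$ are concentrated in degree zero, straightforward inspection of graded pieces gives
\[
S^{-1}A = \sum_{n \ge 0} I_n R_p\, t^n, \qquad S^{-1}B = R_p[t], \qquad S^{-1}\!\left(\sum_{n\ge 0} K_n t^n\right) = \sum_{n \ge 0} K_n R_p\, t^n.
\]
Next I would invoke the standard result (for instance \cite[Proposition 2.1.16]{HS}) that if $A \subset B$ is a ring extension and $S \subset A$ is multiplicative, then the integral closure of $S^{-1}A$ in $S^{-1}B$ equals $S^{-1}$ of the integral closure of $A$ in $B$. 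Applied to our data, this immediately yields
\[
\overline{\sum_{n\ge 0} I_n R_p\, t^n}^{R_p[t]} = S^{-1}\overline{R[\mathcal{I}]}^{R[t]} = \sum_{n\ge 0} K_n R_p\, t^n,
\]
which is precisely the claim.

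For completeness I would note why both inclusions hold without invoking a black box: $(\supseteq)$ Every $k \in K_n$ satisfies an integral relation over $R[\mathcal{I}]$ in $R[t]$; clearing denominators and localizing, $k t^n$ remains integral over $\sum I_n R_p t^n$ in $R_p[t]$. $(\subseteq)$ If $\alpha \in R_p[t]$ is integral over $\sum I_n R_p t^n$, the coefficients of an integral equation lie in $\sum I_n R_p t^n$; clearing finitely many denominators by some $s \in S$, $s\alpha$ becomes integral over $R[\mathcal{I}]$ inside $R[t]$, so $s\alpha \in \overline{R[\mathcal{I}]} = \sum K_n t^n$, whence $\alpha \in \sum K_n R_p t^n$. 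Since the integral closure of a graded subring in a graded ring is graded, one can also argue degree by degree using the description of $K_m$ from Lemma \ref{BdLemma1}.

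There is essentially no hard step here; the only point requiring minor care is the graded bookkeeping, namely that $S$ sits in degree zero so that localization preserves the grading and one can identify $S^{-1}A$ and $S^{-1}\overline{A}^B$ summand by summand. Once this is observed, the lemma is a direct consequence of the compatibility of integral closure with localization.
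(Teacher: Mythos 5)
Your proof is correct. The paper states this lemma without proof, evidently treating it as a routine consequence of the fact that integral closure commutes with localization, which is exactly the reduction you carry out. Your choice of $S = R\setminus p$ sitting in degree zero of $A = R[\mathcal I]$ is the right one: it makes $S^{-1}A = \sum_n I_nR_p\,t^n$, $S^{-1}R[t] = R_p[t]$, and $S^{-1}\overline{A} = \sum_n K_nR_p\,t^n$ transparently, after which the cited compatibility of integral closure with localization (see, e.g., \cite[Proposition 2.1.16 or Section 1.1]{HS}) gives the claim. The explicit two-inclusion argument you append for completeness is sound; the clearing-of-denominators step in $(\subseteq)$ is the only place requiring care, and you handle it correctly by choosing one $s\in S$ that simultaneously clears the finitely many denominators appearing in $\alpha$ and in the coefficients of its integral equation, so that $s\alpha$ is integral over $R[\mathcal I]$ inside $R[t]$ and hence lies in $\sum_n K_nt^n$.
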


\begin{Lemma}\label{LemmaIC3} Let $R$ be a local ring,  $\mathcal I=\{I_n\}$ be a filtration of $R$ and $\overline{R[\mathcal I]}=\oplus_{n\ge 0}K_n$. Let $\mathcal K=\{K_n\}$. Then
\begin{enumerate}
\item[1)] $V(I_1)=V(K_1)$.
\item[2)] $s(\mathcal I)=s(\mathcal  K)$.
\end{enumerate}
\end{Lemma}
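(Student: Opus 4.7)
The plan is to prove (1) by showing the two inclusions $V(K_1)\subset V(I_1)$ and $V(I_1)\subset V(K_1)$ separately, and then derive (2) as an immediate corollary of (1) together with Lemma \ref{dim}$(i)$.

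For the inclusion $V(K_1)\subset V(I_1)$, I would note that since $R[\mathcal I]\subset \overline{R[\mathcal I]}=R[\mathcal K]$, comparing degree-one pieces gives $I_1\subset K_1$, whence any prime containing $K_1$ contains $I_1$.

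For the reverse inclusion $V(I_1)\subset V(K_1)$, the key step is to use the explicit description of $K_n$ provided by Lemma \ref{BdLemma1}: namely $K_1=\{f\in R\mid f^r\in \overline{I_r}\text{ for some }r>0\}$. If $f\in K_1$, then $f^r\in\overline{I_r}\subset\sqrt{I_r}$ for some $r\ge 1$, so $f\in\sqrt{I_r}$. But by Lemma \ref{dim}$(i)$, $V(I_r)=V(I_1)$, so $\sqrt{I_r}=\sqrt{I_1}$. This yields $K_1\subset\sqrt{I_1}$, which gives $V(I_1)\subset V(K_1)$. Combined with the first inclusion, we conclude $V(I_1)=V(K_1)$.

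For (2), by definition $s(\mathcal I)=\dim R/I_1$ and $s(\mathcal K)=\dim R/K_1$. Since $V(I_1)=V(K_1)$, the sets of minimal primes coincide, so $\dim R/I_1=\dim R/K_1$, proving $s(\mathcal I)=s(\mathcal K)$. No step is particularly hard — the main observation is the use of Lemma \ref{BdLemma1} to pass from an element of $K_1$ to a power lying in some $I_r$, after which Lemma \ref{dim}$(i)$ immediately reduces things to $I_1$.
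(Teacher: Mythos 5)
Your proof is correct, but it takes a genuinely different route from the paper's. The paper argues prime by prime: for each $\mfp\in\Spec(R)$ it localizes, invokes Lemma \ref{LemmaIC2} to identify $\overline{\sum_{n\ge 0}I_nR_{\mfp}t^n}$ with $\sum_{n\ge 0}K_nR_{\mfp}t^n$, and then uses the trichotomy of Lemma \ref{LemmaIC1} (triviality of a filtration $\Leftrightarrow$ its Rees algebra has integral closure equal to $R_{\mfp}[t]$) to conclude that $\mfp\notin V(I_1)$ iff $\mfp\notin V(K_1)$. You instead work globally with the explicit description of $K_1$ from Lemma \ref{BdLemma1}, obtaining the sandwich $I_1\subset K_1\subset\sqrt{I_1}$ (via $\overline{I_r}\subset\sqrt{I_r}$ and $\sqrt{I_r}=\sqrt{I_1}$ from Lemma \ref{dim}$(i)$), which immediately forces $V(I_1)=V(K_1)$. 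Your argument is more elementary and avoids localization entirely, at the cost of leaning on Lemma \ref{BdLemma1}, whose proof the paper only cites from elsewhere; the paper's argument leans instead on Lemma \ref{LemmaIC2}, also stated without proof, so neither route is more self-contained than the other. Both are short and valid, and your deduction of $(2)$ from $(1)$ matches the standard fact that $\dim R/I$ depends only on $V(I)$.
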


\begin{proof} By Lemmas \ref{LemmaIC1} and \ref{LemmaIC2}, $p\not\in V(I_1)$ if and only if $\overline{\sum_{n\ge 0}I_nR_pt^n}
=R_p[t]$ which holds if and only if $\sum_{n\ge 0}K_nR_pt^n=R_p[t]$, and this last condition holds if and only if $p\not\in V(K_1)$.
\end{proof}

  \section{Multiplicities of filtrations}    
    
 	Let $\mfa$ be an $\mR$-primary ideal of $R$ and $N$ be a finitely generated $R$-module with $\dim N=r.$ Define $$e_{\mfa}(N)=\lim_{k\to\infty}\frac{l_R(N/\mfa ^kN)}{k^r/r!}.$$ If $s\geq r=\dim N,$ define (\cite[V.2]{Se}, \cite[4.7]{BH})
\[ e_s({\mfa},N)= \left\{
\begin{array}{l l}
e_{\mfa}(N) & \quad \text{if $\dim N=s$ }\\
0 & \quad \text{if $\dim N<s.$ }
\end{array} \right.\] 
 
\begin{Example} The function $e_{\mfa}(N)$ of an $m_R$-primary ideal $\mfa$ does not extend to a function   $e_{\mathcal A}(N)$ of a filtration $\mathcal A=\{a_n\}$ of $m_R$-primary ideals on finitely generated $R$-modules $N$, even on a regular local ring. 
\end{Example}

The existence of such an example follows from \cite[Example 5.3]{C1}. Let $k$ be a field and $R$ be the $d$ dimensional power series ring over $k$, $R=k[[x_1,\ldots x_{d-1},y]]$. In \cite[Example 5.3]{C1}, a
 filtration $\mathcal A=\{a_n\}$ of $m_R$-primary ideals is constructed such that if $\mfp$ is the prime ideal $\mfp=(y)$ of $R$, then the limit
 $$
 \lim_{k\rightarrow\infty}\frac{ \ell_R((R/\mfp^m)/a_k(R/\mfp^m))}{k^{d-1}/(d-1)!}
 $$
 does not exist for any $m\ge 2$. In the example, a function $\sigma:\ZZ_+\rightarrow \QQ_+$ is constructed such that letting
 $N_n=(x_1,\ldots,x_{d-1})^n$, and defining $a_n=(N_n,yN_{n-\sigma(n)},y^2)$, we have that $\mathcal A=\{a_n\}$ is a filtration of $m_R$-primary ideals on $R$ for which the above limits do not exist.

Let $R$ be an analytically unramified local ring of dimension $d$, $N$ be a finitely generated $R$-module  and $\mathcal I=\{I_n\}$ is a filtration of $m_R$-primary ideals.  Then the multiplicity of $\mathcal I$ is defined by
$$
e_R(\mathcal I,N):=\lim_{m\rightarrow\infty}\frac{\ell_R(N/I_mN)}{m^d/d!}.
$$
This limit exists by \cite[Theorem 1.1]{C1} and \cite[Theorem 6.6]{CSS}. We further define the multiplicity of the trivial filtration $\mathcal I=\{I_n\}$, where $I_n=R$ for all $n$, to be $e_R(\mathcal I,N)=0$. We write $e_R(\mathcal I)=e_R(\mathcal I,R)$.

Suppose that $\mathcal I$ is a filtration of $R$ and that $s\ge s(\mathcal I)$. Let $N$ be a finitely generated $R$-module. 
Suppose that $\mathfrak p$ is a prime ideal of $R$ such that $\dim R/\mathfrak p=s$. Then $\dim R_{\mathfrak p}\le d-s$ with equality if $R$ is equidimensional and universally catenary. The universal catenary condition holds, for instance, if $R$ is regular or $R$ is excellent. Suppose that $\mathfrak p\in\mbox{Spec}(R)$ satisfies $\dim R/\mathfrak p=s$. Define the filtration $\mathcal I_{\mathfrak p}=\{I_nR_{\mathfrak p}\}$. Then
we have that
\begin{equation}\label{M11}
e_{R_p}(\mathcal I_{\mathfrak p},N_{\mfp})=\lim_{n\rightarrow\infty}\frac{\ell_{N_{\mathfrak p}}(R_{\mathfrak p}/I_nN_{\mathfrak p})}{n^{{\rm dim}\, R_{\mathfrak p}}/\dim R_{\mathfrak p}!}
\end{equation}
exists.

We will frequently make use of the fact  that if $R$ is a  local ring which is analytically unramified and $\mathfrak p\in \mbox{Spec}(R)$ is a prime ideal, then $R_{\mathfrak p}$ is analytically unramified (\cite[Proposition 9.1.4]{HS}).

\begin{Proposition}\label{Prop2} Suppose that $R$ is an analytically unramified local ring, $N$ is a finitely generated $R$-module and $\mathfrak a$ is an $m_R$-primary ideal  and $\mathcal I$ is a filtration on $R$. Suppose that $s\in \NN$ is such that $s(\mathcal I)\le s\le d$. Then 
the limit
$$
e_s(\mathfrak a,\mathcal I;N):=\lim_{m\rightarrow\infty}\frac{e_s(\mathfrak a,N/I_mN)}{m^{d-s}/(d-s)!}
$$
exists. Further, 
\begin{equation}\label{M10*}
e_s(\mathfrak a,\mathcal I;N)=\sum_{\mathfrak p}e_{R_{\mathfrak p}}(\mathcal I_{\mathfrak p},N_{\mfp})e_{\mathfrak a}(R/\mathfrak p)
\end{equation}
where the sum is over all $\mathfrak p\in {\Spec}(R)$ such that $\dim R/\mathfrak p=s$ and $\dim R_{\mathfrak p}=d-s$.
\end{Proposition}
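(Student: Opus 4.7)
The plan is to apply the classical associativity formula for the generalized multiplicity symbol $e_s(\mathfrak a,-)$ to the module $N/I_mN$, and then pass to the limit termwise in the resulting finite sum. Since $V(I_n)=V(I_1)$ for every $n\ge 1$ by Lemma~\ref{dim}(i), we have $\dim N/I_mN\le s(\mathcal I)\le s$ for all $m$, and the classical associativity formula (\cite[Ch.~V.2]{Se}, \cite[\S4.7]{BH}) applied to $\mathfrak a$ on $N/I_mN$ gives
$$
e_s(\mathfrak a,N/I_mN)=\sum_{\mathfrak p\in\mathcal P}\ell_{R_{\mathfrak p}}(N_{\mathfrak p}/I_mN_{\mathfrak p})\,e_{\mathfrak a}(R/\mathfrak p),
$$
where the sum runs over the set $\mathcal P$ of primes $\mathfrak p$ with $\dim R/\mathfrak p=s$ lying in the support of $N/I_mN$. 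Because that support equals $V(I_1+\mathrm{Ann}_R N)$ and is thus independent of $m$, so is $\mathcal P$; it is a finite set consisting of primes minimal over $I_1+\mathrm{Ann}_R N$ of coheight $s$, and primes $\mathfrak p$ not containing $I_1$ contribute nothing since then $I_nR_{\mathfrak p}=R_{\mathfrak p}$ for all $n$.

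Next I would analyze each summand as $m\to\infty$. Fix $\mathfrak p\in\mathcal P$. Since $R$ is analytically unramified, so is its localization $R_{\mathfrak p}$ by \cite[Proposition~9.1.4]{HS}, and by the choice of $\mathcal P$ the ideals $I_nR_{\mathfrak p}$ have finite colength in $N_{\mathfrak p}$. The multiplicity existence theorem for $m$-primary filtrations on finitely generated modules over analytically unramified local rings (\cite[Theorem~1.1]{C1}, \cite[Theorem~6.6]{CSS}, after if necessary reducing modulo $\mathrm{Ann}_{R_{\mathfrak p}}(N_{\mathfrak p})$) therefore yields
$$
\lim_{m\to\infty}\frac{\ell_{R_{\mathfrak p}}(N_{\mathfrak p}/I_mN_{\mathfrak p})}{m^{\dim R_{\mathfrak p}}/(\dim R_{\mathfrak p})!}=e_{R_{\mathfrak p}}(\mathcal I_{\mathfrak p},N_{\mathfrak p}).
$$
Since $\dim R_{\mathfrak p}+\dim R/\mathfrak p\le d$, one has $\dim R_{\mathfrak p}\le d-s$. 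After dividing the associativity identity by $m^{d-s}/(d-s)!$, the contribution of $\mathfrak p$ tends to $0$ when $\dim R_{\mathfrak p}<d-s$ and to $e_{R_{\mathfrak p}}(\mathcal I_{\mathfrak p},N_{\mathfrak p})\,e_{\mathfrak a}(R/\mathfrak p)$ when $\dim R_{\mathfrak p}=d-s$. Summing across the finite set $\mathcal P$ simultaneously proves the existence of $e_s(\mathfrak a,\mathcal I;N)$ and yields formula~(\ref{M10*}).

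The step requiring the most care is ensuring the existence theorem for multiplicities of $m_R$-primary filtrations on finitely generated modules applies at each $\mathfrak p\in\mathcal P$: when $\mathcal I_{\mathfrak p}$ is itself $m_{R_{\mathfrak p}}$-primary (i.e.\ $\mathfrak p$ is minimal over $I_1$), \cite[Theorem~6.6]{CSS} applies directly, but when $\mathfrak p$ is only minimal over $I_1+\mathrm{Ann}_R N$ one must work in the quotient by $\mathrm{Ann}_{R_{\mathfrak p}}(N_{\mathfrak p})$ and verify that enough of the analytically unramified hypothesis survives for the module-level form of the limit theorem to apply. All remaining steps are purely formal: the associativity formula and the division by $m^{d-s}$ each commute with the finite sum $\sum_{\mathfrak p\in\mathcal P}$, so once existence is established prime by prime, both claims of the proposition follow at once.
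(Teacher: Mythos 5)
Your argument is essentially the paper's: apply the associativity formula for $e_s(\mathfrak a,-)$ to $N/I_mN$, pass to the limit termwise over the resulting finite set of coheight-$s$ primes (independent of $m$), invoke the existence theorem for filtrations of $m_{R_{\mathfrak p}}$-primary ideals over the analytically unramified $R_{\mathfrak p}$, and observe that terms with $\dim R_{\mathfrak p}<d-s$ vanish after dividing by $m^{d-s}/(d-s)!$. The only cosmetic difference is that the paper indexes its sum by the set $A(\mathcal I)$ of minimal primes of $I_1$ of coheight $s$ (which does not reference $N$), arguing separately that primes outside this set contribute zero length, while you sum over primes of coheight $s$ in $\mathrm{Supp}(N/I_mN)$; these are interchangeable bookkeeping choices.

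The concern you flag in your final paragraph is in fact vacuous. If $\mathfrak p$ lies in $\mathrm{Supp}(N/I_mN)$ with $\dim R/\mathfrak p=s$, then $\mathfrak p\supset I_1$, and any minimal prime $\mathfrak q$ of $I_1$ with $\mathfrak q\subset\mathfrak p$ satisfies $s=\dim R/\mathfrak p\le\dim R/\mathfrak q\le\dim R/I_1=s(\mathcal I)\le s$, forcing $\mathfrak q=\mathfrak p$. So every prime in your sum is already minimal over $I_1$, hence $I_nR_{\mathfrak p}$ is $\mathfrak pR_{\mathfrak p}$-primary for all $n\ge 1$, and \cite[Theorem~6.6]{CSS} applies directly to the module $N_{\mathfrak p}$ over $R_{\mathfrak p}$; there is no case requiring a passage to the quotient by $\mathrm{Ann}_{R_{\mathfrak p}}(N_{\mathfrak p})$.
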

\begin{proof}
	Let $\mfp$ be a prime ideal of $R$ such that $\dim R/\mfp=s.$ If $\mfp\notin A(\mathcal I)$  
	 then  $\ell_{R_{\mfp}}(N_{\mfp}/I_nN_{\mfp})=0.$ If $\mfp\in A({\mathcal I})$ 
	  then for all $n\geq 1,$ $I_nR_{\mfp}$ are $\mfp R_{\mfp}$-primary ideals of $R_{\mfp}.$ Hence for all  $\mfp\in A({\mathcal I}),$ 
	   the limit  $$\lim\limits_{m\to\infty}\frac{\ell_{R_{\mfp}}(N_\mfp/I_mN_\mfp)}{m^{\dim R_\mfp}/\dim R_\mfp !}$$ exists by (\ref{M11}) and since $R_{\mathfrak p}$ is analytically unramified.
	Since $A({\mathcal I})$ 
	 is a finite set, using \cite[Corollary 4.7.8]{BH}, we get that the limit
	\begin{eqnarray*}
		\lim\limits_{m\to\infty}\frac{e_s(\mathfrak a,N/I_mN)}{m^{d-s}/(d-s)!}&=& \lim\limits_{m\to\infty}\frac{(d-s)!}{m^{d-s}}\sum\limits_{\substack{\mfp\in\Spec R,\\ \dim R/\mfp=s}}{\ell_{R_{\mfp}}(N_\mfp/I_mN_\mfp)}e_{\mathfrak a}(R/\mfp)\\&=& \sum\limits_{\substack{\mfp\in\Spec R,\\ \dim R/\mfp=s}}\Big(\lim\limits_{m\to\infty}\frac{\ell_{R_{\mfp}}(N_\mfp/I_mN_\mfp)}{m^{d-s}/(d-s)!}\Big)e_{\mathfrak a}(R/\mfp)\\&=& \sum\limits_{\substack{\mfp\in\Spec R,\\ \dim R/\mfp=s\mbox{ and } \dim R_{\mathfrak p}=d-s}} e_{R_{\mathfrak p}}(\mathcal I_{\mathfrak p},N_{\mfp})e_{\mathfrak a}(R/\mathfrak p)
	\end{eqnarray*} exists.
\end{proof}

If $\mathcal I$ is a filtration of $m_R$-primary ideals, then $s(\mathcal I)=0$ and
$$
e_0(\mathfrak a, \mathcal I;N)=e_R(\mathcal I;N)e_{\mathfrak a}(R/m_R)=e_R(\mathcal I;N).
$$
 We will write $e_s(\mfa,\mathcal I)=e_s(\mfa,\mathcal I;R)$.	

\subsection{Mixed multiplicities of filtrations} Let $R$ be an analytically unramified local ring of dimension $d$ and $N$ be a finitely generated $R$-module. 
Suppose that
 $\mathcal I(1)=\{I(1)_i\},\ldots,\mathcal I(r)=\{I(r)_i\}$ are filtrations of $m_R$-primary ideals. It is shown in \cite[Theorem  6.6]{CSS} that the function
\begin{equation}\label{M2}
P(n_1,\ldots,n_r)=\lim_{m\rightarrow \infty}\frac{\ell_R(N/I(1)_{mn_1}\cdots I(r)_{mn_r}N)}{m^d/d!}
\end{equation}
is  a homogeneous polynomial  of total degree $d$ with real coefficients for all  $n_1,\ldots,n_r\in\NN$.   The mixed multiplicities $e_R(\mathcal I(1)^{[d_1]},\ldots,\mathcal I(r)^{[d_r]};N)$ of $N$ of type $(d_1,\ldots,d_r)$ with respect to the filtrations $\mathcal I(1),\ldots,\mathcal I(r)$ are defined 
 from the coefficients of $P$, generalizing the definition of mixed multiplicities for $m_R$-primary ideals. Specifically,   
 we write 
\begin{equation}\label{M3}
P(n_1,\ldots,n_r)=\sum_{d_1+\cdots +d_r=d}\frac{d!}{d_1!\cdots d_r!}e_R(\mathcal I(1)^{[d_1]},\ldots, \mathcal I(r)^{[d_r]};N)n_1^{d_1}\cdots n_r^{d_r}.
\end{equation}
We have need of  the formulas (\ref{M2}) and (\ref{M3}) in  the slightly more general case that   $\mathcal I(1)=\{I(1)_i\},\ldots,\mathcal I(r)=\{I(r)_i\}$ are filtrations of $R$
such that for each $j$, $\mathcal I(j)$ is either a filtration of $m_R$-primary ideals or $\mathcal I(j)$ is the trivial filtration $I(j)_n=R$ for all $n\in R$. In this case, there exists $\sigma\in \NN$ with $0\le \sigma\le r$ such that either $\sigma=0$ 
and $\mathcal I(j)$ are filtrations of $m_R$-primary ideals for all $j$
or $\sigma>0$ and there exist $1\le i_1<\cdots <i_{\sigma}\le r$ such that $\mathcal I(j)$ is a filtration of $m_R$-primary ideals if $j\in \{i_1,\ldots,i_{\sigma}\}$ and $\mathcal I(j)$ is the trivial filtration $I(j)_n=R$ for all $n\in \NN$ if $j\not\in \{i_1,\ldots,i_{\sigma}\}$. As in (\ref{M2}), we define
\begin{equation}\label{M4}
P(n_1,\ldots,n_r)=\lim_{m\rightarrow \infty}\frac{\ell_R(N/I(1)_{mn_1}\cdots I(r)_{mn_r}N)}{m^d/d!}
\end{equation}
In this case, we have that 
$$
P(n_1,\ldots,n_r)=P(0,\ldots,0,n_{i_1},0,\ldots,0,n_{i_2},0,\ldots,0,n_{i_{\sigma}},0,\ldots,0)
$$
which is a homogeneous polynomial of degree $d$ by (\ref{M2}) and (\ref{M3}). In this case, we also define the mixed multiplicities $e_R(\mathcal I(1)^{[d_1]},\ldots,\mathcal I(r)^{[d_r]};N)$ of $N$ of type $(d_1,\ldots,d_r)$ with respect to the filtrations $\mathcal I(1),\ldots,\mathcal I(r)$ 
 from the coefficients of $P$, so that $P$ 
has an expansion
\begin{equation}\label{M5}
P(n_1,\ldots,n_r)=\sum_{d_1+\cdots +d_r=d}\frac{d!}{d_1!\cdots d_r!}e_R(\mathcal I(1)^{[d_1]},\ldots, \mathcal I(r)^{[d_r]};N)n_1^{d_1}\cdots n_r^{d_r}.
\end{equation}
We have that $P(n_1,\ldots,n_r)$ is a homogeneous polynomial of degree $d$ in the variables $n_{i_1},\ldots,n_{i_\sigma}$. 
Thus 
\begin{equation}\label{M6}
e_R(\mathcal I(1)^{[d_1]},\ldots, \mathcal I(r)^{[d_r]};N)=0 \mbox{ if some $d_j>0$ with $j\not\in \{i_1,\ldots,i_{\sigma}\}$.}
\end{equation}

We will write $e_R(\mathcal I(1)^{[d_1]},\ldots, \mathcal I(r)^{[d_r]})=e_R(\mathcal I(1)^{[d_1]},\ldots, \mathcal I(r)^{[d_r]};R)$.

 \begin{Theorem}\label{Theorem3} Suppose that $R$ is an analytically unramifed local ring of dimension $d$, $N$ is a finitely generated $R$-module,
 $\mathfrak a$ is an $m_R$-primary ideal   and  that
 $\mathcal I(1)=\{I(1)_i\},\ldots,\mathcal I(r)=\{I(r)_i\}$ are filtrations of  ideals.  Suppose that $s\in \NN$ is such that  $\max\{s(\mathcal I(1)),\ldots,s(\mathcal I(r)\}\le s\le d$ and $n_1,\ldots,n_r\in \NN$.
  Then for $n_1,\ldots,n_r\in \NN$,
   \begin{equation}\label{M7}
 H_s(n_1,\ldots,n_r)=\lim_{m\rightarrow\infty}\frac{e_s(\mathfrak a,N/I(1)_{mn_1}\cdots I(r)_{mn_r}N)}{m^{d-s}/(d-s)!}
 \end{equation}
is a homogeneous polynomial of total degree $d-s$. 
\end{Theorem}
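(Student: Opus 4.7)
The strategy is to apply the associativity formula \cite[Corollary 4.7.8]{BH} for $e_s(\mathfrak{a},-)$ to pass to localizations, where the mixed multiplicity polynomial for $m_R$-primary filtrations (allowing trivial factors) recalled in (\ref{M4})--(\ref{M5}) already applies. Fix $(n_1,\ldots,n_r)\in\NN^r$ and set $J(m)=I(1)_{mn_1}\cdots I(r)_{mn_r}$. The case $(n_1,\ldots,n_r)=(0,\ldots,0)$ is trivial, so assume some $n_j>0$. Since $\dim N/J(m)N\le s(\mathcal I(j))\le s$ for that $j$, the associativity formula gives
\begin{equation*}
e_s(\mfa,N/J(m)N)=\sum_{\substack{\mfp\in\Spec(R)\\ \dim R/\mfp=s}}\ell_{R_\mfp}(N_\mfp/J(m)N_\mfp)\,e_{\mfa}(R/\mfp).
\end{equation*}

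A prime $\mfp$ of dimension $s$ contributes only if $\mfp\supset I(j)_1$ for some $j$ with $n_j>0$; combined with $s\ge s(\mathcal I(j))=\dim R/I(j)_1$, such $\mfp$ is forced to be minimal over $I(j)_1$, hence lies in the finite set $A(\mathcal I(j))$. In particular the sum is finite, and on each such $R_\mfp$ (which is analytically unramified by \cite[Proposition 9.1.4]{HS}), every filtration $\mathcal I(j)_\mfp$ is either a filtration of $\mfp R_\mfp$-primary ideals or the trivial filtration, according to whether $\mfp\supset I(j)_1$ or not.

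I split each contributing term by $\dim R_\mfp$. When $\dim R_\mfp=d-s$, the mixed primary/trivial case of \cite[Theorem 6.6]{CSS} recalled in (\ref{M4})--(\ref{M5}), applied in $R_\mfp$, produces a homogeneous polynomial
\begin{equation*}
P_\mfp(n_1,\ldots,n_r)=\lim_{m\to\infty}\frac{\ell_{R_\mfp}(N_\mfp/J(m)N_\mfp)}{m^{d-s}/(d-s)!}
\end{equation*}
of degree $d-s$. When $\dim R_\mfp<d-s$, choosing $k_j\ge 1$ with $(\mfp R_\mfp)^{k_j}\subset I(j)_1R_\mfp$ for each $j$ with $\mfp\supset I(j)_1$ and using $I(j)_{mn_j}\supset I(j)_1^{mn_j}$ shows $(\mfp R_\mfp)^{mK}\subset J(m)R_\mfp$ for some $K=K(n_1,\ldots,n_r)$ independent of $m$. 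The standard Hilbert--Samuel bound then gives
\begin{equation*}
\ell_{R_\mfp}(N_\mfp/J(m)N_\mfp)\le \ell_{R_\mfp}(N_\mfp/(\mfp R_\mfp)^{mK}N_\mfp)=O(m^{\dim R_\mfp})=o(m^{d-s}),
\end{equation*}
and the corresponding limit vanishes. Summing over the finite set of contributing primes yields
\begin{equation*}
H_s(n_1,\ldots,n_r)=\sum_{\substack{\dim R/\mfp=s\\ \dim R_\mfp=d-s}}P_\mfp(n_1,\ldots,n_r)\,e_{\mfa}(R/\mfp),
\end{equation*}
a finite sum of homogeneous polynomials of degree $d-s$.

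The main obstacle is the case $\dim R_\mfp<d-s$, which can occur because $R$ is not assumed equidimensional or universally catenary. Controlling such ``defect'' primes with the Hilbert--Samuel bound above is the essential technical step, and rests on the dichotomy (primary or trivial) for $\mathcal I(j)_\mfp$ established from $s(\mathcal I(j))\le s$ forcing minimality of $\mfp$ over $I(j)_1$. Once this is in place, polynomiality of $H_s$ reduces to the already-known mixed multiplicity theorem in each maximal-dimensional localization $R_\mfp$, and the proof simultaneously yields the associativity formula (\ref{M9}).
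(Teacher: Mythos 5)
Your proof is correct and follows essentially the same route as the paper's: the paper invokes Proposition \ref{Prop2} (which encapsulates the associativity formula, the finiteness of the contributing primes via $A(\mathcal I)$, and the vanishing of the terms from primes with $\dim R_\mfp<d-s$) and then applies (\ref{M4})--(\ref{M6}) in each localization, exactly as you do. Your write-up merely unpacks Proposition \ref{Prop2} inline, making the Hilbert--Samuel $O(m^{\dim R_\mfp})$ bound for the defect primes explicit rather than tacit.
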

\begin{proof} Define
	$A=\bigcup\limits_{j=1}^rA({\mathcal I(j)}).$ 
	Then $A$ is a finite set.

For all $(n_1,\ldots,n_r)\in \mathbb N^r,$  consider the filtrations $\mathcal J(n_1,\ldots,n_r)=\{J(n_1,\ldots,n_r)_m=I(1)_{mn_1}\cdots I(r)_{mn_r}\}$ of ideals of $R.$  Note that 
	$V(J(n_1,\ldots,n_r)_m)=V(J(n_1,\ldots,n_r)_{m+1})$ for all $n_1,\ldots,n_r\in\mathbb N$ and $m\geq 1.$ Let $\mfp$ be a prime ideal of $R$ such that $\dim R/\mfp=s.$ If $\mfp\notin A$ then for all $m\geq 1,$ $\ell_{R_{\mfp}}(R_{\mfp}/J(n_1,\ldots,n_r)_mR_{\mfp})=0.$ 
	If $\mfp\in A$ then for all $m\geq 1,$ $J(n_1,\ldots,n_r)_mR_{\mfp}=I(1)_{mn_1}\cdots I(r)_{mn_r}R_{\mfp}$ are either $R_\mfp$ or $\mfp R_{\mfp}$-primary ideals of $R_{\mfp}.$ 
	Therefore by Proposition \ref{Prop2}, the limit
\begin{equation}\label{M10}
\begin{array}{lll}
\lim\limits_{m\to\infty}\frac{e_s(\mathfrak a,N/I(1)_{mn_1}\cdots I(r)_{mn_r}N)}{m^{d-s}/(d-s)!}&=& 
e_s(\mathcal J(n_1,\ldots,n_r);N)\\
&=&\sum_{\mathfrak p}e_{R_{\mathfrak p}}(\mathcal J(n_1,\ldots,n_r)_{\mathfrak p},N_{\mfp})e_{\mathfrak a}(R/\mathfrak p).
\end{array}
\end{equation}
where the sum is over $\mathfrak p\in \mbox{Spec}(R)$ such that $\dim R/\mathfrak p=s$ and $\dim R_{\mathfrak p}=d-s$.
 By (\ref{M4}), (\ref{M5}) and (\ref{M6}), we have for each $\mathfrak p$ that 
\begin{equation}\label{M12}
e_{R_{\mathfrak p}}(\mathcal J(n_1,\ldots,n_r)_{\mathfrak p},N_{\mfp})
=\sum_{d_1+\cdots+d_r=d-s}\frac{(d-s)!}{d_1!\cdots d_r!}e_{R_{\mathfrak p}}(\mathcal I(1)_{\mathfrak p}^{[d_1]},\ldots,\mathcal I(r)_{\mathfrak p}^{[d_r]};N_{\mfp})n_1^{d_1}\cdots n_r^{d_r}
\end{equation}
is a (possibly zero) homogeneous polynomial of degree $d-s$ in $n_1,\ldots,n_r$.
Thus the function (\ref{M7}) is a homogeneous polynomial in $n_1,\ldots,n_r$ of  total degree  $d-s$.
\end{proof}

\begin{Definition}\label{Def4}
With the assumptions of Theorem \ref{Theorem3}, the mixed multiplicities 
$$
e_s(\mathfrak a,\mathcal I(1)^{[d_1]},\ldots,I(r)^{[d_r]};N)
$$
 are defined from the expansion
\begin{equation}\label{M8}
H_s(n_1,\ldots,n_r)=
\sum_{d_1+\cdots +d_r=d-s}\frac{(d-s)!}{d_1!\cdots d_r!}e_s(\mathfrak a,\mathcal I(1)^{[d_1]},\ldots, \mathcal I(r)^{[d_r]};N)n_1^{d_1}\cdots n_r^{d_r}.
\end{equation}
 \end{Definition}

 \begin{Theorem}\label{Theorem5}
Let assumptions be as in Theorem \ref{Theorem3}.
Then we have the formula
\begin{equation}\label{M9}
e_s(\mathfrak a,\mathcal I(1)^{[d_1]},\ldots, \mathcal I(r)^{[d_r]};N)
=\sum_{\mathfrak p}e_{R_{\mathfrak p}}((\mathcal I(1)_{\mathfrak p})^{[d_1]},\ldots,(\mathcal I(r)_{\mathfrak p})^{[d_r]};N_{\mfp})e_{\mathfrak a}(R/\mathfrak p)
\end{equation}
where the sum is over all $\mathfrak p\in {\Spec}(R)$ such that $\dim R/\mathfrak p=s$ and $\dim R_{\mathfrak p}=d-s$.
\end{Theorem}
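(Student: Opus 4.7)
The plan is to show that Theorem \ref{Theorem5} follows essentially by matching coefficients, combining equations (\ref{M10}) and (\ref{M12}) already obtained inside the proof of Theorem \ref{Theorem3} with the defining expansion (\ref{M8}) of Definition \ref{Def4}. No new limit arguments are needed; all existence questions were settled in Proposition \ref{Prop2} and Theorem \ref{Theorem3}.

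First I would recall from the proof of Theorem \ref{Theorem3} that, setting $\mathcal J(n_1,\ldots,n_r)_m=I(1)_{mn_1}\cdots I(r)_{mn_r}$, one has by (\ref{M10})
$$
H_s(n_1,\ldots,n_r)=\sum_{\mathfrak p}e_{R_{\mathfrak p}}(\mathcal J(n_1,\ldots,n_r)_{\mathfrak p},N_{\mfp})\,e_{\mathfrak a}(R/\mathfrak p),
$$
the sum running over $\mathfrak p\in\Spec(R)$ with $\dim R/\mathfrak p=s$ and $\dim R_{\mathfrak p}=d-s$. For each such $\mathfrak p$, the localized filtrations $\mathcal I(j)_{\mathfrak p}$ are either trivial or consist of $\mathfrak pR_{\mathfrak p}$-primary ideals, so that (\ref{M4})--(\ref{M6}) apply over $R_{\mathfrak p}$ and give (\ref{M12}):
$$
e_{R_{\mathfrak p}}(\mathcal J(n_1,\ldots,n_r)_{\mathfrak p},N_{\mfp})
=\sum_{d_1+\cdots+d_r=d-s}\frac{(d-s)!}{d_1!\cdots d_r!}\,e_{R_{\mathfrak p}}(\mathcal I(1)_{\mathfrak p}^{[d_1]},\ldots,\mathcal I(r)_{\mathfrak p}^{[d_r]};N_{\mfp})\,n_1^{d_1}\cdots n_r^{d_r}.
$$

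Substituting this expression into the sum over $\mathfrak p$ and interchanging the (finite) order of summation produces
$$
H_s(n_1,\ldots,n_r)=\sum_{d_1+\cdots+d_r=d-s}\frac{(d-s)!}{d_1!\cdots d_r!}\Biggl(\sum_{\mathfrak p}e_{R_{\mathfrak p}}(\mathcal I(1)_{\mathfrak p}^{[d_1]},\ldots,\mathcal I(r)_{\mathfrak p}^{[d_r]};N_{\mfp})\,e_{\mathfrak a}(R/\mathfrak p)\Biggr)n_1^{d_1}\cdots n_r^{d_r}.
$$
Comparing this with the defining expansion (\ref{M8}) of Definition \ref{Def4} and using uniqueness of coefficients of a homogeneous polynomial in $n_1,\ldots,n_r$ yields the asserted formula (\ref{M9}).

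There is essentially no obstacle: the only point worth a sentence of justification is the uniqueness of the coefficients, which is standard (a homogeneous polynomial in $\RR[n_1,\ldots,n_r]$ is determined by its values on $\NN^r$, since $\NN^r$ is Zariski dense). The finiteness of $A=\bigcup_j A(\mathcal I(j))$ was already invoked in the proof of Theorem \ref{Theorem3} and guarantees that every sum over $\mathfrak p$ appearing above is finite, so that summation and the taking of the limit defining $e_s$ commute without further argument.
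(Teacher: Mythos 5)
Your proof is correct and takes exactly the same approach as the paper: the paper's own proof simply says ``The theorem follows from equations (\ref{M10}) and (\ref{M12}),'' and your argument is precisely the spelled-out version of that—substitute (\ref{M12}) into (\ref{M10}), swap the finite sums, and compare coefficients against (\ref{M8}). Your added remarks on coefficient uniqueness and the finiteness of the sum over $\mathfrak p$ are both sound and simply make explicit what the paper leaves tacit.
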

\begin{proof}
The theorem follows from equations (\ref{M10}) and (\ref{M12}).
	\end{proof}
	
	If $\mathcal I(1),\ldots,\mathcal I(r)$ are filtrations of $m_R$-primary ideals (or are trivial filtrations) then
	$$
	e_0(\mathfrak a,\mathcal I(1)^{[d_1]},\ldots,\mathcal I(r)^{[d_r]};N)=e_R(\mathcal I(1)^{[d_1]},\ldots,\mathcal I(r)^{[d_r]};N)e_{\mathfrak a}(R/m_R).
	$$
We will write $e_s(\mathfrak a,\mathcal I(1)^{[d_1]},\ldots,\mathcal I(r)^{[d_r]})=e_s(\mathfrak a,\mathcal I(1)^{[d_1]},\ldots,\mathcal I(r)^{[d_r]};R)$.
	
	We have that
$$e_s(\mathfrak a,\mathcal I(1)^{[0]},\ldots,\mathcal I(i-1)^{[0]},\mathcal I(i)^{[d-s]},\mathcal I(i+1)^{[0]},\ldots\mathcal I(r)^{[0]}, N)=e_s(\mathfrak a,\mathcal I(i),N)$$ for all $1\leq i\leq r.$

\section{Inequalities}\label{SecIneq}

Suppose that $R$ is a local ring and $I\subset J$ are $m_R$-primary ideals. Rees showed in \cite{R} that if $R[J]$ is integral over $R[I]$ then the multiplicities $e_I(R)=e_J(R)$ are equal and he proved the converse, that $e_I(R)=e_J(R)$ implies $R[J]$ is integral over $R[I]$ if $R$ is formally equidimensional. We show in \cite[Theorem 6.9]{CSS} and \cite[Appendix]{C3} that the first statement extends to arbitrary filtrations of $m_R$-primary ideals, in a local ring such that $\dim N(\hat R)<\dim R$. However, the converse does not hold for $m_R$-primary ideals, even if $R$ is a regular local ring. A simple example is given in \cite{CSS}. 
We extend this theorem to arbitrary filtrations in the following theorem. 

	\begin{Theorem}\label{Rees1}
		Let $R$ be an analytically unramified local ring of dimension $d$, $N$ be a finitely generated $R$-module, $\mathfrak a$ be an $m_R$-primary ideal  and  $\mathcal I=\{I_n\},$ $\mathcal J=\{J_n\}$ be filtrations of ideals of $R$ with $\mathcal J\subset \mathcal I.$ Suppose $R[\mathcal I]$ is integral over $R[\mathcal J]$. Then 
		\begin{itemize}
			\item[$(i)$] $s(\mathcal I)=s(\mathcal J),$
			\item[$(ii)$] $e_s(\mathfrak a,\mathcal I;N)=e_s(\mathfrak a,\mathcal J;N)$ for all  $s$ such that $s(\mathcal I)=s(\mathcal J)\leq s\leq d.$ 
		\end{itemize}
\end{Theorem}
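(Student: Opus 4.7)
The plan is to obtain (i) by a direct integral-dependence calculation and to obtain (ii) by applying the associativity formula (\ref{M10*}) to reduce to the already-established case of $m_R$-primary filtrations.

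For (i), the containment $\mathcal J\subset\mathcal I$ gives $J_1\subset I_1$, hence $V(I_1)\subset V(J_1)$ and $s(\mathcal I)\le s(\mathcal J)$. For the reverse inequality, fix $f\in I_1$. Since $ft\in R[\mathcal I]$ is integral over $R[\mathcal J]$, extracting the degree-$k$ homogeneous component of an integral relation for $ft$ yields
$$
f^k+a_1f^{k-1}+\cdots+a_k=0,\qquad a_i\in J_i\subset J_1.
$$
Hence $f^k\in J_1$, so $I_1\subset\sqrt{J_1}$ and $V(J_1)\subset V(I_1)$. Thus $V(I_1)=V(J_1)$ and $s(\mathcal I)=s(\mathcal J)$.

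For (ii), expanding both sides of the desired equality via Proposition \ref{Prop2} reduces the problem to proving
$$
e_{R_{\mathfrak p}}(\mathcal I_{\mathfrak p},N_{\mathfrak p})=e_{R_{\mathfrak p}}(\mathcal J_{\mathfrak p},N_{\mathfrak p})
$$
for every $\mathfrak p\in\Spec(R)$ with $\dim R/\mathfrak p=s$ and $\dim R_{\mathfrak p}=d-s$. If $\mathfrak p\notin V(I_1)=V(J_1)$, then any $x\in I_1\setminus\mathfrak p$ satisfies $x^n\in I_1^n\subset I_n$ and is a unit in $R_{\mathfrak p}$, forcing $I_nR_{\mathfrak p}=R_{\mathfrak p}=J_nR_{\mathfrak p}$ for all $n$, so both multiplicities vanish. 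Otherwise $\mathfrak p\supset I_1$, and since $\dim R/\mathfrak p=s=\dim R/I_1$, $\mathfrak p$ is minimal over $I_1$; by Lemma \ref{dim} it is also minimal over each $I_n$ and $J_n$, so $\mathcal I_{\mathfrak p}$ and $\mathcal J_{\mathfrak p}$ are filtrations of $\mathfrak p R_{\mathfrak p}$-primary ideals. Localizing the integral extension $R[\mathcal J]\subset R[\mathcal I]$ at $\mathfrak p$ shows $R_{\mathfrak p}[\mathcal I_{\mathfrak p}]$ is integral over $R_{\mathfrak p}[\mathcal J_{\mathfrak p}]$, and $R_{\mathfrak p}$ is analytically unramified since $R$ is. The desired equality then follows from the corresponding theorem for filtrations of $m_R$-primary ideals, \cite[Theorem 6.9]{CSS} (or \cite[Theorem 1.4]{C4}).

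The heavy lifting has already been absorbed into Proposition \ref{Prop2} and the cited $m_R$-primary statement, so there is no genuine analytic obstacle here. The only point that must be checked with care is that at each prime $\mathfrak p$ contributing to the associativity sum the localized filtrations $\mathcal I_{\mathfrak p}$ and $\mathcal J_{\mathfrak p}$ really do consist of $\mathfrak p R_{\mathfrak p}$-primary ideals, which is precisely what part (i) together with Lemma \ref{dim} guarantees.
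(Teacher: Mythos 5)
Your proof is correct and follows essentially the same route as the paper's: part (i) by an integral-dependence argument on the degree-one pieces (the paper packages this into Lemma \ref{LemmaIC3}, which rests on the same calculation as your inline argument via Lemma \ref{LemmaIC1}), and part (ii) by the associativity formula of Proposition \ref{Prop2} to reduce to the $m_{R_{\mathfrak p}}$-primary case settled in \cite[Theorem 6.9]{CSS}. The only small slip is that in the case $\mathfrak p\notin V(I_1)=V(J_1)$ you should pick the element $x$ in $J_1\setminus\mathfrak p$ (which lies in $I_1$ as well) so that it kills both localized filtrations at once, but this is cosmetic.
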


\begin{proof} We have that $V(I_1)=V(J_1)$ and $s(\mathcal I)=s(\mathcal J)$ by Lemma \ref{LemmaIC3}. Thus by Proposition \ref{Prop2}, $e_s(\mathcal I;N)=e_s(\mathcal J;N)=0$ for $s>s(\mathcal I)=s(\mathcal J)$ and
we have that
$$
e_s(\mathfrak a,\mathcal I;N)=\sum_{\mathfrak p}e_{R_{\mathfrak p}}(\mathcal I_{\mathfrak p},N_{\mfp})e_{\mathfrak a}(R/\mathfrak p)\mbox{ and }
e_s(\mathfrak a,\mathcal J;N)=\sum_{\mathfrak p}e_{R_{\mathfrak p}}(\mathcal J_{\mathfrak p},N_{\mfp})e_{\mathfrak a}(R/\mathfrak p)
$$
where the sums are over prime ideals $\mathfrak p$ such that $\dim R/\mathfrak p=s$ and $\dim R_{\mathfrak p}=d-s$.
Suppose that $s=s(\mathcal I)$. 
By Lemma \ref{dim}, if  $\dim R/\mathfrak p=s$, then either $I_1R_{\mathfrak p}$ and $J_1R_{\mathfrak p}$ are both $m_{R_{\mathfrak p}}$-primary ideals or $I_1R_{\mathfrak p}=J_1R_{\mathfrak p}=R_{\mathfrak p}$. In the first case,
$\mathcal J_{\mathfrak p}\subset \mathcal I_{\mathfrak p}$ are filtrations of $m_{R_{\mathfrak p}}$-primary ideals and in the second case, $\mathcal I_{\mathfrak p}$ and $\mathcal J_{\mathfrak p}$ are both the trivial $R_{\mathfrak p}$-filtration.

Let $\mathfrak p_1,\ldots,\mathfrak p_r$ be the prime ideals in $R$ such that the first case holds. Then
$$
e_s(\mathfrak a,\mathcal I;N)=\sum_{i=1}^re_{R_{\mathfrak p_i}}(\mathcal I_{\mathfrak p_i},N_{\mfp_i})e_{\mathfrak a}(R/\mathfrak p_i)\mbox{ and }
e_s(\mathfrak a,\mathcal J;N)=\sum_{i=1}^re_{R_{\mathfrak p_i}}(\mathcal J_{\mathfrak p_i},N_{\mfp_i})e_{\mathfrak a}(R/\mathfrak p_i).
$$

We have that $R[\mathcal J_{\mathfrak p_i}]$ is integral over  $R[\mathcal I_{\mathfrak p_i}]$ for all $1\le i\le r$ and
$R_{\mathfrak p_i}$ is analytically unramified for all $i$ by \cite[Proposition 9.1.4]{HS}. Thus 
$e_{R_{\mathfrak p_i}}(\mathcal I_{\mathfrak p_i},N_{\mfp_i})=e_{R_{\mathfrak p_i}}(\mathcal J_{\mathfrak p_i},N_{\mfp_i})$ for $1\le i\le r$ by \cite[Theorem 6.9]{CSS} or \cite[Appendix]{C4}, and so $e_s(\mfa,\mathcal I;N)=e_s(\mfa,\mathcal J;N)$.
\end{proof}

 \begin{Corollary}\label{Ap1} Suppose that $R$ is an analytically unramified local ring, $\mathfrak a$ is an $m_R$-primary ideal  and $\mathcal I=\{I_n\}$ is a filtration of $R$. Let 
 $$
J_n=\{f\in R\mid f^r\in \overline{I_{rn}}\mbox{ for some }r>0\}.
$$ 
 Then
 $$
 s:=s(\mathcal I)=s(\{\overline{I_n}\})=s(\{J_n\})
 $$
 and
 $$
 e_s(\mathfrak a,\mathcal I)=e_s(\mathfrak a,\{\overline{I_n}\})=e_s(\mathfrak a,\{J_n\}).
 $$ 
 \end{Corollary}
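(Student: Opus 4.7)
The plan is to exhibit the chain of filtrations $\mathcal I \subset \overline{\mathcal I} \subset \mathcal J$, where $\overline{\mathcal I}$ denotes $\{\overline{I_n}\}$ and $\mathcal J$ denotes $\{J_n\}$, and then reduce to two applications of Theorem \ref{Rees1}.

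First I would verify that $\mathcal J=\{J_n\}$ and $\overline{\mathcal I}=\{\overline{I_n}\}$ are genuine filtrations. That $\mathcal J$ is a filtration is immediate from Lemma \ref{BdLemma1}, since $\sum_{n\ge 0} J_n t^n = \overline{R[\mathcal I]}$ is a graded $R$-subalgebra of $R[t]$. For $\overline{\mathcal I}$, I would use the standard product formula for integral closures of ideals ($\overline{I}\cdot\overline{K}\subset \overline{IK}$) to conclude $\overline{I_i}\cdot\overline{I_j}\subset \overline{I_i I_j}\subset \overline{I_{i+j}}$, so $\overline{\mathcal I}$ is a filtration and $R[\overline{\mathcal I}]=\sum_n \overline{I_n}t^n$ is an $R$-subalgebra of $R[t]$.

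Next I would establish the inclusions $I_n\subset \overline{I_n}\subset J_n$. The first is trivial. For the second, if $f\in \overline{I_n}$ then $ft^n$ is integral over the subring $R[I_n t^n]\subset R[\mathcal I]$, so $ft^n\in \overline{R[\mathcal I]}=\sum_m J_m t^m$, giving $f\in J_n$. Thus $R[\mathcal I]\subset R[\overline{\mathcal I}]\subset R[\mathcal J]=\overline{R[\mathcal I]}$, and in particular $R[\mathcal J]$ is integral over both $R[\mathcal I]$ and (a fortiori) over the intermediate algebra $R[\overline{\mathcal I}]$.

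Finally, I would apply Theorem \ref{Rees1} twice. Applied to the pair $(\mathcal J,\mathcal I)$ (playing the roles of ``$\mathcal I$'' and ``$\mathcal J$'' in that theorem, respectively), it yields $s(\mathcal I)=s(\mathcal J)$ together with the multiplicity equality $e_s(\mathfrak a,\mathcal I)=e_s(\mathfrak a,\mathcal J)$ for every $s$ with $s(\mathcal I)\le s\le d$. Applied to the pair $(\mathcal J,\overline{\mathcal I})$, it analogously yields $s(\overline{\mathcal I})=s(\mathcal J)$ and $e_s(\mathfrak a,\overline{\mathcal I})=e_s(\mathfrak a,\mathcal J)$. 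Concatenating the two strings of equalities gives the statement of the corollary. There is no genuine obstacle here: the result is essentially the conjunction of Lemma \ref{BdLemma1} (which identifies $R[\mathcal J]$ as $\overline{R[\mathcal I]}$, so that integrality is automatic) with Theorem \ref{Rees1}.
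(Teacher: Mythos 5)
Your proposal is correct and follows essentially the same route as the paper's own proof: both use the chain $R[\mathcal I]\subset R[\overline{\mathcal I}]\subset \overline{R[\mathcal I]}=R[\mathcal J]$ (from Lemma \ref{BdLemma1}) and then conclude by Theorem \ref{Rees1}. You have simply filled in the small verifications (that $\overline{\mathcal I}$ is a filtration and that $\overline{I_n}\subset J_n$) that the paper leaves implicit in its citation of Lemma \ref{BdLemma1}.
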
 
 
 \begin{proof}
 We have that $R[\mathcal I]\subset R[\overline{\mathcal I}]\subset\overline{R[\mathcal I]}$ by Lemma \ref{BdLemma1}. Thus 
 $s=s(\mathcal I)=s(\{\overline{I_n}\})=s(\{J_n\})$  and $e_s(\mathfrak a,\mathcal I)=e_s(\mathfrak a,\{\overline{I_n}\})=e_s(\mathfrak a,\{J_n\})$  by Theorem \ref{Rees1}.
 \end{proof}
 
The Minkowski inequalities were formulated and proven for   $m_R$-primary ideals in reduced equicharacteristic zero local rings by Teissier \cite{T1}, \cite{T2} and proven for  $m_R$-primary ideals in full generality, for  local rings,  by Rees and Sharp \cite{RS}. The same inequalities hold for filtrations. They were proven for $m_R$-filtrations in local rings $R$ such that $\dim N(\hat R)<\dim R$ in \cite[Theorem 6.3]{CSS}. We prove them for arbitrary filtrations in an analytically unramified local ring. 

\begin{Theorem}\label{Mink1} (Minkowski Inequalities)  Let $R$  be an analytically unramified  local ring of dimension $d,$   $N$ be a finitely generated $R$-module, $\mathfrak a$ be an $m_R$-primary ideal,      $\mathcal I=\{I_j\}$ and $\mathcal J=\{J_j\}$ be filtrations of ideals of $R.$ Let $\max\{s(\mathcal I), s(\mathcal J)\}\leq s< d$ and $k:=d-s$. 
	\begin{itemize}
		\item[$(i)$] Let $k\geq 2.$ For $1\le i\le k-1,$ $$e_s(\mathfrak a,\mathcal I^{[i]},\mathcal J^{[k-i]};N)^2\le e_s(\mathfrak a,\mathcal I^{[i+1]},\mathcal J^{[k-i-1]};N)e_s(\mathfrak a, \mathcal I^{[i-1]},\mathcal J^{[k-i+1]};N),$$
		\item[$(ii)$]  For $0\le i\le k$,	$e_s(\mathfrak a,\mathcal I^{[i]},\mathcal J^{[k-i]};N)e_s(\mathfrak a,\mathcal I^{[k-i]},\mathcal J^{[i]};N)\le e_{s}(\mathfrak a,\mathcal I;N)e_{s}(\mathfrak a,\mathcal J;N)$,\\
		\item[$(iii)$] For $0\le i\le k$, $e_s(\mathfrak a,\mathcal I^{[k-i]},\mathcal J^{[i]};N)^{k}\le e_{s}(\mathfrak a,\mathcal I;N)^{k-i}e_{s}(\mathfrak a,\mathcal J;N)^i$ and\\
		\item[$(iv)$]  $s(\mathcal I\mathcal J)=\max\{s(\mathcal I), s(\mathcal J)\}$ and $e_{s}(\mathfrak a,\mathcal I\mathcal J;N)^{\frac{1}{k}}\le e_{s}(\mathfrak a,\mathcal I;N)^{\frac{1}{k}}+e_{s}(\mathfrak a,\mathcal J;N)^{\frac{1}{k}}$, where $\mathcal I\mathcal J=\{I_jJ_j\}$.
	\end{itemize}
\end{Theorem}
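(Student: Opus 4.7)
The plan is to reduce each of the four inequalities to the already-known case of $m_R$-primary filtrations by means of the associativity formula (\ref{M9}) from Theorem \ref{Theorem5}, localizing at the primes $\mathfrak p$ with $\dim R/\mathfrak p=s$ and $\dim R_{\mathfrak p}=d-s=k$. At each such prime, $R_{\mathfrak p}$ is analytically unramified by \cite[Proposition 9.1.4]{HS}, has dimension $k$, and by Lemma \ref{dim} each of $\mathcal I_{\mathfrak p}$, $\mathcal J_{\mathfrak p}$ is either a filtration of $m_{R_{\mathfrak p}}$-primary ideals or is the trivial filtration. In either case the Minkowski inequality \cite[Theorem 6.3]{CSS} applies to $R_{\mathfrak p}$ and $N_{\mathfrak p}$, giving the pointwise bound
\[
e_{R_{\mathfrak p}}(\mathcal I_{\mathfrak p}^{[i]},\mathcal J_{\mathfrak p}^{[k-i]};N_{\mathfrak p})^{2}\le e_{R_{\mathfrak p}}(\mathcal I_{\mathfrak p}^{[i+1]},\mathcal J_{\mathfrak p}^{[k-i-1]};N_{\mathfrak p})\,e_{R_{\mathfrak p}}(\mathcal I_{\mathfrak p}^{[i-1]},\mathcal J_{\mathfrak p}^{[k-i+1]};N_{\mathfrak p})
\]
for $1\le i\le k-1$.

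To obtain (i), I multiply the square root of this inequality by $e_{\mathfrak a}(R/\mathfrak p)$, writing the factor as $\sqrt{e_{\mathfrak a}(R/\mathfrak p)}\cdot\sqrt{e_{\mathfrak a}(R/\mathfrak p)}$, so that for each $\mathfrak p$
\[
e_{\mathfrak a}(R/\mathfrak p)\,e_{R_{\mathfrak p}}(\mathcal I_{\mathfrak p}^{[i]},\mathcal J_{\mathfrak p}^{[k-i]};N_{\mathfrak p})\le \sqrt{A_{\mathfrak p}}\sqrt{B_{\mathfrak p}}
\]
with $A_{\mathfrak p}=e_{\mathfrak a}(R/\mathfrak p)\,e_{R_{\mathfrak p}}(\mathcal I_{\mathfrak p}^{[i+1]},\mathcal J_{\mathfrak p}^{[k-i-1]};N_{\mathfrak p})$ and $B_{\mathfrak p}$ defined symmetrically. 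Summing over the finite set of relevant primes $\mathfrak p$ and applying the Cauchy--Schwarz inequality $\sum\sqrt{A_{\mathfrak p}}\sqrt{B_{\mathfrak p}}\le (\sum A_{\mathfrak p})^{1/2}(\sum B_{\mathfrak p})^{1/2}$, the associativity formula (\ref{M9}) identifies the sums with the global mixed multiplicities and yields (i).

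The remaining statements follow from (i) by purely formal manipulations. Inequality (i) says the sequence $a_i:=e_s(\mathfrak a,\mathcal I^{[k-i]},\mathcal J^{[i]};N)$ is log-convex in $i$, so for each $0\le i\le k$, $\log a_i+\log a_{k-i}\le \log a_0+\log a_k$, which is (ii), and $\log a_i\le \tfrac{k-i}{k}\log a_0+\tfrac{i}{k}\log a_k$, which after exponentiating to the $k$th power is (iii). For (iv), the equality $s(\mathcal I\mathcal J)=\max\{s(\mathcal I),s(\mathcal J)\}$ is immediate from $V(I_1J_1)=V(I_1)\cup V(J_1)$; then applying Theorem \ref{Theorem3} to $\mathcal I,\mathcal J$ and evaluating the polynomial $H_s$ at $(1,1)$ gives
\[
e_s(\mathfrak a,\mathcal I\mathcal J;N)=H_s(1,1)=\sum_{i=0}^{k}\binom{k}{i}e_s(\mathfrak a,\mathcal I^{[k-i]},\mathcal J^{[i]};N),
\]
and inserting (iii) on the right-hand side collapses the sum by the binomial theorem into $\bigl(e_s(\mathfrak a,\mathcal I;N)^{1/k}+e_s(\mathfrak a,\mathcal J;N)^{1/k}\bigr)^{k}$, from which (iv) follows by taking $k$th roots.

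The main obstacle is the step from the pointwise (local) Minkowski inequality at each prime to the global inequality for the sum in (\ref{M9}); this is exactly where Cauchy--Schwarz is needed, and it is also the only genuinely new ingredient beyond \cite[Theorem 6.3]{CSS}. All other work is bookkeeping: checking that the local hypotheses (analytic unramifiedness of $R_{\mathfrak p}$, $m_{R_{\mathfrak p}}$-primariness or triviality of the localized filtrations) are met so that the cited Minkowski theorem can be invoked, and then extracting (ii)--(iv) by the standard log-convexity and binomial-theorem arguments.
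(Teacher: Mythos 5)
Your proposal is essentially correct, and your treatment of (i) is exactly the paper's: localize via the associativity formula, apply the local Minkowski inequality of \cite[Theorem 6.3]{CSS} at each prime $\mathfrak p$ with $\dim R/\mathfrak p=s$ and $\dim R_{\mathfrak p}=k$, and aggregate using Cauchy--Schwarz (H\"older with exponent~$2$). Two differences from the paper's write-up are worth flagging. First, in passing from (i) to (ii) and (iii) by log-convexity of $a_i:=e_s(\mathfrak a,\mathcal I^{[k-i]},\mathcal J^{[i]};N)$, you tacitly assume $a_i>0$ for all $i$, which is what makes the logarithms meaningful; the paper spends a few lines on the degenerate cases, noting that if $a_0=0$, $a_k=0$, or some intermediate $a_i=0$, then the recursion $a_i^2\le a_{i-1}a_{i+1}$ from (i) forces all intermediate $a_i$ to vanish and (ii)--(iii) hold trivially, and otherwise cites \cite[Corollary 17.7.3]{HS}. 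That small case analysis should be added to close the gap, though it is routine. Second, for (iv) you take a global route: write $e_s(\mathfrak a,\mathcal I\mathcal J;N)=H_s(1,1)=\sum_{i}\binom{k}{i}a_i$ using Theorem \ref{Theorem3} and collapse with (iii) via the binomial theorem. The paper instead proves a pointwise local version of (iv) at each prime (again from \cite[Theorem 6.3]{CSS}) and sums these using the classical Minkowski inequality for finite sums from Hardy--Littlewood--P\'olya, keeping (iv) self-contained and the local-to-global passage parallel to that of (i). Both routes are valid; yours is slightly shorter but depends on (iii), while the paper's is independent of (ii)--(iii). The essential ingredients --- the local Minkowski inequality of \cite{CSS} combined with the associativity formula --- are the same in both.
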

\begin{proof}
	Let $\mfp\in\Spec R$ with $\dim R/\mfp=s$ and $\dim R_{\mfp}=d-s$. If $\mfp\in A({\mathcal I})$ then $I_{n}R_\mfp$ are $\mfp R_\mfp$-primary ideals for all $n\geq 1$ (respectively if $\mfp\in A({\mathcal J})$ then $J_{n}R_\mfp$ are $\mfp R_\mfp$-primary ideals for all $n\geq 1$). If $\mfp\notin A({\mathcal I})$ then $I_{n}R_\mfp=R_\mfp$ for all $n\geq 1$ (respectively if $\mfp\notin A({\mathcal J})$ then $J_{n}R_\mfp=R_\mfp$ for all $n\geq 1$).
	\\Let $T:=(A(\mathcal I)\cup A(\mathcal J))\cap\{\mathfrak p\in\mbox{Spec}(R): \dim R_{\mfp}=d-s\}$ and $|T|=r.$ Suppose $T=\{\mfp_1,\ldots,\mfp_r\}.$ 
	\\	 $(i)$ Suppose that $i$ satisfies $1\le i\le k-1$.
	Let $\mfp\in\Spec R$ with $\dim R/\mfp=s$ and $\dim R_{\mfp}=d-s$. 
	We  first show that
	\begin{equation}\label{eq50}
	e_{R_{\mfp}}(\mathcal I_{\mfp}^{[i]}, \mathcal J_{\mfp}^{[k-i]};N_{\mfp})^2\leq e_{R_{\mfp}}(\mathcal I_{\mfp}^{[i+1]},\mathcal J_{\mfp}^{[k-i-1]};N_{\mfp})e_{R_{\mfp}}(\mathcal I_{\mfp}^{[i-1]},\mathcal J_{\mfp}^{[k-i+1]};N_{\mfp}).
	\end{equation}	
	If $\mfp\in A({\mathcal I})\setminus A({\mathcal J})$ or $\mfp\in A({\mathcal J})\setminus A({\mathcal I})$,  in both cases we have
	$$e_{R_{\mfp}}(\mathcal I_{\mfp}^{[i]}, \mathcal J_{\mfp}^{[k-i]};N_{\mfp})=0\mbox{ for all }1\le i\le k-1.
	$$
	Suppose $\mfp\in A({\mathcal I})\cap A({\mathcal J}).$ Then by  \cite[Theorem 6.3, 1) , 2)]{CSS},  we have 
	$$e_{R_{\mfp}}(\mathcal I_{\mfp}^{[i]}, \mathcal J_{\mfp}^{[k-i]};N_{\mfp})^2\leq e_{R_{\mfp}}(\mathcal I_{\mfp}^{[i+1]},\mathcal J_{\mfp}^{[k-i-1]};N_{\mfp})e_{R_{\mfp}}(\mathcal I_{\mfp}^{[i-1]},\mathcal J_{\mfp}^{[k-i+1]};N_{\mfp})
	$$
	By (\ref{M9}), we have
	\begin{equation}\label{eq20} e_s(\mathfrak a,\mathcal I^{[d_1]},\mathcal J^{[d_2]};N)=\sum_{j=1 }^re_{R_{\mathfrak p_j}}(\mathcal I_{\mfp_j}^{[d_1]},\mathcal J_{\mfp_j}^{[d_2]};N_{\mfp_j})e_{\mathfrak a}(R/\mfp_j).
	\end{equation}
	For $1\le j\le r$, set 
	$$
	x_1(j)=e_{R_{\mathfrak p_j}}(\mathcal I_{\mfp_j}^{[i+1]},\mathcal J_{\mfp_j}^{[k-i-1]};N_{\mfp_j})^{\frac{1}{2}}e_{\mathfrak a}(R/\mfp_j)^{\frac{1}{2}}\mbox{ and }
	$$
	$$
	x_2(j)=e_{R_{\mathfrak p_j}}(\mathcal I_{\mfp_j}^{[i-1]},\mathcal J_{\mfp_j}^{[k-i+1]};N_{\mfp_j})^{\frac{1}{2}}e_{\mathfrak a}(R/\mfp_j)^{\frac{1}{2}}.	
	$$	
	Then, by (\ref{eq20}) and (\ref{eq50}), 
	\begin{eqnarray}\label{eq51}
	&&e_s(\mathfrak a,\mathcal I^{[i]},\mathcal J^{[k-i]};N)^2
	\\&=& (\sum_{j=1}^re_{R_{\mathfrak p_j}}(\mathcal I_{\mfp_j}^{[i]},\mathcal J_{\mfp_j}^{[k-i]};N_{\mfp_j})e_{\mathfrak a}(R/\mfp_j))^2\nonumber\\ 
	&\le& (\sum_{j=1}^rx_1(j)x_2(j))^2\le (\sum_{j=1}^rx_1(j)^2)(\sum_{j=1}^rx_2(j)^2)\nonumber\\
	&=&(\sum_{j=1}^re_{R_{\mathfrak p_j}}(\mathcal I_{\mfp_j}^{[i+1]},\mathcal J_{\mfp_j}^{[k-i-1]};N_{\mfp_j})e_{\mathfrak a}(R/\mfp_j))
	(\sum_{j=1}^re_{R_{\mathfrak p_j}}(\mathcal I_{\mfp_j}^{[i-1]},\mathcal J_{\mfp_j}^{[k-i+1]};N_{\mfp_j})e_{\mathfrak a}(R/\mfp_j))\nonumber\\
	&=& e_s(\mathfrak a,\mathcal I^{[i+1]},\mathcal J^{[k-i-1]};N)e_s(\mathfrak a, \mathcal I^{[i-1]},\mathcal J^{[k-i+1]};N)\nonumber,	
	\end{eqnarray}
	which establishes formula  $(i)$ of the statement of the theorem.
	The  inequality between the second and   third lines of (\ref{eq51}) is H\"older's inequality, formula (2.8.3) on page 24 of \cite{Ho}, with $k=2$ (so its conjugate $k'=\frac{k}{k-1}=2$ also).
	\\$(ii)$ and $(iii):$ 
Let $e_i=: e_s(\mathfrak a,\mathcal I^{[k-i]},\mathcal J^{[i]};N)$ for all $0\leq i\leq k.$ If $e_0=0$ or $e_k=0$ then by part $(i),$ we have $e_i=0$ for all $0<i<k$ and we get the inequalities $(ii)$ and $(iii).$ Suppose  $e_0>0$ and $e_k>0.$ If $e_i=0$ for some $i\in\{1,\ldots,k-1\}$ then using  part $(i),$ we get $e_i=0$ for all $0< i< k$ and hence the inequalities $(ii)$ and $(iii)$ hold. If $e_i>0$ for all  $0\leq i\leq k$ then the proof follows from the  argument given in \cite[Corollary 17.7.3, (1) and (2)]{HS}.	
\\$(iv)$ Since $V(I_1)\cup V(J_1)=V(I_1J_1),$ we have 
	$s(\mathcal I\mathcal J)=\max\{s(\mathcal I), s(\mathcal J)\}.$ Let $\mfp\in\Spec R$ with $\dim R/\mfp=s$ and $\dim R_{\mfp}=d-s$. 
	We first show that
	\begin{equation}\label{eq150}
	e_{R_{\mfp}}(\mathcal I_\mfp\mathcal J_\mfp; N_{\mfp})^{\frac{1}{k}}\le  e_{R_{\mfp}}(\mathcal I_\mfp; N_{\mfp})^{\frac{1}{k}}+ e_{R_{\mfp}}(\mathcal J_\mfp; N_{\mfp})^{\frac{1}{k}}.
	\end{equation}	
	\\If $\mfp\in A({\mathcal I})\setminus A({\mathcal J})$ (respectively $\mfp\in A({\mathcal J})\setminus A({\mathcal I})$) then 
	\begin{eqnarray*}&& e_{R_{\mfp}}(\mathcal I_\mfp\mathcal J_\mfp;N_{\mfp})^{\frac{1}{k}}= e_{R_{\mfp}}(\mathcal I_\mfp;N_{\mfp})^{\frac{1}{k}}\le e_{R_{\mfp}}(\mathcal I_\mfp;N_{\mfp})^{\frac{1}{k}}+e_{R_{\mfp}}(\mathcal J_\mfp;N_{\mfp})^{\frac{1}{k}}\\&&
		\hspace{-2.25cm}(\mbox{respectively } e_{R_{\mfp}}(\mathcal I_\mfp\mathcal J_\mfp;N_{\mfp})^{\frac{1}{k}}= e_{R_{\mfp}}(\mathcal J_\mfp;N_{\mfp})^{\frac{1}{k}}\le  e_{R_{\mfp}}(\mathcal I_\mfp;N_{\mfp})^{\frac{1}{k}}+ e_{R_{\mfp}}(\mathcal J_\mfp;N_{\mfp})^{\frac{1}{k}}).\end{eqnarray*}
	\\Suppose $\mfp\in A({\mathcal I})\cap A({\mathcal J}).$ Then by \cite[Theorem 6.3, 4)]{CSS},  we have $$ e_{R_{\mfp}}(\mathcal I_\mfp\mathcal J_\mfp; N_{\mfp})^{\frac{1}{k}}\le  e_{R_{\mfp}}(\mathcal I_\mfp; N_{\mfp})^{\frac{1}{k}}+ e_{R_{\mfp}}(\mathcal J_\mfp; N_{\mfp})^{\frac{1}{k}}$$ (where $\mathcal I_\mfp\mathcal J_\mfp=\{I_j J_jR_\mfp\}$). 
	By (\ref{M9}), we have
	\begin{equation}\label{eq120} e_{s}(\mathfrak a,\mathcal I\mathcal J;N)^{\frac{1}{k}} =\big(\sum_{j=1}^re_{R_{\mathfrak p_j}}(\mathcal I_{\mfp_j}\mathcal J_{\mfp_j};N_{\mfp_j})e_{\mathfrak a}(R/\mfp_j)\big)^{\frac{1}{k}}.
	\end{equation}
	For $1\le j\le r$, set 
	\begin{eqnarray}
	&& x_1(j)=e_{R_{\mfp_j}}(\mathcal I_{\mfp_j}; N_{\mfp_j})^{\frac{1}{k}}e_{\mathfrak a}(R/\mfp_j)^{\frac{1}{k}},
	~~x_2(j)=e_{R_{\mfp_j}}(\mathcal J_{\mfp_j}; N_{\mfp_j})^{\frac{1}{k}}e_{\mathfrak a}(R/\mfp_j)^{\frac{1}{k}}\mbox{ and }\nonumber\\&&
	u(j)=e_{R_{\mathfrak p_j}}(\mathcal I_{\mfp_j}\mathcal J_{\mfp_j};N_{\mfp_j})^{\frac{1}{k}}e_{\mathfrak a}(R/\mfp_j)^{\frac{1}{k}}.\nonumber\end{eqnarray}	
	Then $u(j)\leq x_1(j)+x_2(j)$ for all $j=1,\ldots,r$ and by (\ref{eq120}) and (\ref{eq150}), 
	\begin{eqnarray}\label{newmin}
	{\hspace{0.5cm}}~~e_{s}(\mathfrak a,\mathcal I\mathcal J;N)^{\frac{1}{k}} &=&\big(\sum_{j=1}^re_{R_{\mathfrak p_j}}(\mathcal I_{\mfp_j}\mathcal J_{\mfp_j};N_{\mfp_j})e_{\mathfrak a}(R/\mfp_j)\big)^{\frac{1}{k}}
	\\&=&\big(\sum_{j=1}^ru(j)^k\big)^{\frac{1}{k}}\leq \big(\sum_{j=1}^r(x_1(j)+x_2(j))^k\big)^{\frac{1}{k}}\nonumber
	\\&\leq& \big(\sum_{j=1}^r(x_1(j))^k\big)^{\frac{1}{k}} +\big(\sum_{j=1}^r(x_2(j))^k\big)^{\frac{1}{k}}\nonumber
	\\&=& \big(\sum_{j=1}^r(e_{R_{\mfp_j}}(\mathcal I_{\mfp_j}, N_{\mfp_j})e_{\mathfrak a}(R/\mfp_j)\big)^{\frac{1}{k}} + \big(\sum_{j=1}^r(e_{R_{\mfp_j}}(\mathcal J_{\mfp_j}; N_{\mfp_j})e_{\mathfrak a}(R/\mfp_j)\big)^{\frac{1}{k}}\nonumber\\&=&e_{s}(\mathfrak a,\mathcal I;N)^{\frac{1}{k}}+e_{s}(\mathfrak a,\mathcal J;N)^{\frac{1}{k}}\nonumber
	\end{eqnarray}		
	which establishes formula  $(iv)$ of the statement of the theorem. For $k>1,$ the inequality between  the second and third lines of (\ref{newmin}) is Minkowski's inequality, Section $2.12~(28)$ on page 32 of \cite{Ho}.	
\end{proof}

\begin{Theorem}\label{TheoremN10} Suppose that $R$ is an analytically unramified local ring of dimension $d$ and that $\mathfrak a$ is an $m_R$-primary ideal. 
	Let $\mathcal I(1)$ and $\mathcal I(2)$ be two filtrations of ideals on $R,$ $\max\{s(\mathcal I(1)), s(\mathcal I(1))\}\leq s$ and $d-s\geq 1.$ Suppose there exist $a,b\in \ZZ_{>0}$ such that $$\overline{\sum\limits_{n\geq 0}I(1)_{an}}=\overline{\sum\limits_{n\geq 0}I(2)_{bn}}.$$ 
	Then  the Minkowski equality 
	$$
	e_s(\mathfrak a,\mathcal I(1)\mathcal I(2))^{\frac{1}{d-s}}= e_0^{\frac{1}{d-s}}+e_{d-s}^{\frac{1}{d-s}}
	$$
	holds between $\mathcal I(1)$ and $\mathcal I(2)$ where $e_0=e_s(\mathfrak a,\mathcal I(1)^{[d-s]},\mathcal I(2)^{[0]})$ and $e_{d-s}=e_s(\mathfrak a,\mathcal I(1)^{[0]},\mathcal I(2)^{[d-s]})$.
\end{Theorem}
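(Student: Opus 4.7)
The plan is to reduce to the known $m_R$-primary case by localizing at each prime of the correct dimension, and then to reassemble the resulting local Minkowski equalities into a global one by exploiting the equality case of Minkowski's inequality for finite sums. I will mirror the proof of Theorem \ref{Mink1}(iv), checking that equality is retained at every step.

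The first step is to apply the associativity formula (Theorem \ref{Theorem5}) to express each of $e_s(\mathfrak a,\mathcal I(1)\mathcal I(2))$, $e_0$, and $e_{d-s}$ as a finite sum over primes $\mathfrak p$ with $\dim R/\mathfrak p=s$ and $\dim R_{\mathfrak p}=d-s$, weighted by $e_{\mathfrak a}(R/\mathfrak p)$. Lemma \ref{LemmaIC2} shows that the integral-closure hypothesis descends to $R_{\mathfrak p}[t]$, giving $\overline{\sum_n I(1)_{an}R_{\mathfrak p}t^n}=\overline{\sum_n I(2)_{bn}R_{\mathfrak p}t^n}$. Combining this with Lemmas \ref{LemmaIC1} and \ref{dim}, one verifies that for any such $\mathfrak p$ either both $\mathcal I(1)_{\mathfrak p}$ and $\mathcal I(2)_{\mathfrak p}$ are $\mathfrak pR_{\mathfrak p}$-primary filtrations, or both are trivial (contributing zero to all three sums). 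Let $T$ denote the finite set of primes of the first kind.

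The second step applies the $m_R$-primary case of the theorem at each $\mathfrak p\in T$. Since $R_{\mathfrak p}$ is analytically unramified of dimension $k:=d-s$ and the hypothesis descends to $\mathcal I(1)_{\mathfrak p},\mathcal I(2)_{\mathfrak p}$, the $m_R$-filtration version of the Minkowski equality (established in \cite{CSS}) yields
\[
e_{R_{\mathfrak p}}(\mathcal I(1)_{\mathfrak p}\mathcal I(2)_{\mathfrak p})^{1/k}=e_{R_{\mathfrak p}}(\mathcal I(1)_{\mathfrak p})^{1/k}+e_{R_{\mathfrak p}}(\mathcal I(2)_{\mathfrak p})^{1/k}.
\]
In parallel, applying Theorem \ref{Rees1} to the Veronese filtrations $\{I(1)_{an}R_{\mathfrak p}\}$ and $\{I(2)_{bn}R_{\mathfrak p}\}$, which by hypothesis share a common integral closure in $R_{\mathfrak p}[t]$, together with the scaling identity $e_{R_{\mathfrak p}}(\{I(j)_{cn}R_{\mathfrak p}\})=c^k e_{R_{\mathfrak p}}(\mathcal I(j)_{\mathfrak p})$, produces the crucial proportionality
\[
a^k e_{R_{\mathfrak p}}(\mathcal I(1)_{\mathfrak p})=b^k e_{R_{\mathfrak p}}(\mathcal I(2)_{\mathfrak p})\qquad(\mathfrak p\in T),
\]
whose ratio $b/a$ is independent of $\mathfrak p$.

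In the final step I reassemble these ingredients exactly along the template of the proof of Theorem \ref{Mink1}(iv). Setting $x_1(\mathfrak p)=e_{R_{\mathfrak p}}(\mathcal I(1)_{\mathfrak p})^{1/k}e_{\mathfrak a}(R/\mathfrak p)^{1/k}$ and $x_2(\mathfrak p)=e_{R_{\mathfrak p}}(\mathcal I(2)_{\mathfrak p})^{1/k}e_{\mathfrak a}(R/\mathfrak p)^{1/k}$, the constant ratio $x_1(\mathfrak p)/x_2(\mathfrak p)=b/a$ shows that the vectors $(x_1(\mathfrak p))_{\mathfrak p\in T}$ and $(x_2(\mathfrak p))_{\mathfrak p\in T}$ are proportional, which is precisely the equality case of Minkowski's inequality on $\ell^k$. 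Combined with the termwise equalities $u(\mathfrak p)=x_1(\mathfrak p)+x_2(\mathfrak p)$ supplied by the local Minkowski equalities, every inequality in the chain from the proof of Theorem \ref{Mink1}(iv) becomes an equality, delivering the global Minkowski equality. The main obstacle is ensuring that the $m_R$-primary case transfers cleanly to each $\mathfrak p\in T$ and that the scaling constants $a$ and $b$ are tracked carefully enough through Theorem \ref{Rees1} to force the proportionality that drives the reassembly.
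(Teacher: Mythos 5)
Your proof is essentially correct, but it takes a genuinely different route from the paper and mislocates the key prerequisite. The paper localizes at the finitely many primes $\mathfrak p_i\in \mathcal A(\mathcal I(1))=\mathcal A(\mathcal I(2))$ and then invokes \cite[Theorem 8.4]{C3} \emph{and its proof} to conclude that each local Bhattacharya function $H_i(n_1,n_2)$ is literally the pure power $c_i\left(\frac{n_1}{a}+\frac{n_2}{b}\right)^{d-s}$; summing these over $i$ shows the global $P(n_1,n_2)$ is itself a pure power, and the Minkowski equality is then read off by evaluating at $(1,1)$, $(1,0)$, $(0,1)$. You instead extract only two scalar consequences from the local picture --- the local Minkowski equality and the proportionality $a^k e_{R_{\mathfrak p}}(\mathcal I(1)_{\mathfrak p})=b^k e_{R_{\mathfrak p}}(\mathcal I(2)_{\mathfrak p})$ --- and then retrace the inequalities in the proof of Theorem \ref{Mink1}(iv), showing every inequality in that chain is forced to be an equality: the termwise ones by the local Minkowski equalities, and the $\ell^k$-Minkowski step by proportionality of the vectors $(x_1(\mathfrak p))$ and $(x_2(\mathfrak p))$. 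This is a valid reassembly, and it has the minor virtue of using the 0-dimensional equality statement as a black box rather than requiring the pure-power polynomial form from the \emph{proof} of \cite[Theorem 8.4]{C3}.

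Two points deserve correction. First, the local Minkowski equality for $m_{R_{\mathfrak p}}$-primary filtrations under the integral-closure hypothesis is not established in \cite{CSS} --- that paper proves the Minkowski \emph{inequalities} (\cite[Theorem 6.3]{CSS}); the equality under the Veronese integral-closure hypothesis is \cite[Theorem 8.4]{C3}, which is exactly what the paper's proof cites. Your argument still goes through, but the citation should be corrected, since the result you are leaning on at this step is precisely the 0-filtration case of the theorem being proved. Second, when you derive the proportionality you write ``applying Theorem \ref{Rees1} to the Veronese filtrations,'' but Theorem \ref{Rees1} is phrased for a pair $\mathcal J\subset\mathcal I$ with one Rees algebra integral over the other; the two Veronese filtrations need not be nested. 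The clean route is to compare each to the common integral-closure filtration via Corollary \ref{Ap1} (which is itself a consequence of Theorem \ref{Rees1}) to conclude $e_{R_{\mathfrak p}}(\{I(1)_{an}R_{\mathfrak p}\})=e_{R_{\mathfrak p}}(\{I(2)_{bn}R_{\mathfrak p}\})$, and then apply the Veronese scaling. With these two repairs, your argument is sound.
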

\begin{proof}
Since $\overline{\sum\limits_{n\geq 0}I(1)_{an}}=\overline{\sum\limits_{n\geq 0}I(2)_{bn}},$ by Lemmas \ref{dim}  and \ref{LemmaIC3} 
 we have $V(I(1)_{n})=V(I(2)_{n})$ for all $n\geq 1$ and $s(\mathcal I(1))=s(\mathcal I(2)).$ 

Let $\mathcal A(\mathcal I(1))=\mathcal A(\mathcal I(2))=\{\mfp_1,\ldots, \mfp_r\}.$
For all $n_1,n_2\in\mathbb N,$ let
$$
P(n_1,n_2)=\lim_{m\rightarrow\infty}\frac{e_s(\mathfrak a,R/I(1)_{mn_1}I(2)_{mn_2})}{m^{d-s}/(d-s)!}
$$
and for $1\le i\le r$, let
\begin{align*}
P_i(n_1,n_2)&=H_i(n_1,n_2)e_{\mathfrak a}(R/\mathfrak p_i)
\end{align*}
where $$H_i(n_1,n_2)=
\lim_{m\rightarrow\infty} \frac{\ell_{R_{\mfp_i}}(R_{\mathfrak p_i}/I(1)_{mn_1}R_{\mfp_i}I(2)_{mn_2}R_{\mfp_i})}{m^{d-s}/(d-s)!} .$$
	
For all $\mfp_i$, we have  $\overline{\sum\limits_{n\geq 0}I(1)_{an}R_{\mfp_i}}=\overline{\sum\limits_{n\geq 0}I(2)_{bn}R_{\mfp_i}}.$ Therefore by \cite[Theorem 8.4]{C3} and its proof,  which proves this theorem for $0$-filtrations, 
for all $1\le i\le r,$ there exists $c_i\in\mathbb R,$ such that for all $n_1,n_2\in\mathbb N,$ we have $$H_i(n_1,n_2)=c_i(\frac{n_1}{a}+\frac{n_2}{b})^{d-s}$$ and hence $$P_i(n_1,n_2)=c_i(\frac{n_1}{a}+\frac{n_2}{b})^{d-s}e_{\mathfrak a}(R/\mathfrak p_i).$$
Using Theorem \ref{Theorem5},
for all $n_1,n_2\in\mathbb N,$  we get 
\begin{equation}\label{eqgen}
P(n_1,n_2)=\sum\limits_{i=1}^rP_i(n_1,n_2)=\sum\limits_{i=1}^rc_i(\frac{n_1}{a}+\frac{n_2}{b})^{d-s}e_{\mathfrak a}(R/\mathfrak p_i)=c(\frac{n_1}{a}+\frac{n_2}{b})^{d-s}
\end{equation}
where $c=\sum\limits_{i=1}^rc_ie_{\mathfrak a}(R/\mathfrak p_i).$ Therefore $$P(1,1)^{\frac{1}{d-s}}=P(1,0)^{\frac{1}{d-s}}+P(0,1)^{\frac{1}{d-s}}.$$

\end{proof}

The following lemma is well known. We provide a proof for the convenience of the reader. Our proof is an outline of the proof in
 \cite[Lemma 14, page 8]{KG}.
 
 \begin{Lemma} \label{geq}
	Let $R$  be a $d$-dimensional  local ring and let $I_i \subseteq J_i$ be $m$-primary ideals for $i=1,\ldots,d.$ Then 
	$$e(I_1^{[1]},\ldots,I_d^{[1]}) \geq e(J_1^{[1]},\ldots,J_d^{[1]}).$$
\end{Lemma}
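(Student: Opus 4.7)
My plan is induction on the dimension $d$, reducing each inductive step by projecting modulo a generic element of one of the ideals. The base case $d=1$ is immediate: since $\ell_R(R/I_1^n) \geq \ell_R(R/J_1^n)$ for all $n$ and both functions are linear in $n$ for $n$ large, their leading coefficients satisfy $e(I_1) \geq e(J_1)$. For the inductive step ($d \geq 2$), I would first telescope
\[
e(I_1^{[1]}, I_2^{[1]}, \ldots, I_d^{[1]}) \geq e(J_1^{[1]}, I_2^{[1]}, \ldots, I_d^{[1]}) \geq \cdots \geq e(J_1^{[1]}, \ldots, J_d^{[1]})
\]
to reduce to the case where exactly one index changes; say $I_1 \subseteq J_1$ and $I_j = J_j$ for $j \geq 2$. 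Next I would extend to the Nagata ring $R(t) = R[t]_{m_R R[t]}$ to assume the residue field is infinite (a faithfully flat extension preserving dimension, $m_R$-primariness, and all mixed multiplicities), and use additivity of mixed multiplicities over top-dimensional minimal primes of $R$ to further assume $R$ is a local domain of dimension $d$.

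Now comes the projection step. Since $d \geq 2$, I choose $x \in I_2$ that is superficial (in the sense of joint reductions of Rees and Risler) \emph{simultaneously} for both computations $e(I_1^{[1]}, I_2^{[1]}, \ldots, I_d^{[1]}; R)$ and $e(J_1^{[1]}, I_2^{[1]}, \ldots, I_d^{[1]}; R)$. Each superficiality condition cuts out a Zariski dense open subset of the finite-dimensional $R/m_R$-vector space $I_2 / m_R I_2$, so their intersection is nonempty because the residue field is now infinite. In a domain, a nonzero $x \in I_2$ is a nonzerodivisor, so $R/xR$ is local of dimension $d - 1$. By the classical projection formula for mixed multiplicities,
\begin{align*}
e(I_1^{[1]}, I_2^{[1]}, \ldots, I_d^{[1]}; R) &= e(\bar I_1^{[1]}, \bar I_3^{[1]}, \ldots, \bar I_d^{[1]}; R/xR), \\
e(J_1^{[1]}, I_2^{[1]}, \ldots, I_d^{[1]}; R) &= e(\bar J_1^{[1]}, \bar I_3^{[1]}, \ldots, \bar I_d^{[1]}; R/xR),
\end{align*}
where $\bar K := (K + xR)/xR$. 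Since $\bar I_1 \subseteq \bar J_1$ and the remaining ideals coincide in $R/xR$, the inductive hypothesis (applied in dimension $d-1$ to the $d-1$ ideals $\bar I_1, \bar I_3, \ldots, \bar I_d$ and $\bar J_1, \bar I_3, \ldots, \bar I_d$) closes the induction.

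The main obstacle is producing a single $x \in I_2$ that is simultaneously superficial for both the $I$-side and $J$-side projections; this is precisely what forces the reduction to infinite residue field via the Nagata extension, since the argument requires intersecting finitely many dense Zariski open subsets of $I_2 / m_R I_2$ and concluding that the intersection is nonempty. A secondary technical point is verifying that the reduction to $R$ a domain via additivity over top-dimensional minimal primes is compatible with the hypothesis $I_i \subseteq J_i$: it is, because every $m_R$-primary ideal remains $m_{R/\mathfrak{p}}$-primary upon passing to $R/\mathfrak{p}$ for any minimal prime $\mathfrak{p}$, and the induced inclusions of quotient ideals persist.
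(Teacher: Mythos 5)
Your proof is correct and follows essentially the same outline as the paper: telescope to reduce to a single inclusion, pass to an infinite residue field, and induct on $d$ by projecting modulo a general (superficial) element of one of the unchanged ideals. The additional reduction to the domain case via additivity over top-dimensional minimal primes is harmless but unnecessary, since the projection formula for mixed multiplicities holds for a general superficial element in any local ring without requiring it to be a nonzerodivisor.
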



\begin{proof} It is enough to prove the statement when $I_1 = J_1,\ldots,I_{d-1} = J_{d-1}$, and $I_d \subseteq J_d$. If $d = 1$, then $\ell(R/I_1^n) \geq \ell(R/J_1^n)$, so necessarily $e(I_1) \geq e(J_1)$. Now let $d> 1.$ We may assume that $R$ has an infinite residue field. Then for a general  element $a \in I_1 = J_1,$ 
 $e(I_1^{[1]},\ldots,I_d^{[1]};R) = e(I_2^{[1]},\ldots,I_d^{[1]};R/(a))$ and $e(J_1^{[1]},\ldots,J_d^{[1]};R) = e(J_2^{[1]},\ldots,J_d^{[1]}; R/(a)).$ We are done by induction on $d.$
\end{proof}

	\begin{Definition}\label{trunc}
	Suppose that $\mathcal I=\{I_i\}$  is a filtration of ideals on a local ring $R$. Fix $a\in \ZZ_+.$ The $a$-th {\it truncated  filtration} $\mathcal I_a=\{I_{a,i}\}$ of $\mathcal I$ is defined  by 
	\[ I_{a,n}= \left\{
	\begin{array}{l l}
	~~~~~I_n & \quad \text{if $n\le a$ }\\ \vspace{0.3mm}\\
	\sum\limits_{\substack{i,j>0\\i+j=n}} I_{a,i}I_{a,j} & \quad \text{if $n>a.$ }
	\end{array} \right.\] 	
\end{Definition}

\begin{Lemma}\label{FiltIneq}
	Let $R$  be a $d$-dimensional  local ring, $\mathfrak a$ be an $m_R$-primary ideal, $\mathcal I(i)=\{I(i)_n\}$ and  $\mathcal J(i)=\{J(i)_n\}$ be filtrations of ideals on $R$  for $1\leq i\leq r$ with $J(i)_n\subset I(i)_n$ for all $1\leq i\leq r$  and $n\geq 1.$ Let $N$ be a finitely generated $R$-module.
	\begin{itemize}
	\item[$(i)$] Suppose $\dim N(\hat R)<d$ and $\mathcal I(i),$  $\mathcal J(i)$ are filtrations of $R$ by $m_R$-primary ideals.  Then $$e_R(\mathcal I(1)^{[d_1]},\ldots, \mathcal I(r)^{[d_r]};N)\leq e_R(\mathcal J(1)^{[d_1]},\ldots, \mathcal J(r)^{[d_r]};N).$$
	\item[$(ii)$] Suppose $R$ is analytically unramified and $\max\{s(\mathcal J(1)),\ldots,s(\mathcal J(r)\}\le s\le d.$ Then 
	$$e_s(\mathfrak a,\mathcal I(1)^{[d_1]},\ldots, \mathcal I(r)^{[d_r]};N)\leq e_s(\mathfrak a,\mathcal J(1)^{[d_1]},\ldots, \mathcal J(r)^{[d_r]};N).$$ 
	\end{itemize}
	\end{Lemma}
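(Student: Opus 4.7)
Plan: For (ii), we reduce to (i) via the associativity formula (\ref{M9}) of Theorem \ref{Theorem5}. The sum in (\ref{M9}) ranges over primes $\mathfrak p$ with $\dim R/\mathfrak p=s$ and $\dim R_\mathfrak p=d-s$. Since $\mathcal J(i)_\mathfrak p\subseteq\mathcal I(i)_\mathfrak p$, triviality of $\mathcal J(i)_\mathfrak p$ forces that of $\mathcal I(i)_\mathfrak p$; the converse need not hold. If $\mathcal I(i)_\mathfrak p$ is trivial and $d_i>0$, then $e_{R_\mathfrak p}(\mathcal I(1)_\mathfrak p^{[d_1]},\ldots;N_\mathfrak p)=0$ by (\ref{M6}) and is bounded above by the nonnegative quantity $e_{R_\mathfrak p}(\mathcal J(1)_\mathfrak p^{[d_1]},\ldots;N_\mathfrak p)$. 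If $\mathcal I(i)_\mathfrak p$ is trivial and $d_i=0$, we may drop the $i$-th slot and induct on $r$. In the remaining case every $\mathcal I(i)_\mathfrak p$ with $d_i>0$ is $m_{R_\mathfrak p}$-primary (hence so is the corresponding $\mathcal J(i)_\mathfrak p$), and (i) applies on $R_\mathfrak p$, which is analytically unramified by \cite[Proposition~9.1.4]{HS}. Summing with the nonnegative weights $e_\mathfrak a(R/\mathfrak p)$ gives (ii).

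For (i), the containment $J(1)_{mn_1}\cdots J(r)_{mn_r}\subseteq I(1)_{mn_1}\cdots I(r)_{mn_r}$ gives $\ell_R(N/J(1)_{mn_1}\cdots J(r)_{mn_r}N)\ge\ell_R(N/I(1)_{mn_1}\cdots I(r)_{mn_r}N)$; dividing by $m^d/d!$ and letting $m\to\infty$ yields the pointwise inequality $P_{\mathcal J}(\vec n)\ge P_{\mathcal I}(\vec n)$ on $\NN^r$ for the polynomials of (\ref{M2}). This does not directly give the desired coefficient-wise comparison of mixed multiplicities: pointwise inequality of two homogeneous polynomials of the same degree does not in general entail inequality of like coefficients, even for polynomials with nonnegative coefficients (for instance $t\le t^2$ on $\NN$ has strictly larger coefficient of $t$ on the left). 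Additional structure is required.

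The plan is to reduce to the Noetherian case by truncation (Definition \ref{trunc}). A straightforward induction on $n$ gives $\mathcal J(i)_a\subseteq\mathcal I(i)_a$ for every $a\ge1$, and each truncation is a Noetherian $m_R$-filtration. Choose $b$ sufficiently divisible that the $b$-th Veronese subalgebras of each of the Rees algebras $R[\mathcal I(i)_a]$ and $R[\mathcal J(i)_a]$ are generated in degree one over $R$; then $I(i)_{a,bn}=I(i)_{a,b}^{n}$ and $J(i)_{a,bn}=J(i)_{a,b}^{n}$ for all $i$ and all $n\ge1$. Substituting $m=bk$ in the defining limit (\ref{M2}) for $\mathcal I_a$ and comparing with (\ref{M3}) then yields
\[
e_R(\mathcal I(1)_a^{[d_1]},\ldots,\mathcal I(r)_a^{[d_r]};N)=b^{-d}\,e(I(1)_{a,b}^{[d_1]},\ldots,I(r)_{a,b}^{[d_r]};N),
\]
and similarly for $\mathcal J_a$. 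Since $J(i)_{a,b}\subseteq I(i)_{a,b}$ are $m_R$-primary ideals, Lemma \ref{geq}, applied with each ideal repeated $d_i$ times in a $d$-tuple, gives the coefficient-wise inequality at every level $a$.

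Finally, let $a\to\infty$. Applying the single-filtration approximation theorem of \cite{C1} and \cite{CSS} at each fixed $\vec n\in\NN^r$ to the product $m_R$-filtrations $\{I(1)_{mn_1}\cdots I(r)_{mn_r}\}_m$ (which eventually agree with $\{I(1)_{a,mn_1}\cdots I(r)_{a,mn_r}\}_m$ termwise once $a$ is large relative to $m$) yields the pointwise convergence $P_{\mathcal I_a}(\vec n)\to P_{\mathcal I}(\vec n)$ on $\NN^r$; since both are homogeneous polynomials of degree $d$ in $r$ variables and $\NN^r$ is unisolvent for such polynomials, the coefficients converge, proving (i). This last convergence step is the main technical hurdle of the argument.
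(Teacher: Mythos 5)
Your approach is essentially the paper's: truncate to Noetherian filtrations, pass to a sufficiently divisible Veronese so each truncation becomes an $I$-adic filtration, apply Lemma~\ref{geq}, and let $a\to\infty$ for part (i); then use the associativity formula of Theorem~\ref{Theorem5} to reduce part (ii) to part (i). Your explicit remark that pointwise inequality of the homogeneous polynomials $P_{\mathcal I}$ and $P_{\mathcal J}$ on $\NN^r$ does not by itself yield a coefficient-wise comparison is correct and well worth saying; the paper's proof also routes around this via truncation.

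Two points need tightening. First, the final convergence step in (i): the paper simply invokes \cite[Proposition~6.2]{CSS}, which directly establishes $e_R(\mathcal I(1)^{[d_1]},\ldots,\mathcal I(r)^{[d_r]};N)=\lim_{a\to\infty}e_R(\mathcal I_a(1)^{[d_1]},\ldots,\mathcal I_a(r)^{[d_r]};N)$. Your parenthetical justification --- that the truncated product filtration ``eventually agrees termwise'' with the product filtration ``once $a$ is large relative to $m$'' --- does not establish this. For fixed $a$ the two sequences diverge for $m$ large, so no conclusion about the limit over $m$ follows from termwise agreement on an $a$-dependent initial segment; moreover the truncation of the product filtration $\{I(1)_{mn_1}\cdots I(r)_{mn_r}\}_m$ is not the product of the truncations $\{I(1)_{a,mn_1}\cdots I(r)_{a,mn_r}\}_m$, so the single-filtration truncation theorem does not apply to either sequence in the way your parenthesis suggests. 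The clean route is to cite the multigraded truncation result \cite[Proposition~6.2]{CSS} directly, as the paper does (your unisolvence observation about $\NN^r$ is then unnecessary, since that proposition is stated at the level of mixed multiplicities). Second, in the reduction of (ii) to (i), the case ``$\mathcal I(i)_{\mathfrak p}$ trivial, $\mathcal J(i)_{\mathfrak p}$ nontrivial, $d_i>0$'' is disposed of by the inequality $0\le e_{R_\mathfrak p}(\mathcal J(1)_{\mathfrak p}^{[d_1]},\ldots,\mathcal J(r)_{\mathfrak p}^{[d_r]};N_\mathfrak p)$; you should note that nonnegativity of the mixed multiplicities of $m$-primary filtrations is needed here (it follows from the same truncation argument, since they are limits of classical nonnegative mixed multiplicities). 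With those repairs the proposal matches the paper's argument; your spelling out of the case analysis in (ii), which the paper leaves as ``follows from Theorem~\ref{Theorem5} and part (i)'', is a useful amplification.
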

\begin{proof}
$(i)$ For all $a\in \ZZ_+$, consider the $a$-th truncated filtrations  $\mathcal I_a(i)=\{I_a(i)_m\}$  and  $\mathcal J_a(i)=\{J_a(i)_m\}$ for all $1\leq i\leq r$.  Given $a\in \ZZ_+$, there exists $f_a\in \ZZ_+$ such that $I_a(i)_{f_am}=(I_a(i)_{f_a})^m$ and $J_a(i)_{f_am}=(J_a(i)_{f_a})^m$ for all $m\ge 0$ and $i=1,\ldots,r$. Define  filtrations of $R$ by $m_R$-primary ideals by $\tilde{\mathcal I}_a(i)=\{I_a(i)_{f_am}\}$ and  $\tilde{\mathcal J}_a(i)=\{J_a(i)_{f_am}\}$ for all $1\leq i\leq r$. Then  by \cite[Proposition 6.2]{CSS}, \cite[Lemma 3.2]{CSS} and Lemma \ref{geq}, 
	\begin{eqnarray*}
		e_R(\mathcal I(1)^{[d_1]},\ldots, \mathcal I(r)^{[d_r]};N)&=&\lim\limits_{a\to\infty}e_R(\mathcal I_a(1)^{[d_1]},\ldots, \mathcal I_a(r)^{[d_r]};N)	\\&=& \lim\limits_{a\to\infty}\frac{1}{f_a^d}e_R(\tilde{\mathcal I}_a(1)_1^{[d_1]},\ldots,\tilde{\mathcal I}_a(r)_1^{[d_r]};N)\\&\leq & \lim\limits_{a\to\infty}\frac{1}{f_a^d}e_R(\tilde{\mathcal J}_a(1)_1^{[d_1]},\ldots,\tilde{\mathcal J}_a(r)_1^{[d_r]};N)\\&=&\lim\limits_{a\to\infty}e_R(\mathcal J_a(1)^{[d_1]},\ldots, \mathcal J_a(r)^{[d_r]};N)	\\&=& 	e_R(\mathcal J(1)^{[d_1]},\ldots, \mathcal J(r)^{[d_r]};N).
	\end{eqnarray*}
\\	$(ii)$ This follows from Theorem \ref{Theorem5} and part $(i)$ of this Lemma.
\end{proof}

  In \cite[Section 1]{Li} and \cite[Proposition 3.11]{HIO}, a multiplicity $e(\mathfrak a,I)$ is defined for ideals $\mathfrak a$ and $I$ in a local ring $R$ such that $\mathfrak a+I$ is $m_R$-primary, by
 \begin{equation}\label{Lieq1}
 e(\mathfrak a,I)=\sum_{\mfp}e_{R_{\mfp}}(I_{\mfp})e_{\mathfrak a}(R/\mfp),
\end{equation}
 where the sum is over prime ideals $\mfp$ in $R$ which contain $I$ and such that
 $$
 \dim R/\mfp=\dim R/I\mbox{ and }\dim R_{\mfp}=\dim R-\dim R/I.
 $$
 We generalize equation (\ref{Lieq1}) to filtrations. Suppose that $R$ is an analytically unramified local ring of dimension $d$ and $\mathcal I$ is a filtration of ideals on $R.$ Let $s=s(\mathcal I).$ Suppose  $\mathfrak a$ is an ideal in $R$ such that  $\mathfrak a+I_1$ is $\mR$-primary. 
 We define
 	\begin{equation}\label{Lieq2}
	e(\mathfrak a, \mathcal I)=\sum_{\mathfrak p}e_{R_{\mathfrak p}}(\mathcal I_{\mathfrak p})e_{\mathfrak a}(R/\mathfrak p),
	\end{equation}
	 where the sum is over all $\mathfrak p\in{\Spec(R)}$ such that $\dim R/\mathfrak p=s$ and $\dim R_{\mathfrak p}=d-s$.
	 
We have the following formula relating equations (\ref{Lieq1}) and (\ref{Lieq2}):

\begin{equation}\label{Lieq3}
e(\mathfrak a,\mathcal I)=
\lim_{m\to\infty}\frac{e(\mathfrak a,  I_m)}{m^{d-s}}.
 	\end{equation}
To prove this formula, we observe that 
for all $m,n \geq 1,$ $$\Min(R/I_n)\cap\{p\in{\Spec(R): \dim R/\mathfrak p=s\mbox{ and }\dim R_{\mathfrak p}=d-s}\}$$ $$=\Min(R/I_m)\cap\{p\in{\Spec(R): \dim R/\mathfrak p=s\mbox{ and }\dim R_{\mathfrak p}=d-s}\},$$ so that the limit 
 	\begin{align*}\lim_{m\to\infty}\frac{e(\mathfrak a,  I_m)}{m^{d-s}}&=\sum_{\mathfrak p}\lim\limits_{m\to\infty}\frac{e_{R_{\mathfrak p}}( I_mR_{\mathfrak p})}{m^{d-s}}e_{\mathfrak a}(R/\mathfrak p)
 	\\&=\sum_{\mathfrak p}\lim\limits_{n\to\infty}\frac{\ell_{R_{\mfp}}(R_\mfp/I_nR_\mfp)}{n^{d-s}/(d-s)!}e_{\mathfrak a}(R/\mathfrak p)\\&=\sum_{\mathfrak p}e_{R_{\mathfrak p}}(\mathcal I_{\mathfrak p})e_{\mathfrak a}(R/\mathfrak p)
	 	\end{align*} exists, where the  second equality follows from \cite[Theorem 6.5]{C1} and the sum is over all $\mathfrak p\in{\Spec(R)}$ such that $\dim R/\mathfrak p=s$ and $\dim R_{\mathfrak p}=d-s.$

 Let $\mathcal I$ be a filtration of ideals on a local ring $R$. We say $x_1,\ldots,x_{s(\mathcal I)}$ is a system of parameters of $\mathcal I$ if $x_1+I_n,\ldots,x_{s(\mathcal I)}+I_n$ is a system of parameters of $R/I_n$ for each $n\geq 1.$ Note that if $x_1,\ldots,x_t$ is a part of system of parameters of $\mathcal I$ then $\{I_n+(x_1,\ldots,x_t)\}$
 is a filtration and thus $V(I_m+(x_1,\ldots,x_t))=V(I_n+(x_1,\ldots,x_t))$ for all $m,n\geq 1.$
 \begin{Remark}
 	Let $R$ be a Regular local ring of dimension $d\geq 1,$  $\mathcal I$ be a filtration of ideals on $R$ which is not an $\mR$-filtration. Then using the prime avoidance lemma, we can choose elements $x_i\in\mR\setminus \mR^2\bigcup \Min(R/I_1+(x_1,\ldots,x_{i-1})) $ for all $1\leq i\leq s(\mathcal I)$ and $x_0=0.$ Then $x_1,\ldots,x_{s(\mathcal I)}$ is a system of parameters of $\mathcal I$ such that $x_1,\ldots,x_{s(\mathcal I)}$ is a part of regular system of parameters of $R.$ Define $\underline x= x_1,\ldots,x_{s(\mathcal I)}$. Therefore $R/\underline xR$ is a regular local ring and hence $e_{R/\underline xR}(\mathcal J)$ is well-defined where $\mathcal J=\{J_n=I_n(R/\underline xR)\}.$
 \end{Remark}

  In \cite{HSV} and \cite[formula (2.1), page 118]{Li}, the following inequality is given. Suppose that $I$ is an ideal in a local ring $R$. Let $s=\dim R/I$. Suppose that $\underline x= x_1,\ldots,x_s$ is part of a  system of parameters in $R$ and that the image of $\underline x$  in $R/I$ is a system of parameters. Then 
 \begin{equation}\label{eqLie4}
 e(\underline xR,I)\le e_{R/\underline x R}(I).
 \end{equation}
 In the following proposition, we generalize this inequality to filtrations.

 \begin{Proposition}\label{PropUS}
 	Let $R$ be an analytically unramified local ring of dimension $d\geq 1,$  $\mathcal I$ be a filtration of ideals on $R$ which is not an $\mR$-filtration and $x_1,\ldots,x_{s(\mathcal I)}$ is a system of parameters of $\mathcal I$ such that $\dim(N(\widehat{R/\underline xR}))<\dim R/\underline xR$ (e.g. $R$ is a Regular local ring and $x_1,\ldots,x_{s(\mathcal I)}$ is a system of parameters of $\mathcal I$ such that $x_1,\ldots,x_{s(\mathcal I)}$ is a part of regular system of parameters of $R$) where $N(\widehat{R/\underline xR})$ is the nilradical of $R/\underline xR.$ Let ${\underline x}=x_1,\ldots,x_{s(\mathcal I)}.$ Then for  $\mathcal J=\{J_n=I_n(R/\underline xR)\},$ 
 	$$ e(\underline xR, \mathcal I)\leq e_{R/\underline xR}(\mathcal J).$$
 \end{Proposition}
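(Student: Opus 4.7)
The strategy is to reduce to the ideal-level inequality (\ref{eqLie4}), applied term-by-term to the filtration $\mathcal I$, and then pass to the limit using the formula (\ref{Lieq3}). Set $s=s(\mathcal I)$ and $k=d-s$.

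First I would verify the structural prerequisites. Because $\underline x$ is a system of parameters of $\mathcal I$, the ideal $\underline xR+I_m$ is $m_R$-primary for every $m\ge 1$; in particular, $\underline x$ extends to a system of parameters of $R$, so $\dim R/\underline xR=k$, and the image of $\underline x$ in $R/I_m$ is a system of parameters of the $s$-dimensional ring $R/I_m$. Consequently $J_m=I_m(R/\underline xR)$ is $m_{R/\underline xR}$-primary for each $m$.

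Next, for every $m\ge 1$, apply (\ref{eqLie4}) to the ideal $I_m$ to obtain
$$
e(\underline xR,I_m)\le e_{R/\underline xR}(J_m),
$$
divide by $m^k$, and let $m\to\infty$. For the left-hand side, since $R$ is analytically unramified and $\underline xR+I_1$ is $m_R$-primary, (\ref{Lieq3}) (applied with $\mathfrak a=\underline xR$) gives
$$
\lim_{m\to\infty}\frac{e(\underline xR,I_m)}{m^{k}}=e(\underline xR,\mathcal I).
$$
For the right-hand side, $\mathcal J$ is a filtration of $m_{R/\underline xR}$-primary ideals on the $k$-dimensional local ring $R/\underline xR$, which by hypothesis satisfies $\dim N(\widehat{R/\underline xR})<\dim R/\underline xR$; hence by \cite[Theorem 6.5]{C1} (equivalently, by (\ref{Lieq3}) applied inside $R/\underline xR$ with $\mathfrak a=m_{R/\underline xR}$) we have
$$
\lim_{m\to\infty}\frac{e_{R/\underline xR}(J_m)}{m^{k}}=e_{R/\underline xR}(\mathcal J).
$$
Combining the three displays yields $e(\underline xR,\mathcal I)\le e_{R/\underline xR}(\mathcal J)$, as claimed.

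The only delicate point is the verification that $\dim R/\underline xR=k$, so that the same normalization $m^k$ is correct on both sides and the quoted limit formulas apply with the exponent $k$. This is forced by the $m_R$-primariness of $\underline xR+I_1$ together with the catenary properties of analytically unramified local rings (and is automatic in the regular-ring instance spelled out in the remark preceding the proposition). Given that, the proof is essentially a routine passage to the limit from (\ref{eqLie4}).
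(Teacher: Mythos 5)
Your proof is correct, and it follows a genuinely different (and arguably cleaner) route than the paper's. You apply the ideal-level inequality (\ref{eqLie4}) directly to each $I_m$ and then take the limit over $m$, identifying the left-hand limit with $e(\underline xR,\mathcal I)$ via (\ref{Lieq3}) and the right-hand limit with $e_{R/\underline xR}(\mathcal J)$ via \cite[Theorem 6.5]{C1}. The paper instead uses the truncated Noetherian filtrations $\mathcal I_a$ of Definition \ref{trunc}: it cites \cite[Proposition 6.2]{CSS} to express $e(\underline xR,\mathcal I)$ as $\lim_a \frac{1}{f_a^{d-s}}e(\underline xR,I_{a,f_a})$, applies (\ref{eqLie4}) to the single ideal $I_{a,f_a}$, and then converts back to a filtration multiplicity by taking $a\to\infty$. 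The two arguments rest on the same ideal-level inequality but use different limiting mechanisms; yours avoids the truncation machinery altogether by leveraging (\ref{Lieq3}), which the paper establishes earlier but then does not directly invoke in its own proof. One advantage of the paper's truncation route is that it only applies (\ref{eqLie4}) to the single ideal $I_{a,f_a}$ rather than to every $I_m$, but this does not make a substantive difference here.

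Two small cautions. First, your appeal to ``(\ref{Lieq3}) applied inside $R/\underline xR$'' is slightly loose, since (\ref{Lieq3}) is derived under the hypothesis that the ring is analytically unramified, whereas $R/\underline xR$ is only assumed to satisfy $\dim N(\widehat{R/\underline xR})<\dim R/\underline xR$; the direct citation to \cite[Theorem 6.5]{C1} is the correct reference (and indeed that result holds under this weaker hypothesis, which is why the Proposition is stated with it). Second, your remark that $\dim R/\underline xR=k$ is ``forced by\ldots catenary properties of analytically unramified local rings'' overclaims a little: analytically unramified does not by itself yield catenary or equidimensional, so this equality is really an implicit assumption built into the way the statement is phrased. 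The paper's own proof has exactly the same implicit dependence (in the step converting $\frac{1}{f_a^{d-s}}e(I_{a,f_a}(R/\underline xR))$ into $e(\mathcal I_a(R/\underline xR))$, which silently uses $\dim R/\underline xR=d-s$), so this is a shared feature rather than a gap specific to your argument.
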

 
 \begin{proof}
 	Let $\mathcal I_a$ denote the $a$-th truncated filtration of $\mathcal I$ for all $a\geq 1$  and $\mfp$ be a prime ideal in $R$. Then $\mathcal I_a(R/\underline xR)=\{I_{a,n}(R/\underline xR)\}$ and $\mathcal I_aR_\mfp=\{I_{a,n}R_\mfp\}$ are the  $a$-th truncated filtrations of $\mathcal I_a(R/\underline xR)$ and $\mathcal I_aR_\mfp$ respectively for all $a\geq 1$.  Since for each $a\geq 1,$ $\mathcal I_a,$ $\mathcal I_a(R/\underline xR)$  and $\mathcal I_aR_\mfp$ are Noetherian filtrations there exists an integer $f_a\geq 1$ such that $I_{a,nf_a}= I_{a,f_a}^n,$ $I_{a,nf_a}(R/\underline xR)= I_{a,f_a}^n(R/\underline xR)$ and $I_{a,nf_a}R_\mfp= I_{a,f_a}^nR_\mfp$ for all $n\geq 1.$ Also note that $s(\mathcal I)=s(\{I_{a,f_a}^n\}).$
 	Then  summing over  prime ideals $\mfp$ with $\dim R/\mathfrak p=s(\mathcal I)$ and $\dim R_{\mathfrak p}=d-s(\mathcal I),$ we have 
 	\begin{align*}
 	 e(\underline xR, \mathcal I)=\sum_{\mathfrak p}e_{R_{\mathfrak p}}(\mathcal I_{\mathfrak p})e_{\underline xR}(R/\mathfrak p)&= \sum_{\mathfrak p}\lim_{a\to \infty}  e_{R_{\mathfrak p}}(\mathcal I_aR_{\mathfrak p})e_{\underline xR}(R/\mathfrak p)\\&= \sum_{\mathfrak p}\lim_{a\to \infty} \frac{1}{f_a^{d-s(\mathcal I)}} e_{R_{\mathfrak p}}( I_{a,f_a}R_{\mathfrak p})e_{\underline xR}(R/\mathfrak p)	\\&= \lim_{a\to \infty} \frac{1}{f_a^{d-s(\mathcal I)}}\sum_{\mathfrak p} e_{R_{\mathfrak p}}( I_{a,f_a}R_{\mathfrak p})e_{\underline xR}(R/\mathfrak p)\\&= \lim_{a\to \infty} \frac{1}{f_a^{d-s(\mathcal I)}} e(\underline xR, I_{a,f_a})\\&\leq  \lim_{a\to \infty} \frac{1}{f_a^{d-s(\mathcal I)}}e( I_{a,f_a}(R/\underline xR))\nonumber
 	\\&=\lim_{a\to \infty}e( \mathcal I_a(R/\underline xR))=e( \mathcal I(R/\underline xR))\nonumber
 	\end{align*} where  the equalities on the first and last lines are by \cite[Proposition 6.2]{CSS} and the  inequality follows from (\ref{eqLie4}). \end{proof}

\section{divisorial filtrations}\label{SecDiv}
Let $R$ be a  local domain of dimension $d$ with quotient field $K$.  Let $\nu$ be a discrete valuation of $K$ with valuation ring $\mathcal O_{\nu}$ and maximal ideal $m_{\nu}$.  Suppose that $R\subset \mathcal O_{\nu}$. Then for $n\in \NN$, define valuation ideals
$$
I(\nu)_n=\{f\in R\mid \nu(f)\ge n\}=m_{\nu}^n\cap R.
$$
 
  A divisorial valuation of $R$ (\cite[Definition 9.3.1]{HS}) is a valuation $\nu$ of $K$ such that if $\mathcal O_{\nu}$ is the valuation ring of $\nu$ with maximal ideal $\mathfrak m_{\nu}$, then $R\subset V_{\nu}$ and if $\mfp=\mathfrak m_{\nu}\cap R$ then $\mbox{trdeg}_{\kappa(\mfp)}\kappa(\nu)={\rm ht}(\mfp)-1$, where $\kappa(\mfp)$ is the residue field of $R_{\mfp}$ and $\kappa(\nu)$ is the residue field of $V_{\nu}$.  
 
 By \cite[Theorem 9.3.2]{HS}, the valuation ring of every divisorial valuation $\nu$ is Noetherian, hence is a  discrete valuation. 
 
 \begin{Lemma}
Suppose that  $R$ is an excellent local domain. Then a valuation $\nu$ of the quotient field $K$ of $R$ which is nonnegative on $R$ is a divisorial valuation of $R$ if and only if the valuation ring $\mathcal O_{\nu}$  is essentially of finite type over $R$. 
\end{Lemma}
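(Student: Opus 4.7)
The plan is to prove both implications, with the $\Leftarrow$ direction immediate from the Zariski dimension formula and the $\Rightarrow$ direction requiring the explicit construction of an essentially-finite-type subring of $\mathcal O_\nu$ whose normalization localizes to $\mathcal O_\nu$.

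For the $\Leftarrow$ direction, let $\mfp = m_\nu \cap R$ and $h = \height(\mfp)$. Excellence of $R$ makes it universally catenary, and since $R$ and $\mathcal O_\nu$ share the quotient field $K$, the Zariski dimension formula applied to a finitely generated $R$-algebra of which $\mathcal O_\nu$ is a localization gives
\[
\height(m_\nu) + \operatorname{trdeg}_{\kappa(\mfp)} \kappa(\nu) \;=\; h.
\]
As $\mathcal O_\nu$ is essentially of finite type over a Noetherian ring it is Noetherian, hence a DVR, so $\height(m_\nu) = 1$; the divisorial condition follows.

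For the $\Rightarrow$ direction, lift a transcendence basis of $\kappa(\nu)$ over $\kappa(\mfp)$ to units $\xi_1, \ldots, \xi_{h-1} \in \mathcal O_\nu$ and set $A := R[\xi_1, \ldots, \xi_{h-1}] \subset \mathcal O_\nu$, a finitely generated $R$-subalgebra of $K$. Putting $Q := m_\nu \cap A$, the composition $A \hookrightarrow \mathcal O_\nu \twoheadrightarrow \kappa(\nu)$ has image the polynomial ring $(R/\mfp)[\bar\xi_1, \ldots, \bar\xi_{h-1}]$, so $\kappa(Q) = \kappa(\mfp)(\bar\xi_1, \ldots, \bar\xi_{h-1})$ has transcendence degree exactly $h-1$ over $\kappa(\mfp)$. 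Any nonzero $\pi \in \mfp$ lies in $Q$, so $Q \neq 0$, and the dimension formula forces $\height(Q) = h - (h-1) = 1$.

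Let $\overline A$ be the integral closure of $A$ in $K$; excellence of $R$ makes $\overline A$ module-finite over $A$, hence essentially of finite type over $R$. Set $Q' := m_\nu \cap \overline A$, a height-one prime of $\overline A$ by incomparability. Then $\overline A_{Q'}$ is a normal one-dimensional local domain, i.e., a DVR, contained in $\mathcal O_\nu$ with common quotient field $K$. If this inclusion were strict, a uniformizer of $\overline A_{Q'}$ would have to become a unit in $\mathcal O_\nu$, forcing $\mathcal O_\nu = K$, which is absurd. Hence $\mathcal O_\nu = \overline A_{Q'}$ is essentially of finite type over $R$. The main obstacle is pinning down that the chosen $A$ has exactly the required prime $Q$ of height one: this needs the equality in the dimension formula (hence universal catenarity), and excellence is needed a second time to secure the finiteness of $\overline A$ over $A$.
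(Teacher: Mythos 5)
Your proof is correct. The $\Leftarrow$ direction coincides with the paper's argument: write $\mathcal O_\nu = S_Q$ with $S$ a finite-type $R$-subalgebra of $K$ and $Q$ a prime of $S$; then $\mathcal O_\nu$ is Noetherian, hence a DVR of dimension one, and the dimension formula (valid because excellent implies universally catenary, and $\mbox{trdeg}_R S = 0$ since $S\subset K$) gives $\mbox{trdeg}_{\kappa(\mfp)}\kappa(\nu)=\height(\mfp)-1$.

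For the $\Rightarrow$ direction you diverge: the paper merely observes that an excellent local domain is analytically unramified and cites Theorem~9.3.2 of Swanson--Huneke, whereas you unwind that citation into the underlying construction. You lift a transcendence basis of $\kappa(\nu)/\kappa(\mfp)$ to units of $\mathcal O_\nu$ to form $A=R[\xi_1,\ldots,\xi_{h-1}]\subset K$, observe that $Q=m_\nu\cap A$ is nonzero with residue field of transcendence degree $h-1$ over $\kappa(\mfp)$, invoke the dimension formula to get $\height(Q)=1$, pass to the normalization $\overline A$ (module-finite over $A$ by excellence), note $Q'=m_\nu\cap\overline A$ still has height one by incomparability, so $\overline A_{Q'}$ is a DVR contained in $\mathcal O_\nu$ with fraction field $K$, and conclude $\overline A_{Q'}=\mathcal O_\nu$ because a DVR admits no proper overring in its fraction field other than the field itself. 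This is essentially a proof of the cited theorem specialized to the excellent setting; what it buys is a self-contained argument that makes explicit where each hypothesis enters --- universal catenarity for the dimension formula in both directions, and excellence again for the finiteness of the normalization.
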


\begin{proof} 
Since an excellent local domain is analytically unramified, the only if direction follows from \cite[Theorem 9.3.2]{HS}.
Now we establish the if direction. Since $\mathcal O_{\nu}$ is essentially of finite type over $R$, there exists a finite type $R$-algebra $S$ and a prime ideal $Q$ in $S$ such that  $S$ is a sub $R$-algebra of $\mathcal O_{\nu}$ and $S_Q=\mathcal O_{\nu}$.  Since an excellent local domain is universally catenary, the  dimension equality (c.f. \cite[Theorem B.3.2.]{HS}) holds. Since a Noetherian valuation ring is a discrete valuation ring (c.f. \cite[Corollary 6.4.5]{HS}) it has dimension 1, so that ${\rm ht}(Q)=1$, from which it follows that  $\nu$ is a divisorial valuation.
\end{proof}


 Suppose that $s\in \NN$.
An $s$-valuation of $R$ is a divisorial valuation of $R$ such that $\dim R/p=s$ where $p=m_{\nu}\cap R$.

A divisorial filtration of $R$ is a filtration $\mathcal I=\{I_m\}$ such that  there exist divisorial valuations $\nu_1,\ldots,\nu_r$ and $a_1,\ldots,a_r\in \RR_{\ge 0}$ such that for all $m\in \NN$,
$$
I_m=I(\nu_1)_{\lceil ma_1\rceil }\cap\cdots\cap I(\nu_r)_{\lceil ma_r\rceil}.
$$
A divisorial filtration is called integral (rational) if $a_i\in  \ZZ_{\ge 0}$ for all $i$ ($a_i\in \QQ_{\ge 0}$ for all $i$).

An $s$-divisorial filtration of $R$ is a filtration $\mathcal I=\{I_m\}$ such that  there exist $s$-valuations $\nu_1,\ldots,\nu_r$ and $a_1,\ldots,a_r\in \RR_{\ge 0}$ such that for all $m\in \NN$,
\begin{equation}\label{eqDF}
I_m=I(\nu_1)_{\lceil ma_1\rceil }\cap\cdots\cap I(\nu_r)_{\lceil ma_r\rceil}.
\end{equation}

 Observe that the trivial filtration $\mathcal I=\{I_m\}$, defined by $I_m=R$ for all $m$, is a degenerate case of a divisorial filtration and is a degenerate case of an $s$-divisorial filtration for all $s$.
The  nontrivial $0$-divisorial filtrations are the  divisorial $m_R$-filtrations of \cite{C3}.

 We will often denote a divisorial filtration $\mathcal I$ on a local domain $R$ by $\mathcal I=\mathcal I(D)$, even when $R$ is not excellent and there does not exist a representation of  $\mathcal I$ as defined before Theorem \ref{Theorem9}.

 Even when that $a_i$ are required to be positive for all $i$, the expression (\ref{eqDF}) of a divisorial filtration is far from unique. The following example follows from \cite[Theorem 4.1]{C5}.


\begin{Example} There exist $0$-valuations $\nu_1$ and $\nu_2$ on a normal 
 3-dimensional local ring $R$ which is essentially of finite type over an arbitrary algebraically closed field $k$, such that if $a_1,a_2,b_1,b_2\in \NN$ and 
$a_1<\left(\frac{3}{9-\sqrt{3}}\right)a_2$, then 
$$
I(\nu_1)_{\lceil ma_1\rceil}\cap I(\nu_2)_{\lceil ma_2\rceil}=I(\nu_1)_{\lceil mb_1\rceil}\cap I(\nu_2)_{\lceil mb_2\rceil}
$$
for all $m\in \NN$ if and only if $b_2=a_2$ and $b_1<\left(\frac{3}{9-\sqrt{3}}\right)a_2$. The filtration has the largest expression 
as a real divisorial filtration  as $\left\{I(\nu_1)_{\lceil m\left(\frac{3}{9-\sqrt{3}}\right)a_2\rceil}\cap I(\nu_2)_{\lceil ma_2\rceil}\right\}$.  This divisorial filtration is not rational.
\end{Example}

\begin{Lemma}\label{LemmaICD}
If $\mathcal I(D)$ is a divisorial filtration, then $R[\mathcal I(D)]=\overline{R[\mathcal I(D)]}$ is integrally closed. 
\end{Lemma}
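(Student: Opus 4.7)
The plan is to realize $R[\mathcal I(D)]$ as an intersection of $R[t]$ with certain valuation rings inside $K(t)$, where $K=\mathrm{QF}(R)$, and then to deduce integral closedness from the fact that valuation rings are integrally closed in their quotient fields.

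For each divisorial valuation $\nu_i$ on $K$ and each $a_i \in \RR_{\ge 0}$ appearing in the representation of $\mathcal I(D)$, I would construct the Gauss extension $\mu_i$ of $\nu_i$ to $K(t)$ by declaring $\mu_i(t)=-a_i$. Concretely, for a polynomial $f = \sum_j f_j t^j \in K[t]$, set
$$
\mu_i(f) = \min_j\bigl(\nu_i(f_j) - j a_i\bigr) \in \RR,
$$
and extend to $K(t)$ by $\mu_i(f/g) = \mu_i(f)-\mu_i(g)$. The resulting $\mu_i$ is a valuation on $K(t)$ with value group $\ZZ + a_i\ZZ \subset \RR$: the additivity $\mu_i(fg)=\mu_i(f)+\mu_i(g)$ is the standard Gauss argument, and is cleanest when $a_i \notin \QQ$ (the minimum in the definition is then uniquely attained, since $\nu_i(f_j)-ja_i=\nu_i(f_k)-ka_i$ with $j \neq k$ would force $a_i \in \QQ$); in the rational case $a_i=p/q$ one clears denominators and recovers a classical $\ZZ$-valued Gauss valuation. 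Denote by $V_{\mu_i}$ the corresponding valuation ring in $K(t)$.

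With these valuations in hand, I would observe that for $g = \sum_j g_j t^j \in R[t]$ the condition $\mu_i(g) \ge 0$ is equivalent to $\nu_i(g_j) \ge j a_i$ for all $j$, which, since $\nu_i(g_j) \in \ZZ$, is equivalent to $g_j \in I(\nu_i)_{\lceil j a_i \rceil}$ for all $j$. Intersecting over $i=1,\ldots,r$ yields
$$
R[\mathcal I(D)] = R[t] \cap V_{\mu_1} \cap \cdots \cap V_{\mu_r}.
$$
Now suppose $g \in R[t]$ is integral over $R[\mathcal I(D)]$. Since $R[\mathcal I(D)] \subset V_{\mu_i}$ for each $i$, the element $g$ is integral over every $V_{\mu_i}$; because $V_{\mu_i}$ is integrally closed in its quotient field $K(t) \ni g$, one concludes $g \in V_{\mu_i}$ for all $i$, and therefore $g \in R[\mathcal I(D)]$. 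This proves $\overline{R[\mathcal I(D)]}=R[\mathcal I(D)]$.

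The chief technical obstacle is the careful verification that the Gauss extension $\mu_i$ is genuinely a valuation on $K(t)$, especially when $a_i$ is irrational and the value group lies densely in $\RR$ rather than in $\ZZ$; once that standard construction is in place, the remainder of the argument is essentially formal, resting only on the integral closedness of valuation rings in their quotient fields.
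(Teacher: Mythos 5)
Your argument is correct, and the key identity
$$
R[\mathcal I(D)] \;=\; R[t]\,\cap\, V_{\mu_1}\cap\cdots\cap V_{\mu_r}
$$
does hold: for $g=\sum_j g_jt^j\in R[t]$ one has $g\in V_{\mu_i}$ iff $\nu_i(g_j)\ge ja_i$ for all $j$, and since $\nu_i(g_j)\in\ZZ$ this is the same as $g_j\in I(\nu_i)_{\lceil ja_i\rceil}$. The Gauss extensions $\mu_i$ are genuine valuations on $K(t)$ (the multiplicativity $\mu_i(fg)=\mu_i(f)+\mu_i(g)$ follows from the usual leading-term argument applied to the extreme index where the minimum is attained, and works for rational and irrational $a_i$ alike), so the final step — that an element of $R[t]$ integral over the intersection lies in each $V_{\mu_i}$ and hence back in $R[\mathcal I(D)]$ — is sound.

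This is, however, a genuinely different route from the one the paper takes. The paper defers to \cite[Lemma 5.7]{C3}, and the argument there goes through Lemma \ref{BdLemma1}: one first describes $\overline{R[\mathcal I]}$ as $\sum_m J_m t^m$ with $J_m=\{f\in R\mid f^r\in\overline{I_{rm}}\text{ for some }r>0\}$, then observes that each $I(mD)=\bigcap_i I(\nu_i)_{\lceil ma_i\rceil}$ is itself integrally closed (valuation ideals are integrally closed, and so are intersections of such), and finally that $f^r\in I(rmD)$ forces $\nu_i(f)\ge ma_i$ for all $i$, i.e.\ $J_m=I(mD)$. That route stays entirely inside $R$ and works coefficient-by-coefficient; yours lifts the whole Rees algebra into $K(t)$ and reduces integral closedness to the tautology that valuation rings are integrally closed in their fraction field, at the cost of having to construct and verify the Gauss valuations. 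Both are clean; your version is perhaps more conceptual, while the paper's is more elementary and parallels the machinery (Lemma \ref{BdLemma1}) already in place.
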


The proof of Lemma \ref{LemmaICD}  for $m_R$-filtrations in \cite[Lemma 5.7]{C3} extends immediately to arbitrary divisorial filtrations. 

We will use the following form of the   valuative criterion of properness. The proposition is an immediate   consequence of  \cite[Theorem II.4.7]{Ha}.

\begin{Proposition}\label{valcri} Suppose that $R$ is a Noetherian  domain with quotient field $K$ and that $I$ is a nonzero  ideal of $R$. Let $\pi:X\rightarrow \mbox{Spec}(R)$ be the blow up of $I$. Suppose that $\mathcal O_{\nu}$ is a valuation ring of $K$ such that $R\subset \mathcal O_{\nu}$. Let $\mathfrak p=m_{\nu}\cap R$. Then there exists a unique (not necessarily closed) point $\alpha$ of $X$ such that $\mathcal O_{\nu}$ dominates $S=\mathcal O_{X,\alpha}$. We have that $S$ dominates $R_{\mathfrak p}$; that is, $\pi(\alpha)=\mathfrak p\in \mbox{Spec}(R)$.
\end{Proposition}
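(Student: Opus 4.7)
The plan is to deduce this from the valuative criterion applied to the blow-up morphism $\pi:X\to \Spec(R)$, which is projective (since $X=\mathrm{Proj}(\bigoplus_{n\ge 0}I^n)$) and hence proper.

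First I would construct the morphism $\Spec(K)\to X$. Since $R$ is a domain and $I$ is a nonzero ideal, $\pi$ is birational: on the complement of $V(I)$ it is an isomorphism, so the generic point of $X$ maps to the generic point of $\Spec(R)$, and the local ring of $X$ at its generic point is $K$. This gives a canonical morphism $\eta:\Spec(K)\to X$ such that $\pi\circ\eta$ is the natural morphism $\Spec(K)\to\Spec(R)$ coming from the inclusion $R\subset K$. On the other hand, the inclusions $R\subset\mathcal O_{\nu}\subset K$ give a morphism $\rho:\Spec(\mathcal O_{\nu})\to\Spec(R)$ whose restriction to the generic point $\Spec(K)$ is the natural one.

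Next I would apply Hartshorne's valuative criterion of properness \cite[Theorem II.4.7]{Ha} to $\pi$, $\eta$, and $\rho$. Because $\pi$ is proper and $\mathcal O_{\nu}$ is a valuation ring of $K$, there is a unique morphism $\widetilde\rho:\Spec(\mathcal O_{\nu})\to X$ with $\pi\circ\widetilde\rho=\rho$ and whose restriction to $\Spec(K)$ is $\eta$. Define $\alpha\in X$ to be the image under $\widetilde\rho$ of the closed point of $\Spec(\mathcal O_{\nu})$. Uniqueness of $\widetilde\rho$ gives uniqueness of $\alpha$ (any other point $\alpha'$ with $\mathcal O_{\nu}$ dominating $\mathcal O_{X,\alpha'}$ would yield a second morphism $\Spec(\mathcal O_{\nu})\to X$ restricting to $\eta$ on the generic point, contradicting uniqueness).

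Finally, the morphism $\widetilde\rho$ induces a local homomorphism $\mathcal O_{X,\alpha}\to \mathcal O_{\nu}$, which is precisely the statement that $\mathcal O_{\nu}$ dominates $S=\mathcal O_{X,\alpha}$. Composing with $\pi$, the equation $\pi\circ\widetilde\rho=\rho$ shows that $\pi(\alpha)$ is the image of the closed point of $\Spec(\mathcal O_{\nu})$ under $\rho$, which is the contraction of $m_{\nu}$ to $R$, namely $\mathfrak p$. The resulting local homomorphism $R_{\mathfrak p}\to\mathcal O_{X,\alpha}$ exhibits $S$ as dominating $R_{\mathfrak p}$. There is no real obstacle: the argument is essentially a translation of the valuative criterion once one checks that $\pi$ is proper and birational so that the needed morphism from $\Spec(K)$ to $X$ exists.
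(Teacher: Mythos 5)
Your argument is correct, and it is essentially the same approach the paper takes: the paper simply cites Hartshorne's Theorem II.4.7 as yielding the proposition immediately, and your proof just spells out that reduction (properness of the blow-up, the canonical map $\Spec(K)\to X$ from birationality, existence and uniqueness of the lift $\Spec(\mathcal O_{\nu})\to X$, and reading off domination and $\pi(\alpha)=\mathfrak p$).
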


\begin{Lemma}\label{Lemma0} Let $R$ be an excellent local domain and $\nu_1,\ldots,\nu_t$ be $s$-valuations of $R$ with associated centers $\mathfrak p_i=m_{\nu_i}\cap R$ on $R$  for all $1\leq i\leq t$. Then there exists an ideal $K$ of $R$ such that the associated primes of $R$ are the $\mathfrak p_i$ and if $\phi:X\rightarrow \mbox{Spec}(R)$ is the normalization of the blowup of $K$, then there exist prime Weil divisors $E_1,\ldots,E_t$ on $X$ such that $\mathcal O_{X,E_i}=\mathcal O_{\nu_i}$ for $1\le i\le t$.
\end{Lemma}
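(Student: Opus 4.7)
The strategy is to reduce to the previously established $m_R$-primary case by localizing at each center $\mathfrak p_i$, construct a suitable $\mathfrak p_i$-primary ideal $K_i$ for each valuation, and then combine these by intersection. The key structural input is that the primes $\mathfrak p_i$ are pairwise incomparable: since $R$ is an excellent (hence universally catenary) local domain, a strict containment $\mathfrak p_i \subsetneq \mathfrak p_j$ would force $\dim R/\mathfrak p_j < \dim R/\mathfrak p_i$, contradicting $\dim R/\mathfrak p_i = s = \dim R/\mathfrak p_j$. This incomparability is what makes the intersection argument clean both for the primary decomposition and for the base change to each $R_{\mathfrak p_i}$.

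For each $i$, observe that $R_{\mathfrak p_i}$ is an excellent local domain and that $\nu_i$, viewed as a valuation of $R_{\mathfrak p_i}$, is divisorial with center the maximal ideal; i.e., it is a (nontrivial) $0$-valuation in the sense of this paper. By the case of divisorial $m_R$-filtrations treated in \cite{C3}, one can find a $\mathfrak p_iR_{\mathfrak p_i}$-primary ideal $L_i$ such that on the normalized blowup of $L_i$ there is a prime Weil divisor whose local ring equals $\mathcal O_{\nu_i}$. Set $K_i := L_i \cap R$, which is a $\mathfrak p_i$-primary ideal of $R$ with $K_i R_{\mathfrak p_i} = L_i$, and let
$$
K := K_1 \cap \cdots \cap K_t.
$$
Pairwise incomparability gives $K_j R_{\mathfrak p_i} = R_{\mathfrak p_i}$ for $j \ne i$, so $K R_{\mathfrak p_i} = L_i$ for every $i$; this simultaneously shows that the intersection is an irredundant primary decomposition, whence $\mathrm{Ass}(R/K) = \{\mathfrak p_1, \ldots, \mathfrak p_t\}$.

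Finally, let $\phi : X \to \mbox{Spec}(R)$ be the normalization of the blowup of $K$, and fix $i$. Since blowup and (for excellent rings) normalization both commute with flat base change, the preimage $\phi^{-1}(\mbox{Spec}(R_{\mathfrak p_i}))$ is precisely the normalized blowup of $K R_{\mathfrak p_i} = L_i$. By the valuative criterion, Proposition \ref{valcri}, there is a unique point $\alpha_i \in X$ at which $\mathcal O_{\nu_i}$ dominates $\mathcal O_{X, \alpha_i}$, and $\phi(\alpha_i) = \mathfrak p_i$; our choice of $L_i$ identifies $\alpha_i$ as a codimension-one point with $\mathcal O_{X, \alpha_i} = \mathcal O_{\nu_i}$, and we take $E_i = \overline{\{\alpha_i\}}$. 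The principal technical obstacle is justifying that the normalization of the blowup commutes with the flat base change $R \to R_{\mathfrak p_i}$; this is standard for excellent rings because normalization is then a finite morphism compatible with localization, but it should be invoked explicitly.
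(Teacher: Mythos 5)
Your argument is sound in the special case where the centers $\mathfrak p_1,\ldots,\mathfrak p_t$ are pairwise \emph{distinct}: incomparability then follows as you say, the contraction $K_i=L_i\cap R$ is $\mathfrak p_i$-primary with $K_iR_{\mathfrak p_i}=L_i$, and the localization/base-change step (blowup and normalization both commute with localization, normalization being finite since $R$ is excellent) correctly identifies $\phi^{-1}(\operatorname{Spec}(R_{\mathfrak p_i}))$ with the normalized blowup of $L_i$. This portion parallels the paper's chart-by-chart verification.

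The genuine gap is that the lemma does not assume the centers are distinct, and the case of several valuations sharing a center is exactly where the work lies; indeed the paper's later construction of representations explicitly reindexes the divisors $E_{i,j}$ by which prime $\mathfrak p_i$ they dominate, with several $j$ per $i$. If $\mathfrak p_i=\mathfrak p_j$ for $i\ne j$, your claim $K_jR_{\mathfrak p_i}=R_{\mathfrak p_i}$ fails and $KR_{\mathfrak p_i}=L_i\cap L_j$, so you can no longer quote the one-valuation result to conclude that $\mathcal O_{\nu_i}$ is a divisorial local ring on the normalized blowup of $KR_{\mathfrak p_i}$. What is needed (and what the paper proves) is that if one replaces the individual primary ideals $L_{i,j}$ for a fixed center by their \emph{product} $J_i=\prod_j L_{i,j}$, then the normalized blowup $X_i$ of $J_i$ carries all the $\mathcal O_{\nu_{i,j}}$ as local rings of codimension-one points. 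This is not automatic: one checks that each $L_{i,j}\mathcal O_{X_i}$ is invertible, invokes the universal property of blowing up to get a projective birational morphism $X_i\rightarrow X_{i,j}$, and then uses the valuative criterion (Proposition \ref{valcri}) to match the unique center of $\nu_{i,j}$ on $X_i$ with the divisor on $X_{i,j}$. Without this step (or some substitute for combining the primary ideals at a common center), your construction of $K$ does not yield the conclusion. A secondary, minor point: the existence of the local primary ideal $L_i$ is Rees's theorem (statement (G) of \cite{R3}, or \cite[Proposition 10.4.4]{HS}) rather than a consequence of the $m_R$-filtration results of \cite{C3}, though the fact itself is correct.
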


\begin{proof} After reindexing the $\mathfrak p_i$ we may suppose that $\mathfrak p_1,\ldots,\mathfrak p_r$  (with $r\le t$) are the distinct primes in the set $\{\mathfrak p_1,\ldots,\mathfrak p_t\}$. Then reindex the $\nu_i$ as $\nu_{i,j}$ for $1\le j\le \beta_i$ so that $m_{\nu_{i,j}}\cap R=\mathfrak p_i$ for all $i,j$.

It follows from Statement (G) of \cite{R3} (or \cite[Proposition 10.4.4]{HS}) that there exists a $(\mathfrak p_i)_{\mathfrak p_i}$-primary ideal $J_{i,j}$ of $R_{\mathfrak p_i}$ such that if $\phi_{i,j}:X_{i,j}\rightarrow \mbox{Spec}(R_{\mathfrak p_i})$ is the normalization of the blowup of $J_{i,j}$, then $\mathcal O_{\nu_{i,j}}$ is a local ring 
of a prime Weil divisor  on $X_{i,j}$. Let $J_i=\prod_jJ_{i,j}$ and $\phi_i:X_i\rightarrow \mbox{Spec}(R_{\mfp_i})$  be the normalization of the blow up of $J_i$.
 
 We will now show that $\mathcal O_{\nu_{i,j}}$ is a local ring 
 of  $X_{i}$ for all $i$.  
 Let $Y_{i,j}$ be the blowup of $J_{i,j}$ and $Y_i$ be the blowup of $J_i$, so that there are natural finite birational projective  morphisms $X_{i,j}\rightarrow Y_{i,j}$ and $X_i\rightarrow Y_i$. Let $J_{i,j}=(a_{i,j,1},\ldots,a_{i,j,\alpha_{i,j}})$.   
 The ideal sheaves $J_{i,j}\mathcal O_{Y_i}$ are locally principal, since $Y_i$ is covered by the open affine sets with 
 the affine coordinate rings 
 $T_{i,k_1,\ldots,k_{\beta_i}}=R_{\mathfrak p_i}[J_i/f]$ where $f=a_{i,1,k_1}\cdots a_{i,\beta_i,k_{\beta_i}}$ for some $k_1,\ldots,k_{\beta_i}$.
  The ideal sheaves $J_{i,j}\mathcal O_{X_i}$ are thus locally principal,  since 
 $X_i$ is covered by the open affine sets 
 with the affine coordinate rings $S_{i,k_1,\ldots,k_{\beta_i}}$ where $S_{i,k_1,\ldots,k_{\beta_i}}$ is the normalization of  $T_{i,k_1,\ldots,k_{\beta_i}}$.  Thus we have a  birational projective morphism $X_{i}\rightarrow Y_{i,j}$ for all $i,j$ (by the universal property of blowing up, c.f. \cite[Proposition II.7.14]{Ha}). Since $X_{i}$ is normal, there is a birational projective morphism
 $X_i\rightarrow X_{i,j}$ for all $i,j$. Now for fixed $i,j$, by Proposition \ref{valcri}, there exists a unique local ring $A$ of $X_i$ such that the valuation ring $\mathcal O_{\nu_{i,j}}$ dominates $A$. 
 
 
 Let $B$ be the local ring of $X_{i,j}$ such that $A$ dominates $B$. Now $B$ is the unique local ring of $X_{i,j}$ which is dominated by $\nu_{i,j}$  by Proposition \ref{valcri}.
 Since $\mathcal O_{\nu_{i,j}}$ is a local ring of $X_{i,j}$ we have that $B=\mathcal O_{\nu_{i,j}}$ so that $\mathcal O_{\nu_{i,j}}=A$ is a local ring of $X_i$.

 For each $i$, there exists a $\mathfrak p_i$-primary ideal $K_i$ of $R$ such that $(K_i)_{\mathfrak p_i}=J_i$. Let $K=\cap_{i=1}^rK_i$. The associated primes of $K$ are thus $\mathfrak p_1,\ldots,\mathfrak p_r$. Let $\phi:X\rightarrow \mbox{Spec}(R)$ be the normalization of the blowup of $K$.
 
For given $i,j$, there exists $f_j\in J_i$ such that $\mathcal O_{\nu_{ij}}$ is a local ring of the normalization $T_{i,j}$ of $R_{\mathfrak p_i}[J_i/f_j]$.  Since $J_i=K_{\mathfrak p_i}$, we  may assume that $f_j\in K$, so that $R_{\mathfrak p_i}[J_i/f_j] 
 =R[K/f_j]_{\mathfrak p_i}$. Let $U_{i,j}$ be the normalization of $R[K/f_j]$, so that $U_{i,j}$ is the affine coordinate ring of an open subset of $X$. Now $T_{i,j}$ is the integral closure of $R[K/f_j]_{\mathfrak p_j}= R_{\mathfrak p_i}[J_i/f_j]$, so that  $(U_{i,j})_{\mathfrak p_i}=(T_{i,j})_{\mathfrak p_i}$, and so $\mathcal O_{\nu_{i,j}}$ is a local ring of $U_{i,j}$, and hence is a local ring of $X$.
   \end{proof}
   
   \begin{Remark} If $R$ is an excellent local domain and $\nu_1,\ldots,\nu_t$ are divisorial valuations of $R$, then a slight modification of the above proof gives the weaker statement that there exists an ideal $K$ of $R$ such that if $\phi:X\rightarrow\mbox{Spec}(R)$  is the normalization of the blowup of $K$, then there exist prime Weil divisors $E_1,\ldots,E_t$ on $X$ such that $\mathcal O_{X,E_i}=\mathcal O_{\nu_i}$ for $1\le i\le t$.
   \end{Remark}

  As in  \cite[Chapter 5]{C3}, Lemma \ref{Lemma0} allows us to define a representation of an $s$-divisorial filtration $\mathcal I=\{I_m\}$ on an excllent local domain $R$, where 
 $$
I_m=I(\nu_1)_{\lceil ma_1\rceil }\cap\cdots\cap I(\nu_t)_{\lceil ma_t\rceil}.
$$  
By Lemma \ref{Lemma0}, we may construct a blow up $\phi:X\rightarrow \mbox{Spec}(R)$ satisfying the conclusions of the lemma for $\nu_1,\ldots,\nu_t$. Let $D$ be the real Weil divisor $D=a_1E_1+\cdots+a_tE_t$ on $X$. Define
$$
I(mD)=\Gamma(X,\mathcal O_X(-\lceil \sum ma_iE_i\rceil)\cap R.
$$
giving a filtration $\mathcal I(D)=\{I(mD)\}$ on $R$. We have that 
$$
I(mD)=\Gamma(X,\mathcal O_X(-\lceil \sum ma_iE_i\rceil)\cap R
=I(\nu_1)_{\lceil ma_1\rceil }\cap\cdots\cap I(\nu_t)_{\lceil ma_t\rceil}=I_m
$$
so that $I(mD)$ is the $s$-divisorial filtration $\mathcal I$. 

Given a representation $\mathcal I(mD)$ of an $s$-divisorial filtration $\mathcal I$, it is desirable sometimes to separate the prime components of $D$ which dominate different prime ideals of $R$, as in the proof of Lemma \ref{Lemma0}.

Let $\mathfrak p_1,\ldots,\mathfrak p_r$ be the distinct prime ideals of $R$ which are dominated by a prime component of $D$.  Reindex the $E_i$ as $E_{i,j}$, where for each $i$, $\{E_{i,j}\}$ are the prime divisors (from the set $\{E_i\}$) which dominate $\mathfrak p_i$.
Let $a_{ij}\in \RR_{>0}$ be defined so that 
$D=\sum_{i,j} a_{i,j}E_{i,j}$. Define $D(i)=\sum_ja_{i,j}E_{i,j}$ for $1\le i\le r$. Then the filtrations $\mathcal I(D(i))=\{I(mD(i))\}$ are $s$-divisorial filtrations on $R$ and 
$I(mD)=I(mD(1))\cap\cdots \cap I(mD(r))$ for all $m\ge 0$.

\begin{Theorem}\label{Theorem9} Suppose that $R$ is an excellent local domain and $\mathfrak a$ is an $m_R$-primary ideal.
Let $\mathcal I(D)$  be a real $s$-divisorial filtration and $\mathcal I$ be an arbitrary filtration.  Suppose 
that $\mathcal I(D)\subset \mathcal I$, so that $s(\mathcal I)\le s=s(\mathcal I(D))$.  Then   $e_s(\mathfrak a,\mathcal I(D))=e_s(\mathfrak a,\mathcal I)$ if and only if
$I(mD)=I_m$ for all $m\ge 0$.
\end{Theorem}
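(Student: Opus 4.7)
The ``if'' direction is trivial: equal filtrations give equal multiplicities. For the converse, fix a representation
\[
I(mD) = I(\nu_1)_{\lceil ma_1\rceil}\cap\cdots\cap I(\nu_t)_{\lceil ma_t\rceil}
\]
with all $a_j>0$, and let $\mfp_j = m_{\nu_j}\cap R$; write $P$ for the set of distinct primes among $\mfp_1,\ldots,\mfp_t$. Since $R$ is an excellent local domain (hence universally catenary and equidimensional), every $\mfp\in P$ satisfies $\dim R/\mfp=s$ and $\dim R_\mfp=d-s$.

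First I would identify which primes contribute to the associativity formula (\ref{M10*}) for each side. Using $\sqrt{I(D)_n}=\mfp_1\cap\cdots\cap\mfp_t$ together with $I(D)_n\subset I_n$, Lemma \ref{dim} gives $V(I_n)\subset V(I(D)_n)=\bigcup_j V(\mfp_j)$ for $n\ge 1$. Any prime of coheight $s$ in $V(I_n)$ is then forced to coincide with some $\mfp_j\in P$, since the $\mfp_j$ are pairwise incomparable. Consequently
\begin{equation*}
e_s(\mfa,\mathcal I(D)) = \sum_{\mfp\in P} e_{R_\mfp}(\mathcal I(D)_\mfp)\,e_\mfa(R/\mfp),\qquad e_s(\mfa,\mathcal I) = \sum_{\mfp\in P} e_{R_\mfp}(\mathcal I_\mfp)\,e_\mfa(R/\mfp),
\end{equation*}
and every term on the left is strictly positive: after localizing at $\mfp\in P$ only those $\nu_j$ with $\mfp_j=\mfp$ survive (for $\mfp_j\ne\mfp$ one has $\mfp_j\not\subset\mfp$, hence $I(\nu_j)_kR_\mfp=R_\mfp$), yielding a nontrivial divisorial $m_{R_\mfp}$-primary filtration on the excellent local domain $R_\mfp$; and $e_\mfa(R/\mfp)>0$ because $\mfa$ is $m_R$-primary.

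From $\mathcal I(D)\subset\mathcal I$ we get $\mathcal I(D)_\mfp\subset\mathcal I_\mfp$, so $e_{R_\mfp}(\mathcal I(D)_\mfp)\ge e_{R_\mfp}(\mathcal I_\mfp)$ by Lemma \ref{FiltIneq}. The hypothesized equality of the two global multiplicities then forces termwise equality along $P$; since the common value is positive, each $\mathcal I_\mfp$ is a nontrivial $m_{R_\mfp}$-primary filtration. We are now precisely in the situation of the $m_R$-primary divisorial case \cite[Corollary 7.4]{C3}: on the excellent local domain $R_\mfp$, the divisorial $m_{R_\mfp}$-filtration $\mathcal I(D)_\mfp$ is contained in the $m_{R_\mfp}$-primary filtration $\mathcal I_\mfp$ with equal multiplicities. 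That result yields $I(mD)R_\mfp = I_mR_\mfp$ for every $m\ge 0$ and every $\mfp\in P$.

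Finally I would promote this local equality to a global one. The inclusion $I(mD)\subset I_m$ is given. For the reverse, let $f\in I_m$ and fix $j$. Then $f\in I_mR_{\mfp_j}=I(mD)R_{\mfp_j}\subset I(\nu_j)_{\lceil ma_j\rceil}R_{\mfp_j}$, so $fs\in I(\nu_j)_{\lceil ma_j\rceil}$ for some $s\in R\setminus\mfp_j$. Since $s\notin\mfp_j=m_{\nu_j}\cap R$, we have $\nu_j(s)=0$, hence $\nu_j(f)\ge\lceil ma_j\rceil$, i.e.\ $f\in I(\nu_j)_{\lceil ma_j\rceil}$. Intersecting over $j$ gives $f\in I(mD)$, completing the proof. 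The main obstacle is the reduction step: confirming that only the primes in $P$ contribute to (\ref{M10*}) for \emph{both} filtrations, and that after localization at each $\mfp\in P$ the inclusion $\mathcal I(D)_\mfp\subset\mathcal I_\mfp$ is an inclusion of nontrivial $m_{R_\mfp}$-primary filtrations to which \cite[Corollary 7.4]{C3} applies — this rests on the equidimensionality of $R$ and on the positivity of each term in the associativity formula for $e_s(\mfa,\mathcal I(D))$.
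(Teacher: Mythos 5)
Your proof is correct and follows essentially the same route as the paper's: reduce via the associativity formula of Proposition \ref{Prop2} to the localizations at the centers $\mfp_i$, use positivity of $e_{R_{\mfp_i}}(\mathcal I(D)_{\mfp_i})$ (which, as you note, rests on the fact that a nontrivial divisorial filtration of $m_{R_{\mfp_i}}$-primary ideals on an excellent local domain has positive multiplicity — the paper cites \cite[Proposition 5.3]{C3} for this) to force termwise equality and to rule out $\mathcal I_{\mfp_i}$ being trivial, and then invoke the known $m_R$-primary divisorial case to obtain $I(mD)_{\mfp_i}=(I_m)_{\mfp_i}$ for all $i$ and $m$. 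The only difference is cosmetic and occurs in the last step: the paper globalizes these local equalities via the primary decomposition of $I(mD)$, whereas you do it directly with the valuations $\nu_j$; both arguments work.
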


\begin{proof} Let $\mathcal I=\{I_m\}$. First suppose that $I(mD)=I_m$ for all $m\ge 0$. Then $e_s(\mathfrak a,\mathcal I(D))=e_s(\mathfrak a,\mathcal I)$ by definition of $e_s$. 

Now suppose that $e_s(\mathfrak a,\mathcal I(D))=e_s(\mathfrak a,\mathcal I)$.
Let $\phi:X\rightarrow \mbox{Spec}(R)$ be a representation of the filtration $\mathcal I(D)$. There are prime ideals $\mathfrak p_1,\ldots,\mathfrak p_r$ in $R$ such that $\dim R/\mathfrak p_i=s$ for all $i$
and $X$ is  the normalization of the blowup of an ideal $K$ of $R$ such that $\mathfrak p_1,\ldots,\mathfrak p_r$ are the associated primes of $K$. Further,  $D=\sum_{i=1}^rD(i)$ is an   effective Weil divisor on $X$ such that for all $i$,  all prime components $E$ of $D(i)$ satisfy $m_E\cap R=\mathfrak p_i$, where $m_E$ is the maximal ideal of $\mathcal O_{X,E}$. 

Since excellent  rings are universally catenary, we have by Proposition \ref{Prop2} that 
$$
e_s(\mathfrak a,\mathcal I(D))=\sum_{i=1}^re_{R_{\mathfrak p_i}}(\mathcal I(D)_{\mathfrak p_i})e_{\mathfrak a}(R/\mathfrak p_i)
$$
and  since  $V(I_1)\subset V(I(D))$, we have 
$$
e_s(\mathfrak a,\mathcal I) =\sum_{i=1}^re_{R_{\mathfrak p_i}}(\mathcal I_{\mathfrak p_i})e_{\mathfrak a}(R/\mathfrak p_i).
$$
 Now for all $i$,  we have that $e_{R_{\mathfrak p_i}}(\mathcal I(D)_{\mathfrak p_i})\ge e_{R_{\mathfrak p_i}}(\mathcal I_{\mathfrak p_i})$ since $\mathcal I(D)_{\mfp_i}\subset \mathcal I_{\mfp_i}$. Thus $e_s(\mathfrak a,\mathcal I(D))\ge e_s(\mathfrak a,\mathcal I)$ with equality if and only if $e_{R_{\mathfrak p_i}}(\mathcal I(D)_{\mathfrak p_i})= e_{R_{\mathfrak p_i}}(\mathcal I_{\mathfrak p_i})$ for all $i$.

Suppose  that  for some $i$,  $\mathcal I_{\mathfrak p_i}$ is a filtration of $m_{R_{\mathfrak p_i}}$-primary ideals. By \cite[Theorem 1.4]{C4} and \cite[Corollary 7.5]{C3}, 
$e_{R_{p_i}}(\mathcal I(D)_{\mfp_i})=e_{R_{\mfp_i}}(\mathcal I_{\mfp_i})$ if and only if $\mathcal I(D)_{\mfp_i}=\mathcal I_{\mfp_i}$.
Also, if $\mathcal I_{\mathfrak p_i}$ is the trivial filtration, then since 
$\mathcal I(D)_{p_i}$ is a filtration of $m_{R_{\mathfrak p_i}}$-primary ideals,
we have that   $e_{R_{\mathfrak p_i}}(\mathcal I(D)_{\mathfrak p_i})>0$ by \cite[Proposition 5.3]{C3} while $e_{R_{\mathfrak p_i}}(\mathcal I_{\mathfrak p_i})=0$, so that $e_{R_{\mathfrak p_i}}(\mathcal I_{\mathfrak p_i})<e_{R_{\mathfrak p_i}}(\mathcal I(D)_{\mathfrak p_i})$. Thus 
$e_s(\mathfrak a,\mathcal I(D))=e_s(\mathfrak a,\mathcal I)$ if and only if $I(mD)_{\mathfrak p_i}=(I_m)_{\mathfrak p_i}$ for all $i$ and $m\in \NN$. 
In particular, with our assumption that $e_s(\mathfrak a,\mathcal I(D))=e_s(\mathfrak a,\mathcal I)$, we have that
\begin{equation}\label{eqR3}
I(mD)_{\mathfrak p_i}=(I_m)_{\mathfrak p_i}\mbox{ for all $i$ and $m\in \NN$}. 
\end{equation}

Now each $I(mD)$ is an intersection $Q_1\cap \cdots \cap Q_r$ where each $Q_i$ is a $\mfp_i$-primary ideal. Thus $I_m=Q_1\cap\cdots\cap Q_r\cap J$  where $J$  is an ideal of $R$ such that $J\not\subset \mfp_i$  for any $i$. But then 
$I_m=I(mD)$ since $I(mD)\subset I_m$.
 \end{proof}

	\begin{Lemma}\label{LemnMineq}  Suppose that $R$ is a $d$-dimensional  analytically unramified local ring, $\mathfrak a$ is an $m_R$-primary ideal    and $\mathcal I(1)=\{I(1)_j\}$ and $\mathcal I(2)=\{I(2)_j\}$ are filtrations such that $s(\mathcal I(1))=s(\mathcal I(2))$. Let 
		$s=s(\mathcal I(1))=s(\mathcal I(2))$ and let $e_i=e_s(\mathfrak a,\mathcal I(1)^{[d-s-i]},\mathcal I(2)^{[i]})$ for $0\le i\le d-s$.
		Then either
		\begin{enumerate}
			\item[a)] $e_0=0$ or $e_{d-s}=0$ and $e_i=0$ for $0<i<d-s$ or
			\item[b)] $e_0>0$ and $e_{d-s}>0.$
		\end{enumerate}

		For all $n_1,n_2\in\mathbb N,$ let 
		$$
		P_s(n_1,n_2)=\lim_{m\rightarrow\infty}\frac{e_s(\mathfrak a,R/I(1)_{mn_1}I(2)_{mn_2})}{m^{d-s}/(d-s)!}
		=\sum_{j=0}^{d-s} \frac{(d-s)!}{(d-s-j)!j!}e_jn_1^{d-s-j}n_2^j.
		$$
		
		Consider the following conditions.
		\begin{enumerate}
			\item[1)] $e_i^2= e_{i-1}e_{i+1}$ 
			for $1 \le i\le d-s-1$.
			\item[2)]   
			$e_ie_{d-s-i}= e_0e_{d-s}$ for $0\le i\le d-s$.
			\item[3)] $e_i^{d-s}= e_0^{d-s-i}e_{d-s}^i$ for $0\le i\le d-s$. 
			\item[4)]  $e_s(\mathfrak a,\mathcal I(1)\mathcal I(2)))^{\frac{1}{d-s}}= e_0^{\frac{1}{d-s}}+e_{d-s}^{\frac{1}{d-s}}$,  where $\mathcal I(1)\mathcal I(2)=\{I(1)_jI(2)_j\}$.
			\item[5)] $P_s(n_1,n_2)=\left(e_0^{\frac{1}{d-s}}n_1+e_{d-s}^{\frac{1}{d-s}}n_2\right)^{d-s}$ for all $n_1,n_2\in\mathbb N.$ 
		\end{enumerate}
		Then the following hold.
		\begin{enumerate}
		\item[i)] Statement 3) holds if and only if  statement 4) holds. 
		\item[ii)] If a) holds then  the statements 1) - 5) hold. 
		\item[iii)] If the $e_i$ are nonzero for  $0\le i\le d-s$, then  the statements 1) - 5) are equivalent. 
			\end{enumerate}
	\end{Lemma}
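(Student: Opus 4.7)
The plan is to first establish the dichotomy between cases a) and b), then handle the three parts in the order (i), (ii), (iii). The dichotomy follows from an induction using only the Minkowski inequality of Theorem \ref{Mink1}(i): if $e_0=0$, then $e_1^2\le e_0e_2=0$ forces $e_1=0$, and iterating gives $e_i=0$ for all $0\le i\le k-1$ where $k=d-s$; symmetrically, $e_k=0$ forces $e_i=0$ for $1\le i\le k$. Hence if either endpoint vanishes all intermediate $e_i$ vanish (giving a)), and otherwise both are positive (giving b)).

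For part (i), I would use that $P_s(1,1)=e_s(\mathfrak a,\mathcal I(1)\mathcal I(2))$ together with the binomial identity
$$
(e_0^{1/k}+e_k^{1/k})^k=\sum_{j=0}^k\binom{k}{j}e_0^{(k-j)/k}e_k^{j/k}.
$$
Statement 4) then becomes $\sum_j\binom{k}{j}e_j=\sum_j\binom{k}{j}e_0^{(k-j)/k}e_k^{j/k}$. By Theorem \ref{Mink1}(iii), $e_j\le e_0^{(k-j)/k}e_k^{j/k}$ for every $j$, so equality of the sums of nonnegative terms is equivalent to term-by-term equality, i.e.\ to statement 3). The reverse implication 3)$\Rightarrow$4) is immediate from the same binomial identity.

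For part (ii), assume a). At most one of $e_0,e_k$ is nonzero, so 1) and 2) reduce to $0=0$. For 3), observe that $e_0^{k-i}e_k^i=0$ whenever $0<i<k$ (one of $e_0,e_k$ is zero), matching $e_i^k=0$; the boundary cases $i=0,k$ are trivial. Statement 5) reduces to the identity $e_0n_1^k+e_kn_2^k=(e_0^{1/k}n_1+e_k^{1/k}n_2)^k$, which holds whenever one of $e_0,e_k$ vanishes. And 4) follows from 3) by part (i).

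For part (iii), with all $e_i>0$, the key is to take logarithms: Theorem \ref{Mink1}(i) says $f_i:=\log e_i$ is a convex sequence, and statement 1) says it is arithmetic, equivalently $e_i=e_0^{(k-i)/k}e_k^{i/k}$ for all $i$, which is statement 3). The implications 3)$\Rightarrow$2) and 3)$\Rightarrow$5) are direct algebraic substitutions (for 5), $P_s(n_1,n_2)=\sum_j\binom{k}{j}e_0^{(k-j)/k}e_k^{j/k}n_1^{k-j}n_2^j=(e_0^{1/k}n_1+e_k^{1/k}n_2)^k$), and 5)$\Rightarrow$3) follows by comparing coefficients of $n_1^{k-j}n_2^j$. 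The equivalence 3)$\Leftrightarrow$4) is given by part (i). The most delicate step, and the one I expect to be the main obstacle, is 2)$\Rightarrow$3): apply Theorem \ref{Mink1}(iii) to obtain both $e_i\le e_0^{(k-i)/k}e_k^{i/k}$ and $e_{k-i}\le e_0^{i/k}e_k^{(k-i)/k}$, so that $e_ie_{k-i}\le e_0e_k$; under the hypothesis 2) and since all quantities are strictly positive, equality in the product forces equality in each factor, which is 3). This last step is the place where the positivity assumption of (iii) is essential.
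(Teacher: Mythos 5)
Your proof is correct and follows essentially the same route as the paper, with the same key ingredients: Theorem \ref{Mink1}(i) for the dichotomy between a) and b), Theorem \ref{Mink1}(iii) together with the rigidity of a sum of nonnegative terms for the equivalence 3)~$\Leftrightarrow$~4), log-convexity for 1)~$\Leftrightarrow$~3), and the product argument $e_i e_{k-i}\le e_0 e_k$ with term-by-term equality for 2)~$\Leftrightarrow$~3). The only difference is that the paper delegates the equivalences 1), 3), 4), 5) (and part (i)) to the analysis of \cite[Section 9]{C3} and spells out only the step 2)~$\Leftrightarrow$~3); you supply self-contained arguments for the delegated parts, but these do not differ in substance from what the citation implies. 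The step you flagged as delicate, 2)~$\Rightarrow$~3), is precisely the step the paper proves explicitly, and your argument coincides with theirs.
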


\begin{proof}
We have that either a) or b) holds by the   Minkowski inequalities for filtrations (Theorem \ref{Mink1}).

$(i)$ The analysis of \cite[Section 9]{C3} is valid for arbitrary filtrations, and shows that Statement 3) holds if and only if  statement 4) holds.

$ii)$ If a) holds, then all of the equalities 1) - 5) hold. 

$iii)$ Suppose that all $e_i$ are nonzero. We show that all of the equalities 1) - 5) hold.  The proof of  \cite[Section 9]{C3} applies here to show that  conditions 1), 3), 4) and 5) are equivalent. 

It remains to show that 2) is equivalent to 3). By the inequality iii) of the Minkowski inequalities for filtrations (Theorem \ref{Mink1}) we have that
\begin{equation}\label{eqM1}
e_i^{d-s}e_{d-s-i}^{d-s}\le (e_0^{d-s-i}e_{d-s}^i)(e_0^ie_{d-s}^{d-s-i})=e_0^{d-s}e_{d-s}^{d-s}
\end{equation}
for $0\le i\le d-s$, and equality holds in this equation for all $i$ if and only if equality holds in 2). Taking $(d-s)$-th roots, we have that equality holds in (\ref{eqM1}) if and only if equality holds in 3) for $0\le i\le d-s$.

\end{proof}

Teissier \cite{T3} (for Cohen-Macaulay normal two-dimensional complex analytic $R$), Rees and Sharp \cite{RS} (in dimension 2) and Katz \cite{Ka} (in complete generality)
have shown that 
 if $R$ is a formally equidimensional  local ring and $I(1), I(2)$  are $m_R$-primary ideals then the Minkowski inequality is an equality, that is, 
\begin{equation}
e_R(I(1)I(2))^{\frac{1}{d}}=e(I(1))^{\frac{1}{d}}+e(I(2))^{\frac{1}{d}},
\end{equation}
if and only if 
there exist positive integers $a$ and $b$ such that 
\begin{equation}\label{Mink2}
\overline{\sum_{n\ge 0}I(1)^{an}t^{n}}=\overline{\sum_{n\ge 0}I(2)^{bn}t^{n}}.
\end{equation}

 If $\mathcal I(1)$ and $\mathcal I(2)$ are filtrations of an analytically unramified  local ring $R$ 
 and condition \ref{Mink2}) holds then the Minkowski equality  holds between $\mathcal I(1)$ and $\mathcal I(2)$ by Theorem \ref{TheoremN10}, but the converse statement, that  the Minkowski equality  implies condition (\ref{Mink2})   is not true for filtrations, even for $m_R$-filtrations in a regular local ring, as is shown in a simple example in \cite{CSS}.

In \cite[Theorem 1.6]{C4}, it is shown that this characterization holds if $\mathcal I(1)$ and $\mathcal I(2)$ are bounded  filtrations of $m_R$-primary ideals in a $d$-dimensional analytically irreducible or excellent local domain ($s$-divisorial filtrations are bounded; bounded filtrations are defined in the following Section \ref{SecBound}). We show here that this characterization holds for  $s$-divisorial filtrations.

\begin{Theorem}\label{Theorem10} Suppose that $R$ is a $d$-dimensional  excellent local domain and that $\mathfrak a$ is an $m_R$-primary ideal. 
Let $\mathcal I(D_1)$ and $\mathcal I(D_2)$ be  two nontrivial integral  $s$-divisorial filtrations. Then  the Minkowski equality 
$$
e_s(\mathfrak a,\mathcal I(D_1)\mathcal I(D_2))^{\frac{1}{d-s}}= e_0^{\frac{1}{d-s}}+e_{d-s}^{\frac{1}{d-s}}
$$
holds between $\mathcal I(D_1)$ and $\mathcal I(D_2)$ if and only if   there exist $a,b\in \ZZ_{>0}$ such that $I(amD_1)=I(bmD_2)$ for all $m\in \NN$.
\end{Theorem}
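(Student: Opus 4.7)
The ``if'' direction follows immediately from Theorem \ref{TheoremN10}: if $I(amD_1)=I(bmD_2)$ for all $m$, then the graded $R$-algebras $\bigoplus_{n\ge 0} I(anD_1)t^n$ and $\bigoplus_{n\ge 0} I(bnD_2)t^n$ coincide, hence have equal integral closures in $R[t]$, and Theorem \ref{TheoremN10} yields the Minkowski equality.

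For the ``only if'' direction, the plan is to localize at each prime of appropriate codimension, reduce to the divisorial $m_R$-filtration case handled by \cite[Theorem 1.6]{C4} (equivalently \cite[Theorem 12.1]{C3}), and then globalize. By Lemma \ref{Lemma0} choose a common representation $\phi\colon X\to\Spec(R)$ on which both $D_1$ and $D_2$ are supported, and let $\mathcal P$ be the finite set of primes $\mathfrak p\in\Spec(R)$ with $\dim R/\mathfrak p=s$, $\dim R_\mathfrak p=d-s$, that arise as centers of prime components of $D_1$ or $D_2$. By Lemma \ref{LemnMineq} we may assume $e_0>0$ and $e_{d-s}>0$, and we restrict to $d-s\ge 2$ (for $d-s=1$ the Minkowski ``equality'' is automatic and a separate analysis of height-one primary decompositions is needed).

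The next step is to track the proof of Theorem \ref{Mink1}(iv). Global Minkowski equality forces both: \textbf{(a)} local Minkowski equality $e_{R_\mathfrak p}(\mathcal I(D_1)_\mathfrak p\mathcal I(D_2)_\mathfrak p)^{1/(d-s)}=e_{R_\mathfrak p}(\mathcal I(D_1)_\mathfrak p)^{1/(d-s)}+e_{R_\mathfrak p}(\mathcal I(D_2)_\mathfrak p)^{1/(d-s)}$ at each $\mathfrak p\in\mathcal P$; and \textbf{(b)} equality in the classical (discrete) Minkowski inequality applied to the nonnegative vectors $x_i=(e_{R_\mathfrak p}(\mathcal I(D_i)_\mathfrak p)^{1/(d-s)}e_\mathfrak a(R/\mathfrak p)^{1/(d-s)})_{\mathfrak p\in\mathcal P}$ for $i=1,2$. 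Since $d-s\ge 2$ and both $x_1,x_2$ are nonzero, the equality case of (b) forces $x_2=\lambda x_1$ for a single $\lambda>0$; in particular $\mathcal I(D_1)_\mathfrak p$ is nontrivial if and only if $\mathcal I(D_2)_\mathfrak p$ is, so at each $\mathfrak p\in\mathcal P$ both are nontrivial divisorial $m_{R_\mathfrak p}$-filtrations. Applying \cite[Theorem 1.6]{C4} at each $\mathfrak p\in\mathcal P$ furnishes positive integers $a_\mathfrak p,b_\mathfrak p$ with $I(a_\mathfrak p mD_1)_\mathfrak p=I(b_\mathfrak p mD_2)_\mathfrak p$ for all $m$; comparing leading multiplicity coefficients gives $(a_\mathfrak p/b_\mathfrak p)^{d-s}=\lambda^{d-s}$, so the ratio $a_\mathfrak p/b_\mathfrak p=\lambda$ is independent of $\mathfrak p$.

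The final step is synchronization and globalization. Writing $\lambda=p/q$ in lowest terms and choosing $N$ sufficiently divisible (for example $N=\operatorname{lcm}_{\mathfrak p\in\mathcal P}(a_\mathfrak p)\cdot q$), set $a=Np$ and $b=Nq$; then $a/a_\mathfrak p,b/b_\mathfrak p\in\ZZ_{>0}$ for every $\mathfrak p\in\mathcal P$, and substituting $n=(a/a_\mathfrak p)m$ in the local equalities yields $I(amD_1)_\mathfrak p=I(bmD_2)_\mathfrak p$ for each $\mathfrak p\in\mathcal P$ and every $m\in\NN$. Each $I(amD_1)$, being an intersection of valuation ideals $I(\nu_j)_{\lceil ama_j\rceil}$, each $\mathfrak p_j$-primary for some $\mathfrak p_j\in\mathcal P$, admits a primary decomposition with associated primes in $\mathcal P$; the same holds for $I(bmD_2)$, and since the two ideals have the same $\mathfrak p$-primary components at every $\mathfrak p\in\mathcal P$, we conclude $I(amD_1)=I(bmD_2)$ for all $m$. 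The main obstacle in this plan is step (b): one must carefully separate the two independent sources of inequality in Theorem \ref{Mink1}(iv) (the pointwise local Minkowski inequality and the discrete Minkowski inequality for sums) so as to extract both the local equalities \emph{and} the proportionality forcing a common $\lambda$, and then align this $\lambda$ with the scalings produced by the local application of \cite[Theorem 1.6]{C4}.
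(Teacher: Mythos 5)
Your proposal is correct for $d-s\ge 2$, and it takes a genuinely different route from the paper's proof. The paper writes $D_j=\sum_i D_j(i)$ according to the centers $\mathfrak p_i$, decomposes $P=\sum_i P_i$ (equation (\ref{eqE1})), and then uses a sum-of-squares (Cauchy--Schwarz) manipulation on the \emph{intermediate} mixed multiplicities $e(i)_j$, namely $0\le\sum_i(e(i)_{j+1}^{1/2}-\xi e(i)_{j-1}^{1/2})^2\le 0$, to propagate the single global ratio $\xi$ to every local piece. This immediately gives $\lambda_i=\xi$ for all $i$, so the application of \cite[Theorem 12.1]{C3} produces one pair $a,b$ that works simultaneously for all $i$ and no synchronization step is needed. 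You instead extract the proportionality $x_2=\lambda x_1$ from the equality case of the $\ell^{k}$-Minkowski inequality ($k=d-s\ge 2$) applied to the outer sum in the proof of Theorem~\ref{Mink1}(iv), apply \cite[Theorem 1.6]{C4} locally to get $a_{\mathfrak p},b_{\mathfrak p}$ at each prime, match leading coefficients to see $a_{\mathfrak p}/b_{\mathfrak p}=\lambda$ for all $\mathfrak p$, and then synchronize via an lcm. Both arguments ultimately rest on the same two ingredients (local Minkowski equalities plus a global proportionality constraint) but organize them differently; the paper's approach buys a uniform $a,b$ for free, while yours is more modular (local $\Rightarrow$ global) and makes the role of the classical $\ell^{k}$-equality case completely explicit. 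One thing worth emphasizing is that your flag of the $d-s=1$ case is not a weakness of your argument but a genuine observation: for $d-s=1$ multiplicity is additive on products in the one-dimensional local domains $R_{\mathfrak p}$, so the Minkowski ``equality'' holds unconditionally (e.g.\ $\{(x^n)\}$ and $\{(y^n)\}$ in $k[x,y]_{(x,y)}$ with $s=1$), and the paper's sum-of-squares step is vacuous there as well -- so the paper's proof, like yours, implicitly requires $d-s\ge 2$. Finally, your closing primary-decomposition step is sound because all associated primes of $I(amD_1)$ and $I(bmD_2)$ lie in the finite set $\mathcal P$ of primes of dimension $s$, which are pairwise incomparable, so the ideals are determined by their localizations at primes of $\mathcal P$.
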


\begin{proof} Let $\phi:X\rightarrow \mbox{Spec}(R)$ be a representation of the filtrations $\mathcal I(D_1)$ and $\mathcal I(D_2)$; that is, there are prime ideals $\mathfrak p_1,\ldots, \mathfrak p_t$ in $R$ such that $\dim R/\mathfrak p_i=s$ for all $i$ and $X$ is the  normalization of the blowup of an ideal $K$ of $R$ such that $\mathfrak p_1,\ldots, \mathfrak p_t$ are the associated primes of $K$. Further,  $D_1=\sum_{i=1}^tD_1(i)$ and $D_2=\sum_{i=1}^tD_2(i)$ are effective Weil divisors on $X$ such that for all $i$ and for $j=1,2$,  all prime components $E$ of $D_j(i)$ satisfy $m_E\cap R=\mathfrak p_i$, where $m_E$ is the maximal ideal of $\mathcal O_{X,E}$. Let 
$\mathcal I(D_j(i))=\{I(mD_j(i))\}$ be the associated divisorial $s$-filtrations on $R$. 



Let 
$$
P(n_1,n_2)=\lim_{m\rightarrow\infty}\frac{e_s(\mathfrak a,R/I(mn_1D_1)I(mn_2D_2))}{m^{d-s}/(d-s)!}
$$
and for $1\le i\le t$, let
$$
P_i(n_1,n_2)=\lim_{m \rightarrow \infty}\frac{e_s(\mathfrak a,R/I(mn_1D_1(i)I(mn_2D_2(i)))}{m^{d-s}/(d-s)!}.
$$
Write
$$
P(n_1,n_2)=\sum_{j=0}^{d-s}\frac{(d-s)!}{(d-s-j)!j!}e_jn_1^{d-s-j}n_2^j
$$
and
$$
P_i(n_1,n_2)=\sum_{j=0}^{d-s}\frac{(d-s)!}{(d-s-j)!j!}e(i)_jn_1^{d-s-j}n_2^j.
$$
We have that 
\begin{equation}\label{eqE1}
P(n_1,n_2)=\sum_{i=1}^t P_i(n_1,n_2)
\end{equation}
 by Theorem \ref{Theorem5} applied to $P(n_1,n_2)$ and the $P_i(n_2,n_2)$. 
 
First assume that  the Minkowski equality holds.
By our assumptions, the  conclusions of Lemma \ref{LemnMineq} hold for $\mathcal I(D(1))$ and $\mathcal I(D(2))$. 
There exists $i$ such that $D_1(i)\ne 0$. We have
\begin{equation}\label{eqF1}
e_{R_{\mathfrak p_i}}(\mathcal I(D_1(i))_{\mathfrak p_i}^{[d-s]},\mathcal I(D_2(i))_{\mathfrak p_i}^{[0]})
=e_{R_{\mathfrak p_i}}(\mathcal I(D_1(i))_{\mathfrak p_i})
\end{equation}
by \cite[Proposition 6.5]{CSS}.

Let $S$ be the normalization of $R$, which is dominated by $X$, and let $\mathfrak q_1,\ldots,\mathfrak q_u$ be the prime ideals of $S$ which lie over $\mathfrak p_i$, so that each $S_{\mathfrak q_i}$ is analytically irreducible. Then by Equation (18), Lemma 5.2 and Proposition 5.3 of \cite{C3}, we have that
\begin{equation}\label{eqF3}
e_{R_{\mathfrak p_i}}(\mathcal I(D_1(i))_{\mathfrak p_i})>0.
\end{equation}
Now 
\begin{equation}\label{eqF2}
e(i)_0=e_s(\mathfrak a,\mathcal I(D_1(i))^{[d-s]},\mathcal I(D_2(i))^{[0]})
=e_{R_{\mathfrak p_i}}(\mathcal I(D_1(i))_{\mathfrak p_i}^{[d-s]},\mathcal I(D_2(i))_{\mathfrak p_i}^{[0]})e_s(\mathfrak a,R/\mathfrak p_i)
\end{equation}
by Theorem \ref{Theorem5}. Thus $e(i)_0>0$ by (\ref{eqF3}) and so 
$$
e_0=\sum e(i)_0>0.
$$
Similarly there exists $i'$ such that $D_2(i')\neq 0.$ Then using   a similar argument to the above, we have  $e(i')_{d-s}>0$ and hence $e_{d-s}>0$.  Now $e_j^{d-s}=e_0^{d-s-j}e_{d-s}^j$ for $0\le j\le d-s$ by i) of Lemma \ref{LemnMineq}, since the Minkowski equality holds. Since $e_0>0$ and $e_{d-s}>0$, we have that $e_j>0$ for all $j$.

 Since the Minkowski equality holds between $\mathcal I(D_1)$ and $\mathcal I(D_2)$,  and $e_j>0$ for all $j$,  we have 
 by iii) of Lemma \ref{LemnMineq}
 that  the equalities 1) of Lemma \ref{LemnMineq} hold so  there exists $\xi\in \RR_{>0}$ such that
\begin{equation}\label{eqT11}
\xi=\frac{e_1}{e_0}=\cdots=\frac{e_{d-s}}{e_{d-s-1}}.
\end{equation}

By (\ref{eqE1}) we have that $e_j=\sum_{i=1}^t e(i)_j$ for all $j$. By the inequalities 
$e(i)_j^2\le e(i)_{j-1}e(i)_{j+1}$ for $1\le j\le d-s-1$ (Theorem \ref{Mink1}) and (\ref{eqT11}) we have that
$$
\begin{array}{lll}
0&\le& \sum_{i=1}^t(e(i)_{j+1}^{\frac{1}{2}}-\xi e(i)_{j-1}^{\frac{1}{2}})^2
=\sum_{i=1}^t(e(i)_{j+1}-2\xi e(i)_{j+1}^{\frac{1}{2}}e(i)_{j-1}^{\frac{1}{2}}+\xi^2e(i)_{j-1})\\
&\le& \sum_{i=1}^t(e(i)_{j+1}-2\xi e(i)_j+\xi^2 e(i)_{j-1})\\
&=& e_{j+1}-2\xi e_j+\xi^2e_{j-1}\\
&=&\xi^2 e_{j-1}-2\xi^2 e_{j-1}+\xi^2 e_{j-1}=0.
\end{array}
$$
 Thus 
\begin{equation}\label{eqN1}
e(i)_{j+1}^{\frac{1}{2}}=\xi e(i)_{j-1}^{\frac{1}{2}}\mbox{ and }e(i)_j^2=e(i)_{j-1}e(i)_{j+1}
\end{equation}

for all $i$ and $1\le j\le d-s-1$  respectively. 
Thus for a particular $i$, either 
\begin{equation}\label{eqG1}
e(i)_j=0\mbox{ for all }j
\end{equation}
or
\begin{equation}\label{eqG2}
e(i)_j>0\mbox{ for all }j.
\end{equation}

Suppose that (\ref{eqG1}) holds for a particular $i$. Then $e(i)_0=e(i)_{d-s}=0$ which implies that
\begin{equation}\label{eqF4}
e_{R_{\mathfrak p_i}}(\mathcal I(D_1(i))_{\mathfrak p_i})=e_{R_{\mathfrak p_i}}(\mathcal I(D_2(i))_{\mathfrak p_i})=0
\end{equation}
and so $\mathcal I(D_1(i))_{\mathfrak p_i}$ is  the trivial filtration since we have a contradiction  to (\ref{eqF4}) by (\ref{eqF1}) and (\ref{eqF3})  if  $\mathcal I(D_1(i))_{\mathfrak p_i}$ is not trivial.  Replacing $D_1(i)$ with $D_2(i)$ in this argument we obtain that 
$\mathcal I(D_2(i))_{\mathfrak p_i}$ is also the trivial filtration. 
In particular, $I(D_1(i))R_{\mathfrak p_i}=R_{\mathfrak p_i}$ and $I(D_2(i))R_{\mathfrak p_i}=R_{\mathfrak p_i}$, a contradiction, since $\mathfrak p_i$ must be an associated prime of at least one of $I(D_1(i))$ or $I(D_2(i))$. Thus (\ref{eqG1}) cannot hold, and so (\ref{eqG2}) holds for all $i$.



Let us now consider a particular $i$. Then by
(\ref{eqN1}) and Lemma \ref{LemnMineq},  the Minkowski equalities  of Lemma \ref{LemnMineq} hold between $\mathcal I(D(i)_1)$ and $\mathcal I(D(i)_2)$. Thus there exists $\lambda_i\in \RR_{>0}$ such that 
$$
\frac{e(i)_{j+1}}{e(i)_j}=\lambda_i
$$
 for all $j$. Thus for $1\le j\le d-s-1$,
 $$
 \xi^2=\frac{e(i)_{j+1}}{e(i)_{j-1}}=\frac{e(i)_{j+1}}{e(i)_j}\frac{e(i)_j}{e(i)_{j-1}}
 =\lambda_i^2
 $$
 so that $\lambda_i=\xi$  and so
 $$
 \frac{e(i)_{d-s}^{\frac{1}{d-s}}}{e(i)_0^{\frac{1}{d-s}}}=\xi.
 $$
 We have that
 $$
 e(i)_j=e_s(\mathfrak a,\mathcal I(D_1(i))^{[d-s-j]},\mathcal I(D_2(i))^{[j]})
 =e_{R_{\mathfrak p_i}}(\mathcal I(D_1)_{\mathfrak p_i}^{[d-s-j]},\mathcal I(D_2)_{\mathfrak p_i}^{[j]})e_s(\mathfrak a,R/\mathfrak p_i)
 $$
 by Theorem \ref{Theorem5}. Thus for each $i$, the
 $e_{R_{\mathfrak p_i}}(\mathcal I(D_1)_{\mathfrak p_i}^{[d-s-j]},\mathcal I(D_2)_{\mathfrak p_i}^{[j]})$ satisfy the Minkowski equalities 1) - 3) of Lemma  \ref{LemnMineq} with 
 $$
 \frac{e_{R_{\mathfrak p_i}}(\mathcal I(D_2)_{\mathfrak p_i})^{\frac{1}{d-s}}}
 {e_{R_{\mathfrak p_i}}(\mathcal I(D_1)_{\mathfrak p_i})^{\frac{1}{d-s}}} 
 =\xi.
 $$
 Thus $\xi\in \QQ$ by \cite[Theorem 12.1]{C3} and the  proof of this theorem.

 Write $\xi=\frac{a}{b}$ with $a,b\in \ZZ_{>0}$. We have that $I(maD_1(i)_{\mathfrak p_i})=I(mbD_2(i)_{\mathfrak p_i})$ for all $i$  and $m\in \NN$ by \cite[Theorem 12.1]{C3}. Since  the only associated prime of $I(maD_1(i))$ and $I(mbD_2(i))$ is $\mathfrak p_i$, we have that $I(maD_1(i))=I(mbD_2(i))$ for all $i$  and $m\in \NN$. 
 Thus 
 $$
 I(maD_1)=I(maD_1(1))\cap \cdots\cap I(maD_1(t))=I(mbD_2(1))\cap \cdots\cap I(mbD_2(t))
 =I(mbD_2)
  $$
 for all $m\in \NN$.
 
  The converse follows from Theorem \ref{TheoremN10} since $\overline{R[\mathcal I(D_j)]}=R[\mathcal I(D_j)]$ for $j=1,2$.

  \end{proof}

The following corollary is proven for $m_R$-valuations (divisorial valuations which dominate $m_R$) in  \cite[Corollary 12.2]{C3}. 

\begin{Corollary}\label{Cor1.10} Suppose that $R$ is an excellent local domain and $\nu_1$ and $\nu_2$ are divisorial valuations of the quotient field of $R$  such that the valuation rings $\mathcal O_{\nu_1}$ and $\mathcal O_{\nu_2}$ both contain $R$. Suppose  that $s=\dim R/(m_{\nu_1}\cap R)=\dim R/(m_{\nu_2}\cap R)$ and  
Minkowski's equality holds between the filtrations $\mathcal I(\nu_1)=\{I(\nu_1)_m\}$ and $\mathcal I(\nu_2)=\{I(\nu_2)_m\}$. Then $\nu_1=\nu_2$.
\end{Corollary}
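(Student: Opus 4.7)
The plan is to apply Theorem \ref{Theorem10} to the two single-valuation $s$-divisorial filtrations $\mathcal I(\nu_1)$ and $\mathcal I(\nu_2)$.

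First, I would check that the hypotheses of Theorem \ref{Theorem10} are met. Each $\mathcal I(\nu_i)=\{I(\nu_i)_m\}$ is a nontrivial integral $s$-divisorial filtration: it arises from the single $s$-valuation $\nu_i$ taken with weight $a_i=1$, and it is nontrivial because $I(\nu_i)_1=m_{\nu_i}\cap R$ is a proper prime ideal (the center of $\nu_i$ on $R$, of dimension $s$). Fix any $m_R$-primary ideal $\mathfrak a$ (e.g.\ $\mathfrak a=m_R$). The hypothesis that Minkowski's equality holds between $\mathcal I(\nu_1)$ and $\mathcal I(\nu_2)$ then lets me invoke Theorem \ref{Theorem10}, producing positive integers $a,b$ such that $I(\nu_1)_{am}=I(\nu_2)_{bm}$ for every $m\in\NN$.

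Next, I would translate this equality of valuation ideals into a numerical identity between the valuations. For any nonzero $f\in R$, the condition $f\in I(\nu_1)_{am}$ is $\nu_1(f)\ge am$, and $f\in I(\nu_2)_{bm}$ is $\nu_2(f)\ge bm$; so the two sets of integers $m\ge 0$ for which these hold coincide, giving $\lfloor \nu_1(f)/a\rfloor=\lfloor \nu_2(f)/b\rfloor$. Applied to the power $f^k$, and using $\nu_i(f^k)=k\nu_i(f)$, this becomes $\lfloor k\nu_1(f)/a\rfloor=\lfloor k\nu_2(f)/b\rfloor$ for every $k\ge 1$. The standard squeezing $x-1<\lfloor x\rfloor\le x$ then forces $|\nu_1(f)/a-\nu_2(f)/b|<1/k$, and letting $k\to\infty$ gives $b\nu_1(f)=a\nu_2(f)$. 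Writing a general element of the quotient field $K$ as $f/g$ and using additivity of valuations, I conclude $b\nu_1=a\nu_2$ as functions on $K^\ast$.

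Finally, since $\nu_1$ and $\nu_2$ are divisorial valuations of the excellent local domain $R$, their valuation rings are discrete by \cite[Theorem 9.3.2]{HS}, so both value groups on $K^\ast$ are exactly $\ZZ$. Hence the image of $b\nu_1$ is $b\ZZ$ and the image of $a\nu_2$ is $a\ZZ$; the identity $b\ZZ=a\ZZ$ together with $a,b>0$ forces $a=b$, and therefore $\nu_1=\nu_2$. The only non-formal step is the passage from equality of valuation ideals to equality of the valuations; this is handled by the classical trick of taking powers of $f$ and passing to the limit. All the hard work is already contained in Theorem \ref{Theorem10}.
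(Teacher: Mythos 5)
Your proposal is correct and follows essentially the same strategy as the paper: apply Theorem~\ref{Theorem10} to get $I(\nu_1)_{am}=I(\nu_2)_{bm}$ for all $m$, raise a test element to powers to convert this into the scalar relation $b\nu_1=a\nu_2$, and then use surjectivity of the discrete valuations onto $\ZZ$ to force $a=b$. The only cosmetic difference is that you derive $b\nu_1=a\nu_2$ via a floor-function limiting argument and compare value groups to get $a=b$, whereas the paper takes $a,b$ coprime up front, establishes the biconditional $\nu_1(f)=n\iff\nu_2(f)=\tfrac{b}{a}n$ directly from the ideal equalities, and concludes $a=b=1$ from surjectivity; both routes are equally valid.
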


 \begin{proof}
We have by Theorem \ref{Theorem10} that $I(\nu_1)_{an}=I(\nu_2)_{bn}$ for all $n$ and some positive integers $a$ and $b$ which we can take to be relatively prime.

Suppose that $0\ne f\in I(\nu_1)_n$. Then $f^a\in I(\nu_1)_{an}=I(\nu_2)_{bn}$ so that $a\nu_2(f)\ge bn$. If $f^a\in I(\nu_2)_{bn+1}$ then $f^{ab}\in I(\nu_2)_{b(bn+1)}=I(\nu_1)_{a(bn+1)}$ so that $\nu_1(f)>n$. Thus
\begin{equation}\label{eqX21}
\nu_1(f)=n\mbox{ if and only if }\nu_2(f)=\frac{b}{a}n.
\end{equation}
Further, (\ref{eqX21}) holds for every nonzero $f\in {\rm QF}(R)$ since $f$ is a quotient of nonzero elements of $R$.

Now the maps 
 $\nu_1:{\rm QF}(R)\setminus \{0\}\rightarrow \ZZ$ and 
$\nu_2:{\rm QF}(R)\setminus \{0\}\rightarrow \ZZ$ are surjective, so there exists $0\ne f\in {\rm QF}(R)$ such that $\nu_1(f)=1$  and there exists $0\ne g\in {\rm QF}(R)$ such that $\nu_2(g)=1$ which implies that $a=b=1$ since $a,b$ are relatively prime. Thus $\nu_1=\nu_2$.

\end{proof}

  \section{Bounded filtrations} \label{SecBound}
  
  \begin{Definition} Suppose that $R$ is a 
   local domain. A filtration $\mathcal I=\{I_n\}$ on $R$ is said to be a bounded filtration if there exists an integral divisorial filtration $\mathcal I(D)$ on $R$ such that
$$
\overline{R[\mathcal I]}=R[\mathcal I(D)].
$$
A filtration $\mathcal I=\{I_n\}$ on $R$ is said to be a bounded $s$-filtration if there exists an integral $s$-divisorial filtration $\mathcal I(D)$ on $R$ such that
$$
\overline{R[\mathcal I]}=R[\mathcal I(D)].
$$

A filtration $\mathcal I=\{I_n\}$ on $R$ is said to be a real bounded $s$-filtration if there exists a real $s$-divisorial filtration $\mathcal I(D)$ on $R$ such that
$$
\overline{R[\mathcal I]}=R[\mathcal I(D)].
$$
\end{Definition}

\begin{Lemma}\label{BdLemma3} Suppose that $R$ is an excellent local domain and $\mathcal I=\{I^n\}$ 
is the filtration of powers of a fixed  ideal $I$. Then $\mathcal I$ is bounded.
\end{Lemma}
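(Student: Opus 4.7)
The plan is first to rewrite $\overline{R[\mathcal I]}$ in a transparent form, and then to identify the filtration $\{\overline{I^n}\}$ as an integral divisorial filtration coming from the normalized blowup of $I$. By Remark \ref{Rem1}, the Lemma \ref{BdLemma1} description of $\overline{R[\mathcal I]}$ reduces to
$$
\overline{R[\mathcal I]}=\bigoplus_{n\ge 0}\overline{I^n}\,t^n,
$$
so it suffices to exhibit divisorial valuations $\nu_1,\ldots,\nu_r$ of ${\rm QF}(R)$ (nonnegative on $R$) and integers $a_1,\ldots,a_r\ge 0$ such that $\overline{I^n}=\bigcap_{i=1}^rI(\nu_i)_{na_i}$ for every $n\ge 0$, since then, because the $a_i$ are integers, $\lceil na_i\rceil=na_i$ and the filtration $\{\overline{I^n}\}$ is visibly an integral divisorial filtration $\mathcal I(D)$ with $\overline{R[\mathcal I]}=R[\mathcal I(D)]$.

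Second, I would produce the valuations from the geometry of the blowup. Let $\pi\colon X\to\operatorname{Spec}(R)$ be the normalization of the blowup of $I$. Since $R$ is excellent, $X$ is a Noetherian normal integral scheme, projective and birational over $\operatorname{Spec}(R)$, and essentially of finite type over $R$. The ideal sheaf $I\mathcal O_X$ is invertible, so there is a unique effective Cartier divisor $D$ on $X$ with $I\mathcal O_X=\mathcal O_X(-D)$. Writing $D=\sum_{i=1}^r a_iE_i$ with $E_1,\ldots,E_r$ the distinct prime Weil divisors in $\operatorname{Supp}(D)$ and $a_i\in\ZZ_{>0}$, each local ring $\mathcal O_{X,E_i}$ is a discrete valuation ring of ${\rm QF}(R)$ essentially of finite type over $R$, and therefore determines a divisorial valuation $\nu_i$ of $R$ with $a_i=\nu_i(I)$.

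Third, I would invoke the standard identification of integral closures of powers of an ideal with global sections on the normalized blowup (see for instance \cite[Proposition 9.6.6, Theorem 10.2.2]{HS}):
$$
\overline{I^n}=\Gamma(X,I^n\mathcal O_X)\cap R=\Gamma\bigl(X,\mathcal O_X(-nD)\bigr)\cap R=\bigcap_{i=1}^r\{f\in R\mid \nu_i(f)\ge na_i\}=\bigcap_{i=1}^rI(\nu_i)_{na_i},
$$
which is exactly the identity needed in Step 1, and concludes the proof.

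The main technical ingredient is the valuative identity $\overline{I^n}=\Gamma(X,\mathcal O_X(-nD))\cap R$, whose validity rests on $X$ being normal, Noetherian, and proper birational over the excellent domain $R$; once this classical fact is in hand there is no real obstacle. I do not anticipate any subtlety beyond the degenerate cases $I=R$ (where the filtration is trivial and hence bounded by default) and $I=(0)$ (where $\overline{R[\mathcal I]}=R$ and one takes the empty divisor).
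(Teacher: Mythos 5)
Your argument is correct and is essentially the paper's own: the proof of \cite[Lemma 5.9]{C3} that the paper cites (and which is reproduced at the start of Section 7 of this paper for the equimultiple discussion) proceeds exactly by writing $I\mathcal O_X=\mathcal O_X(-\sum a_iE_i)$ on the normalized blowup, noting that each $\mathcal O_{X,E_i}$ is a divisorial valuation ring since it is essentially of finite type over the excellent domain $R$, and combining $\overline{I^n}=\Gamma(X,\mathcal O_X(-nD))\cap R=\bigcap_i I(\nu_i)_{na_i}$ with Remark \ref{Rem1}. One small quibble: in the degenerate case $I=(0)$ the empty divisor gives the trivial filtration with Rees algebra $R[t]$, not $R=\overline{R[\mathcal I]}$, so that filtration is not in fact bounded; the lemma is implicitly about nonzero $I$.
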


The proof of Lemma \ref{BdLemma3}  for $m_R$-filtrations in \cite[Lemma 5.9]{C3} extends immediately to arbitrary divisorial filtrations.   

\begin{Proposition}\label{BoundProp} Suppose that $R$ is a local ring with $\dim N(\hat R)<d$,  $\mathfrak a$ is an $m_R$-primary ideal  and 
$$
\mathcal I(1),\ldots,\mathcal I(r),\mathcal I'(1),\ldots,\mathcal I'(r)
$$
 are $s$-filtrations such that 
$\overline{R(\mathcal I'(i))}=\overline{R(\mathcal I(i))}$ for $1\le i\le r$. Then
we have equality of all mixed multiplicities
\begin{equation}\label{eqB1}
e_s(\mathfrak a,\mathcal I(1)^{[d_1]},\ldots,\mathcal I(r)^{[d_r]})
=
e_s(\mathfrak a,\mathcal I'(1)^{[d_1]},\ldots,\mathcal I'(r)^{[d_r]}).
\end{equation}
\end{Proposition}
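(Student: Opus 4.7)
The plan is to reduce the desired equality of mixed multiplicities to the single multiplicity case handled by Theorem \ref{Rees1}. For each $i$, let $\mathcal K(i)=\{K(i)_n\}$ denote the filtration with $R[\mathcal K(i)]=\overline{R[\mathcal I(i)]}=\overline{R[\mathcal I'(i)]}$ provided by Lemma \ref{BdLemma1}. Then $\mathcal I(i)\subset\mathcal K(i)$ and $\mathcal I'(i)\subset\mathcal K(i)$, and $R[\mathcal K(i)]$ is integral over both $R[\mathcal I(i)]$ and $R[\mathcal I'(i)]$. It suffices to show that replacing a single $\mathcal I(i)$ by $\mathcal K(i)$ leaves every mixed multiplicity unchanged, for then iterating over $i=1,\ldots,r$ (and repeating the argument starting from the $\mathcal I'(i)$'s) yields
\[
e_s(\mathfrak a,\mathcal I(1)^{[d_1]},\ldots,\mathcal I(r)^{[d_r]})=e_s(\mathfrak a,\mathcal K(1)^{[d_1]},\ldots,\mathcal K(r)^{[d_r]})=e_s(\mathfrak a,\mathcal I'(1)^{[d_1]},\ldots,\mathcal I'(r)^{[d_r]}).
\]

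The replacement step reduces to the following lemma, which is the heart of the proof: if $\mathcal J\subset\mathcal I$ are filtrations on $R$ such that $R[\mathcal I]$ is integral over $R[\mathcal J]$, and $\mathcal I(2),\ldots,\mathcal I(r)$ are arbitrary filtrations, then for every $(n_1,\ldots,n_r)\in\NN^r$, setting $L_m=I_{mn_1}I(2)_{mn_2}\cdots I(r)_{mn_r}$ and $L'_m=J_{mn_1}I(2)_{mn_2}\cdots I(r)_{mn_r}$, the algebra $R[\mathcal L]$ is integral over $R[\mathcal L']$. Granting this, Theorem \ref{Rees1} applied to the inclusion $\mathcal L'\subset\mathcal L$ produces $s(\mathcal L)=s(\mathcal L')$ and $e_s(\mathfrak a,\mathcal L)=e_s(\mathfrak a,\mathcal L')$ for every $(n_1,\ldots,n_r)$ (note that $s(\mathcal L)\le s$ since $V(L_1)=\bigcup_i V(I(i)_{n_i})$). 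But by Theorem \ref{Theorem3}, these values are the values at $(n_1,\ldots,n_r)$ of the mixed multiplicity polynomial $H_s$ associated respectively to $(\mathcal I,\mathcal I(2),\ldots,\mathcal I(r))$ and to $(\mathcal J,\mathcal I(2),\ldots,\mathcal I(r))$. Two homogeneous polynomials agreeing on all of $\NN^r$ coincide, so all corresponding mixed multiplicities agree by Definition \ref{Def4}.

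The principal obstacle is producing the integral relation required by the lemma. Given $f\in L_m$, write $f=\sum_k a_kb_k$ with $a_k\in I_{mn_1}$ and $b_k\in I(2)_{mn_2}\cdots I(r)_{mn_r}$. Since $a_kt^{mn_1}\in R[\mathcal I]$ is integral over $R[\mathcal J]$ and $R[\mathcal J]$ is graded, extracting the homogeneous component of degree $Nmn_1$ of the integral equation yields
\[
a_k^N+c_{k,1}a_k^{N-1}+\cdots+c_{k,N}=0,\qquad c_{k,j}\in J_{jmn_1}.
\]
Multiplying by $b_k^N$ and regrouping powers of $a_kb_k$ produces
\[
(a_kb_k)^N+\sum_{j=1}^N(c_{k,j}b_k^j)(a_kb_k)^{N-j}=0,
\]
and the coefficients satisfy $c_{k,j}b_k^j\in J_{jmn_1}\cdot I(2)_{jmn_2}\cdots I(r)_{jmn_r}=L'_{jm}$. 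Read in $R[t]$, this is an integral relation for $a_kb_kt^m$ over $R[\mathcal L']$. Summing over $k$ and using that the elements integral over $R[\mathcal L']$ form a subring of $R[t]$ containing a set of $R$-algebra generators of $R[\mathcal L]$, the lemma follows. Everything after the lemma is bookkeeping via Theorem \ref{Rees1} and polynomial interpolation, so this explicit integral equation is the real crux.
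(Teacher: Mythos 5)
Your argument is correct, and it is essentially the argument the paper intends: the paper itself does not write out this proof but defers to \cite[Proposition 5.10]{C3} with the substitution of Theorem \ref{Rees1} for the earlier Rees-type theorems. Reconstructing that argument, you reduce to the key lemma that an integral extension $R[\mathcal J]\subset R[\mathcal I]$ propagates to the product filtrations $\mathcal L'=\{J_{mn_1}I(2)_{mn_2}\cdots I(r)_{mn_r}\}\subset\mathcal L=\{I_{mn_1}I(2)_{mn_2}\cdots I(r)_{mn_r}\}$, prove it by the homogeneous-component/multiply-by-$b_k^N$ manipulation of the integral equation (which correctly uses that $b_k^j\in I(2)_{jmn_2}\cdots I(r)_{jmn_r}$ because the $\mathcal I(i)$ are filtrations), then apply Theorem \ref{Rees1} to get $e_s(\mathfrak a,\mathcal L)=e_s(\mathfrak a,\mathcal L')$ for each $(n_1,\ldots,n_r)$, and finish by identifying these values with the mixed-multiplicity polynomials of Theorem \ref{Theorem3}. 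The verification that $s\ge s(\mathcal L)$ so that Theorems \ref{Rees1} and \ref{Theorem3} apply is the right thing to check and is correctly handled. One small remark worth being aware of: Theorem \ref{Rees1} (and the very definition of $e_s(\mathfrak a,\mathcal I;N)$ via Proposition \ref{Prop2}) is stated for $R$ analytically unramified, whereas the proposition as printed assumes only $\dim N(\hat R)<d$; these coincide for $s=0$ (the $m_R$-primary case of \cite{C3}) but for $s>0$ your proof, like the paper's, implicitly uses the analytically unramified hypothesis under which the multiplicities $e_s$ are defined. That discrepancy lies in the paper's statement, not in your reasoning.
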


The proof is as the proof for $m_R$-filtrations, given in \cite[Proposition 5.10]{C3}. The references to \cite[Theorem 6.9]{CSS} 
and \cite[Appendix]{C4} in the proof in \cite{C3} must be replaced with a reference to Theorem \ref{Rees1} of this paper.

\begin{Theorem}\label{RBT} Suppose that $R$ is an excellent local domain, $\mathfrak a$ is an $m_R$-primary ideal, $\mathcal I(1)$ is a real  bounded $s$-filtration and $\mathcal I(2)$ is an arbitrary filtration  such that $\mathcal I(1)\subset \mathcal I(2)$, so that $s(\mathcal I(2))\le s=s(\mathcal I(1))$.  Then    $e_s(\mathfrak a,\mathcal I(1))=e_s(\mathfrak a,\mathcal I(2))$ if and only if there is equality of  integral closures  
$$
\overline{\sum_{m\ge 0}I(1)_mt^m}=\overline{\sum_{m\ge 0}I(2)_mt^m}
$$
  in $R[t]$.
 \end{Theorem}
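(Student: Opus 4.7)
The plan is to reduce Theorem \ref{RBT} to Theorem \ref{Rees1} and Theorem \ref{Theorem9} by passing through integral closures. The key idea is that the bounded $s$-filtration hypothesis provides an $s$-divisorial filtration $\mathcal I(D)$ with $\overline{R[\mathcal I(1)]}=R[\mathcal I(D)]$, and the integral closure $\overline{R[\mathcal I(2)]}$ is another graded $R$-subalgebra of $R[t]$ of the form $R[\mathcal I']$ by \cite[Theorem 2.3.2]{HS}; so the problem reduces to comparing $\mathcal I(D)$ and $\mathcal I'$, which is exactly the hypothesis of Theorem \ref{Theorem9}.

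For the "if" direction, assume $\overline{R[\mathcal I(1)]}=\overline{R[\mathcal I(2)]}$. Because $\mathcal I(1)\subset\mathcal I(2)$, we have the chain $R[\mathcal I(1)]\subset R[\mathcal I(2)]\subset\overline{R[\mathcal I(2)]}=\overline{R[\mathcal I(1)]}$, so $R[\mathcal I(2)]$ is integral over $R[\mathcal I(1)]$. Theorem \ref{Rees1} then yields $s(\mathcal I(1))=s(\mathcal I(2))$ and $e_s(\mathfrak a,\mathcal I(1))=e_s(\mathfrak a,\mathcal I(2))$.

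For the "only if" direction, assume $e_s(\mathfrak a,\mathcal I(1))=e_s(\mathfrak a,\mathcal I(2))$. Let $\mathcal I(D)$ be an $s$-divisorial filtration with $\overline{R[\mathcal I(1)]}=R[\mathcal I(D)]$ given by the definition of bounded $s$-filtration, and write $\overline{R[\mathcal I(2)]}=R[\mathcal I']$ with $\mathcal I'=\{I'_n\}$. By Lemma \ref{LemmaIC3}, $s(\mathcal I')=s(\mathcal I(2))\le s=s(\mathcal I(1))=s(\mathcal I(D))$, so the dimension hypothesis of Theorem \ref{Theorem9} is satisfied. Since integral closures are integral extensions of the underlying algebras, Theorem \ref{Rees1} applied to $\mathcal I(1)\subset\mathcal I(D)$ and to $\mathcal I(2)\subset\mathcal I'$ gives $e_s(\mathfrak a,\mathcal I(1))=e_s(\mathfrak a,\mathcal I(D))$ and $e_s(\mathfrak a,\mathcal I(2))=e_s(\mathfrak a,\mathcal I')$. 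Combining with the hypothesis, $e_s(\mathfrak a,\mathcal I(D))=e_s(\mathfrak a,\mathcal I')$. Moreover $\mathcal I(1)\subset\mathcal I(2)$ implies $\overline{R[\mathcal I(1)]}\subset\overline{R[\mathcal I(2)]}$, i.e.\ $\mathcal I(D)\subset\mathcal I'$.

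At this point Theorem \ref{Theorem9} applies directly to the pair $\mathcal I(D)\subset\mathcal I'$, yielding $I(mD)=I'_m$ for all $m\ge 0$, so $R[\mathcal I(D)]=R[\mathcal I']$, equivalently $\overline{R[\mathcal I(1)]}=\overline{R[\mathcal I(2)]}$, which is the desired conclusion. There is no real obstacle here: once Theorems \ref{Rees1} and \ref{Theorem9} are in hand, the argument is essentially a bookkeeping of integral closures and dimensions, with the only subtlety being to check that the dimension condition $s(\mathcal I')\le s$ required by Theorem \ref{Theorem9} does hold (which it does, via Lemma \ref{LemmaIC3} and the hypothesis $s(\mathcal I(2))\le s$).
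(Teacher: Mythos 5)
Your proof is correct and takes essentially the same approach as the paper: both directions reduce to Theorem~\ref{Rees1} via the observation that passing to integral closures does not change $e_s$, and the ``only if'' direction then reduces to Theorem~\ref{Theorem9} applied to the pair $\mathcal I(D)\subset\mathcal I'$ obtained by taking integral closures in $R[t]$. You are somewhat more explicit than the paper in spelling out why $R[\mathcal I(2)]$ is integral over $R[\mathcal I(1)]$ in the ``if'' direction and in verifying the dimension inequality needed for Theorem~\ref{Theorem9} (though the latter is already automatic from the inclusion $\mathcal I(D)\subset\mathcal I'$ as the statement of Theorem~\ref{Theorem9} notes), but these are cosmetic differences.
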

 
 \begin{proof}  First suppose that there is equality of integral closures
  $$
\overline{\sum_{m\ge 0}I(1)_mt^m}=\overline{\sum_{m\ge 0}I(2)_mt^m}
$$ 
in $R[t]$. Then $e_s(\mathfrak a,\mathcal I(1))=e_s(\mathfrak a,\mathcal I(2))$ by Theorem \ref{Rees1}. 

Now suppose that $e_s(\mathfrak a,\mathcal I(1))=e_s(\mathfrak a,\mathcal I(2))$.
 Let $\mathcal I(D_1)$ be  the real divisorial $s$-filtration such that $\overline{R[\mathcal I(1)]}=R[\mathcal I(D_1)]$. 
 Let $\mathcal J$ be the filtration on $R$ such that $R[\mathcal J]=\overline{R[\mathcal I(2)]}$.
 Then 
 $$
 R[\mathcal I(D_1)]\subset \overline{R[\mathcal I(2)]}=R[\mathcal J]
 $$
  so that $\mathcal I(D_1)\subset \mathcal J$. We have that $e_s(\mathcal I(1))=e_s(\mathcal I(D_1))$ and 
  $e_s(\mathcal I(2))=e_s(\mathcal J)$ by Theorem \ref{Rees1}. Thus $e_s(\mathcal I(D_1))=e_s(\mathcal J)$ and so $R[\mathcal I(D_1)]=R[\mathcal J]$ by Theorem \ref{Theorem9}.
 
 Thus the conclusion of the theorem holds for $\mathcal I(1)$ and $\mathcal I(2)$.
   \end{proof}

\begin{Example}\label{Example1} Theorem \ref{RBT} does not extend to arbitrary bounded filtrations, or to arbitrary divisorial filtrations. 
\end{Example}
There exist bounded filtrations $\mathcal I(1)\subset \mathcal I(2)$ with $s(\mathcal I(1))=s(\mathcal I(2))$ 
and $e_s(m_R,\mathcal I(1))=e_s(m_R,\mathcal I(2))$ but $\overline{R[\mathcal I(1)]}\ne \overline{R[\mathcal I(2)]}$.
Let $k$ be an algebraically closed field and let $R=k[x,y,z]_{(x,y,z)}$, a local ring of the polynomial ring over $k$ in three variables. 
Let $\mfp$ be a height two prime ideal in $R$ such that  the symbolic algebra $\oplus_{n\ge 0}\mathfrak p^{(n)}$ is not a finitely generated $R$-algebra. Some examples where this algebra is not finitely generated are given  in \cite{Ro} and \cite{GNW}. The $\mathfrak p$-adic filtration $\mathcal I(1)=\{\mathfrak p^n\}$ is bounded by Lemma \ref{BdLemma3} and the filtration of symbolic powers of $\mathfrak p$, $\mathcal I(2)=\{\mathfrak p^{(n)}\}$, is a divisorial filtration. We have that $s(\mathcal I(1))=s(\mathcal I(2))=1$ and since  $\mathfrak p^nR_{\mathfrak p}=\mathfrak p^{(n)}R_{\mathfrak p}$ for all $n$, we have by Proposition \ref{Prop2} that 
$$
e_1(m_R,\mathcal I(1))=e_{R_{\mathfrak p}}(\mathcal I(1)_{\mathfrak p})e_{m_R}(R/\mathfrak p)
=e_{R_{\mathfrak p}}(\mathcal I(2)_{\mathfrak p})e_{m_R}(R/\mathfrak p)
=e_1(m_R,\mathcal I(2)).
$$
But we cannot have that $R[\mathcal I(2)]$ is integral over $R[\mathcal I(1)]$ since its integral closure $\overline{R[\mathcal I(1)]}$
 is a finitely generated $R$-algebra, and $R[\mathcal I(2)]=\sum_{n\ge 0}\mathfrak p^{(n)}t^n$ is not.  The reason for this example is because of the existence of embedded primes in the filtration $\mathcal I(1)$.
 
 We now modify the example to show that Theorem \ref{RBT} does not extend to divisorial filtrations. Let $\mathcal I(3)$ be the filtration $\mathcal I(3)=\{\overline{\mathfrak p^n}\}$. Let $X$ be the  normalization of the blowup of $\mathfrak p$. Then 
 $\mathfrak p\mathcal O_X$ is invertible on $X$ and
  $\overline{\mathfrak p^n}=\Gamma(X,\mathfrak p^n\mathcal O_X)\cap R$ for all $n$, so that $\mathcal I(3)$ is a divisorial filtration on $R$.  
  Since $R[\mathcal I(3)]=\overline{R[\mathcal I(1)]}$ by Remark \ref{Rem1}, we have that $e_1(m_R,\mathcal I(3))=e_1(m_R,\mathcal I(1))$ by Theorem \ref{Rees1}. Thus $e_1(m_R,\mathcal I(3))=e_1(m_{R_1},\mathcal I(2))$. Now $R[\mathcal I(3)]\subset R[\mathcal I(2)]$ since $R[\mathcal I(2)]$ is integrally closed in $R[t]$ by Lemma \ref{LemmaICD}. Thus $\mathcal I(3)\subset \mathcal I(2)$.
  
  \begin{Theorem}\label{TRSKT} Suppose that $R$ is a $d$-dimensional excellent local domain, $\mathfrak a$ is an $m_R$-primary ideal   and $\mathcal I(1)$ and $\mathcal I(2)$ are  two nontrivial bounded $s$-filtrations. Then the following are equivalent. 
\begin{enumerate}
\item[1)]  The Minkowski inequality
$$
e_s(\mathfrak a,\mathcal I(1)\mathcal I(2))^{\frac{1}{d-s}}=e_s(\mathfrak a,\mathcal I(1))^{\frac{1}{d-s}}+e_s(\mathfrak a,\mathcal I(2))^{\frac{1}{d-s}}
$$
holds.
\item[2)] There exist positive integers $a,b$ such that there is equality of  integral closures
$$
\overline{\sum_{n \ge 0}I(1)_{an}t^n}=\overline{\sum_{n\ge 0}I(2)_{bn}t^n}
$$
 in $R[t]$.
\end{enumerate}
 \end{Theorem}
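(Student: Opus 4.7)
The plan is to prove $(2)\Rightarrow(1)$ directly from Theorem \ref{TheoremN10}, and to prove $(1)\Rightarrow(2)$ by reducing to the corresponding statement for $s$-divisorial filtrations, namely Theorem \ref{Theorem10}. The reverse direction is immediate: since $\mathcal I(1)$ and $\mathcal I(2)$ are nontrivial bounded $s$-filtrations we have $s(\mathcal I(1))=s(\mathcal I(2))=s<d$, so Theorem \ref{TheoremN10} applies and gives the Minkowski equality from the hypothesized equality of integral closures.

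For the forward direction, write $\overline{R[\mathcal I(i)]}=R[\mathcal I(D_i)]$ where $\mathcal I(D_i)$ is an integral $s$-divisorial filtration for $i=1,2$; by Lemma \ref{LemmaICD} these Rees algebras are integrally closed, and by Lemma \ref{LemmaIC1} each $\mathcal I(D_i)$ is nontrivial (otherwise $\mathcal I(i)$ would be trivial). First I would apply Proposition \ref{BoundProp} to the pairs $(\mathcal I(i),\mathcal I(D_i))$ simultaneously to obtain
$$
e_s(\mathfrak a,\mathcal I(1)^{[d-s-j]},\mathcal I(2)^{[j]})=e_s(\mathfrak a,\mathcal I(D_1)^{[d-s-j]},\mathcal I(D_2)^{[j]})
$$
for every $0\le j\le d-s$. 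Part (i) of Lemma \ref{LemnMineq} asserts that Minkowski's equality (statement 4) is equivalent to the Teissier system $e_j^{d-s}=e_0^{d-s-j}e_{d-s}^j$ (statement 3) \emph{without} any positivity hypothesis, so the hypothesized equality for $(\mathcal I(1),\mathcal I(2))$ transfers to Minkowski equality for $(\mathcal I(D_1),\mathcal I(D_2))$. Theorem \ref{Theorem10}, applied to the nontrivial integral $s$-divisorial pair $(\mathcal I(D_1),\mathcal I(D_2))$, then produces $a,b\in\ZZ_{>0}$ with $I(amD_1)=I(bmD_2)$ for all $m\in\NN$.

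It remains to translate this equality of divisorial filtrations back into an equality of integral closures of Veronese-like Rees algebras of the original filtrations. For any $a\ge 1$ the subfiltration $\mathcal I(1)^{(a)}:=\{I(1)_{an}\}$ is itself a filtration, and by Lemma \ref{BdLemma1}
$$
\overline{\textstyle\sum_{n\ge 0} I(1)_{an}t^n}=\sum_{n\ge 0}K_n t^n,\qquad K_n=\{f\in R:f^r\in\overline{I(1)_{arn}}\text{ for some }r>0\}.
$$
Applying Lemma \ref{BdLemma1} to $\mathcal I(1)$ and using $\overline{R[\mathcal I(1)]}=R[\mathcal I(D_1)]$ identifies the bracket with $I(mD_1)$ when the index is $m$, hence $K_n=I(anD_1)=I(n\cdot aD_1)$, so $\overline{\sum I(1)_{an}t^n}=R[\mathcal I(aD_1)]$. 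The symmetric computation gives $\overline{\sum I(2)_{bn}t^n}=R[\mathcal I(bD_2)]$, and the relation $I(amD_1)=I(bmD_2)$ produced by Theorem \ref{Theorem10} yields $R[\mathcal I(aD_1)]=R[\mathcal I(bD_2)]$. This establishes $(2)$.

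The main obstacle is conceptual rather than technical: one has to make sure that Proposition \ref{BoundProp} transfers the \emph{entire} array of mixed multiplicities (not merely the multiplicities $e_s(\mathfrak a,\mathcal I(i))$ and $e_s(\mathfrak a,\mathcal I(1)\mathcal I(2))$), so that Minkowski equality, in its equivalent Teissier form, can be lifted from $(\mathcal I(1),\mathcal I(2))$ to $(\mathcal I(D_1),\mathcal I(D_2))$; once the problem is on the divisorial side all the deep work is absorbed into Theorem \ref{Theorem10}, and the return trip through Lemma \ref{BdLemma1} is a direct computation. A minor care is to verify that the divisorial envelopes $\mathcal I(D_i)$ are themselves nontrivial, so that Theorem \ref{Theorem10} is genuinely applicable.
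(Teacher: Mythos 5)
Your argument is correct and follows the same reduction as the paper: Proposition \ref{BoundProp} transfers the full array of mixed multiplicities to the divisorial envelopes $\mathcal I(D_1),\mathcal I(D_2)$, and Theorem \ref{Theorem10} then finishes. The detour through Lemma \ref{LemnMineq}(i) is unnecessary --- once all $e_j$, and hence $e_s(\mathfrak a,\mathcal I(1)\mathcal I(2))=P_s(1,1)$, coincide for the two pairs, the Minkowski equality transfers immediately --- while your explicit back-translation via Lemma \ref{BdLemma1} identifying $\overline{\sum_{n\ge 0} I(1)_{an}t^n}$ with $R[\mathcal I(aD_1)]$ usefully spells out a step the paper's proof leaves implicit.
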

 
 \begin{proof} Let $\mathcal I(D_1)$ and $\mathcal I(D_2)$ be  integral divisorial $s$-filtrations such that $\overline{R(\mathcal I(1))}=R(\mathcal I(D_1))$ and $\overline{R(\mathcal I(2))}=R(\mathcal I(D_2))$. By Proposition \ref{BoundProp}, we have equality of functions 
 $$
 \lim_{m\rightarrow \infty} \frac{e_s(\mathfrak a,R/I(i)_{mn_1}I(i)_{mn_2})}{m^d/(d-s)!}
 =\lim_{m\rightarrow \infty} \frac{e_s(\mathfrak a,R/I(D_i)_{mn_1}I(D_i)_{mn_2})}{m^d/(d-s)!}
  $$
  for $i=1,2$ and all $n_1,n_2\in \NN$. Since 1) and 2) are equivalent for the integral $s$-divisorial filtrations $\mathcal I(D_1)$ and $\mathcal I(D_2)$ by Theorem \ref{Theorem10}, they are also equivalent for the bounded $s$-filtrations  $\mathcal I(1)$ and $\mathcal I(2)$.
 \end{proof}
 
 \begin{Example}\label{Example2} Theorem \ref{TRSKT} does not extend to arbitrary bounded  filtrations or to arbitrary divisorial filtrations. 
\end{Example}

 The example constructed in Example \ref{Example1} gives an example. We have that $d=\dim R=3$, $s=\dim R/p=1$,  $d-s=\dim R_p=2$ and $\mathfrak p^nR_{\mathfrak p}=\mathfrak p^{(n)}R_{\mathfrak p}$ for all $n$, so that $\mathcal I(1)_{\mathfrak p}=\mathcal I(2)_{\mathfrak p}$ are $0$-divisorial filtrations on $R_{\mathfrak p}$. Thus 
 $$
 e_0(m_{R_{\mathfrak p}},\mathcal I(1)_{\mathfrak p}\mathcal I(2)_{\mathfrak p})^{\frac{1}{\dim R_{\mathfrak p}}}
 =e_0(m_{R_{\mathfrak p}},\mathcal I(1)_{\mathfrak p})^{\frac{1}{\dim R_{\mathfrak p}}}+e_0(m_{R_{\mathfrak p}},\mathcal I(2)_{\mathfrak p})^{\frac{1}{\dim R_{\mathfrak p}}}
   $$
   by Theorem \ref{TRSKT}. Since 
   $$
   \begin{array}{l}
   e_s(m_R,\mathcal I(1))=e_0(m_{R_{\mathfrak p}},\mathcal I(1)_{\mathfrak p})
   e_{m_R}(R/\mathfrak p),\,\,
   e_s(m_R,\mathcal I(2))=e_0(m_{R_{\mathfrak p}},\mathcal I(2)_{\mathfrak p})
   e_{m_R}(R/\mathfrak p)\mbox{ and }\\   e_s(m_R,\mathcal I(1)\mathcal I(2))=e_0(m_{\mathfrak p},\mathcal I(1)_{\mathfrak p}\mathcal I(2)_{\mathfrak p})
   e_{m_R}(R/\mathfrak p)
   \end{array}
   $$ 
 we have that the Minkowski equality
  $$
 e_s(m_R,\mathcal I(1)\mathcal I(2))^{\frac{1}{d-s}}
 =e_s(m_R,\mathcal I(1))^{\frac{1}{d-s}}+e_s(m_R,\mathcal I(2))^{\frac{1}{d-s}}
   $$
 holds between $\mathcal I(1)$ and $\mathcal I(2)$, but as shown in  Example \ref{Example1}, there do not exist $a,b\in \ZZ_{>0}$ such that $\overline{\mathfrak p^{an}}=\mathfrak p^{(bn)}$ for all $n\in \NN$.
 
 Similarly, we have that $\mathcal I(3)\subset \mathcal I(2)$ are divisorial filtrations which satisfy the Minkowski equality, but there do not exist $a,b\in \ZZ_{>0}$ such that $\overline{\mathfrak p^{an}}=\mathfrak p^{(bn)}$ for all $n\in \NN$.

\section{equimultiple ideals and bounded $s$-filtrations}
  
 The analytic spread of an ideal $I$ in a local ring $R$ is defined to be
  $$
  \ell(I)=\dim R[It]/m_RR[It].
  $$
  We have  inequalities 
  $$
  {\rm ht}(I)\le \ell(I)\le \dim R,
  $$
 proven for instance in \cite[Corollary 8.3.9]{HS}. An ideal $I$ for which the equality $\mbox{ht}(I)=\ell(I)$ holds is called equimultiple. 
 
  B\"oger  generalized Rees's theorem to equimultiple ideals in a formally equidimensional local ring.  
  
 \begin{Theorem}(\cite{EB}, also \cite[Corollary 11.3.2]{HS}) Suppose that  $R$ is a formally equidimensional local ring and $I\subset J$ are two ideals such that $\ell(I)=\mbox{ht}(I)$ ($I$ is equimultiple). Then $J\subset \overline{I}$ if and only $e_{R_P}(I_P)=e_{R_P}(J_P)$ for every prime $P$ minimal over $I$.
 \end{Theorem}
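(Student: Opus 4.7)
The plan is to reduce B\"oger's theorem to Rees's classical theorem for $m_R$-primary ideals in a formally equidimensional local ring, applied at each minimal prime of $I$, and then to glue the resulting local containments using the fact that $\overline{I}$ has no embedded primes when $I$ is equimultiple.

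The direction ``$J\subset\overline{I}\Rightarrow e_{R_P}(I_P)=e_{R_P}(J_P)$'' is immediate: $I\subset J\subset\overline{I}$ implies $R[Jt]$ is integral over $R[It]$, and after localization $R_P[J_Pt]$ is integral over $R_P[I_Pt]$. Both $I_P$ and $J_P$ are $PR_P$-primary since $P$ is minimal over $I$, so Theorem~\ref{Rees1*}$(ii)$ applied to the $I$-adic and $J$-adic filtrations in $R_P$ yields the desired equality.

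For the converse, I would fix $P\in\Min(I)$ and work in $R_P$. Since $R$ is formally equidimensional, so is $R_P$ by Ratliff's theorem (\cite[Theorem B.5.1]{HS}). The ideals $I_P\subset J_P$ are both $PR_P$-primary and satisfy $e_{R_P}(I_P)=e_{R_P}(J_P)$, so the classical Rees theorem \cite[Theorem 11.3.1]{HS} applied in $R_P$ gives $J_P\subset \overline{I_P}=(\overline{I})_P$.

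To globalize, I will show that $\Ass(R/\overline{I})\subset\Min(I)$, which forces $\overline{I}=\bigcap_{P\in\Min(I)}\bigl((\overline{I})_P\cap R\bigr)$ and hence $J\subset\overline{I}$. This ``no embedded primes'' step is the main obstacle and rests on Ratliff's theorem on asymptotic prime divisors, which in a formally equidimensional local ring characterizes $\bigcup_n\Ass(R/\overline{I^n})$ as the set of primes $P\supset I$ with $\ell(I_P)=\height(P)$. Since $\Ass(R/\overline{I})\subset\bigcup_n\Ass(R/\overline{I^n})$ (the sequence of associated primes is ascending in $n$), any $P\in\Ass(R/\overline{I})$ satisfies $\ell(I_P)=\height(P)$; combined with $\ell(I_P)\le\ell(I)=\height(I)\le\height(P)$, this chain must collapse to $\height(P)=\height(I)$, i.e.\ $P\in\Min(I)$, finishing the proof.
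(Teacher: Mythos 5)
Your argument is correct in substance, but note that the paper does not prove this theorem at all: it is stated purely as a citation to B\"oger \cite{EB} and to \cite[Corollary 11.3.2]{HS}, so there is no in-paper proof to match. What you have written is a sound reconstruction of the standard argument: localize at each $P\in\Min(I)$, apply Rees's theorem there, and glue using the absence of embedded primes of $\overline{I}$. Two remarks. First, your citation of Theorem \ref{Rees1*}$(ii)$ for the easy direction is not quite legitimate, since that theorem assumes $R$ (hence $R_P$) analytically unramified, which formal equidimensionality does not give; you should instead invoke the classical fact that a reduction of an $m$-primary ideal has the same multiplicity (\cite[Proposition 11.2.1]{HS} or the easy half of \cite[Theorem 11.3.1]{HS}), which needs no such hypothesis. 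Second, your key globalization input --- that in a formally equidimensional local ring every $P\in\Ass(R/\overline{I})$ satisfies $\ell(I_P)=\height(P)$, which together with $\ell(I_P)\le\ell(I)=\height(I)\le\height(P)$ forces $P\in\Min(I)$ --- is a genuinely deep theorem of McAdam--Ratliff, but it is exactly the fact the authors themselves invoke (via \cite[Theorem 2.12]{Ra} and \cite[Corollary 5.4.2]{HS}) in the discussion following Corollary \ref{CorAS31*}, where they note that every associated prime of $\overline{I^n}$ has height $\ell(I)$ for an equimultiple ideal; alternatively one can reach the same conclusion more economically by passing to an infinite residue field, replacing $I$ by a minimal reduction generated by $\height(I)$ elements, and quoting \cite[Corollary 5.4.2]{HS} directly. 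With those citations repaired, the proof is complete.
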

 
 Let $I$ be an ideal in an excellent local domain $R$ and  $R[I]=\bigoplus_{n\ge 0}I^n$ be the Rees algebra of $R$,
 $\pi: X=\mbox{Proj}(R[I])\rightarrow \mbox{Spec}(R)$ be the blowup of $I$. Let $B$ be the normalization of $R[I]$ in its quotient field, which is a finitely generated graded $R$-algebra. Let $Y=\mbox{Proj}(B)$, the ``normalized blowup'' of $I$. Let $\alpha:Y\rightarrow \mbox{Spec}(R)$ be the  natural composition map $Y\rightarrow X\stackrel{\pi}{\rightarrow}\mbox{Spec}(R)$.
 
 Let $p\in \mbox{Spec}(R)$ and  $\kappa(p):=(R/p)_p$. Then (by definition) 
 $$
 \pi^{-1}(p)= X\times_{\mbox{Spec}(R)}\mbox{Spec}(\kappa(p)) =\mbox{Proj}(R[I]\otimes_R \kappa(p))
 $$
  and 
  $$
  \alpha^{-1}(p)=Y\times_{\mbox{Spec}(R)}\mbox{Spec}(\kappa(p))=\mbox{Proj}(B\otimes_R\kappa(p)).
 $$
 
 Since $Y\rightarrow X$ is finite, we have (by \cite[Theorems A.6 and A.7]{BH}) that if $p\in \mbox{Spec}(R)$, then 
$$
\dim \pi^{-1}(p)=\dim\alpha^{-1}(p).
$$
We also have that ``upper semicontinuity of fiber dimension'' holds; that is, if $p\subset p'$ are prime ideals in $R$, then  
$$
\dim \pi^{-1}(p)\le \dim \pi^{-1}(p')
$$
by \cite[(IV.13.1.5)]{EGA}.
 
 Write $I\mathcal O_{Y}=\mathcal O_Y(-a_1E_1-\cdots -a_rE_r)$, where $E_1,\ldots,E_r$ are prime Weil divisors on $Y$. Then $I\mathcal O_Y$ is an ample Cartier divisor on $Y$. Let $\nu_1,\ldots,\nu_r$ be the corresponding valuations on the quotient field of $R$. We have that for all $n\ge 0$,
 $$
 \overline{I^n}=\Gamma(Y,I^n\mathcal O_Y)\cap R= I(\nu_1)_{a_1n}\cap \cdots\cap I(\nu_r)_{a_rn}
 $$
 is a  primary decomposition of $\overline{ I^n}$.

 Now we have that for $p\in \mbox{Spec}(R)$,
 $\dim \alpha^{-1}(p)\le\dim R_p-1$  and $\dim \alpha^{-1}(p)=\dim R_p-1$ if and only if some $\nu_i$ dominates $p$. Further,
 if $p$ does not contain $I$, then $\dim \alpha^{-1}(p)=0$, and if $p$ is a minimal prime of $I$, then $\dim \alpha^{-1}(p)=\dim R_p-1$.

 \begin{Proposition}\label{PropAS30*} Suppose that $I$ is an ideal of an excellent local domain $R$ and $I$ is equimultiple. Let $s=\dim R-\mbox{ht}(I)$. Then there exist divisorial valuations $\nu_1,\ldots,\nu_r$ such that the center of $\nu_i$ on $R$ has dimension $s$ for all $i$, and $a_1,\ldots, a_r\in \ZZ_{>0}$ such that 
 $$
 \overline {I^n}=I(\nu_1)_{a_1n}\cap\cdots\cap I(\nu_r)_{a_rn}
 $$
 for all $n\in \NN$.
 \end{Proposition}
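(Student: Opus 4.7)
The decomposition $\overline{I^n} = I(\nu_1)_{a_1n}\cap\cdots\cap I(\nu_r)_{a_rn}$ has already been set up in the discussion preceding the proposition: I take $Y$ to be the normalized blowup of $I$, write $I\mathcal O_Y = \mathcal O_Y(-\sum a_iE_i)$ with the $E_i$ the prime Weil divisors in the support of $I\mathcal O_Y$, and let $\nu_i$ be the valuation corresponding to $E_i$. So the entire content of the proposition is the claim that $\dim R/p_i = s$, where $p_i = m_{\nu_i}\cap R$ is the center of $\nu_i$ on $R$. My plan is to convert this to the statement $\mbox{ht}(p_i) = \mbox{ht}(I)$ and prove the two inequalities.

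Since $R$ is an excellent local domain it is universally catenary and equidimensional, so $\dim R/p_i + \mbox{ht}(p_i) = \dim R$ for every $p_i \in \Spec(R)$; thus $\dim R/p_i = s$ is equivalent to $\mbox{ht}(p_i) = \mbox{ht}(I)$. The inequality $\mbox{ht}(p_i)\ge \mbox{ht}(I)$ is immediate: $E_i$ lies in the support of $I\mathcal O_Y$, so $\alpha(E_i)\subset V(I)$ in $\Spec(R)$, hence $I\subset p_i$. The reverse inequality $\mbox{ht}(p_i)\le \mbox{ht}(I)$ is where equimultiplicity enters, and will be obtained from the fiber-dimension estimates recalled in the paragraphs just before the proposition.

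The argument for $\mbox{ht}(p_i)\le \mbox{ht}(I)$ runs as follows. The valuation $\nu_i$ has center $p_i$ on $R$ and corresponds to a prime divisor $E_i$ on $Y$; in particular $\nu_i$ dominates $R_{p_i}$, so the generic point of $E_i$ lies in $\alpha^{-1}(p_i)$ and has codimension $1$ in $Y$. By the remarks preceding the proposition this forces $\dim\alpha^{-1}(p_i) = \dim R_{p_i}-1 = \mbox{ht}(p_i)-1$, and $\dim\pi^{-1}(p_i) = \dim\alpha^{-1}(p_i)$ since $Y\to X$ is finite. Upper semicontinuity of fiber dimension then gives $\dim\pi^{-1}(p_i)\le \dim\pi^{-1}(m_R)$. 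Finally, $\dim\pi^{-1}(m_R) = \ell(I)-1$ (this is the standard fact that the special fiber of the blowup has dimension $\ell(I)-1$), and by the equimultiplicity hypothesis $\ell(I) = \mbox{ht}(I)$. Chaining these yields $\mbox{ht}(p_i)-1\le \mbox{ht}(I)-1$, which completes the proof.

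The only real subtlety is ensuring that the fiber-dimension identities used here are legitimate in the generality of excellent local domains; the key inputs are that (i) the generic point of $E_i$ has codimension $1$ in $Y$ and maps to $p_i$, which forces the fiber $\alpha^{-1}(p_i)$ to have the maximal possible dimension $\dim R_{p_i}-1$; and (ii) upper semicontinuity of fiber dimension for the projective morphism $\pi$, which is the citation to \cite[(IV.13.1.5)]{EGA} already invoked in the preceding discussion. Both are available, so the proof is essentially a bookkeeping argument once the normalized blowup is in place.
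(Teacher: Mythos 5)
Your proof is correct and follows essentially the same route as the paper: use equimultiplicity to identify $\dim\alpha^{-1}(m_R) = \ell(I)-1 = \mbox{ht}(I)-1$, note that the center $p_i$ of $\nu_i$ being dominated by a prime Weil divisor forces $\dim\alpha^{-1}(p_i)=\dim R_{p_i}-1$, and then close the argument with upper semicontinuity of fiber dimension and $I\subset p_i$. The paper presents the inequality chain in a single display rather than splitting it into $\mbox{ht}(p_i)\ge \mbox{ht}(I)$ and $\mbox{ht}(p_i)\le\mbox{ht}(I)$, but the content is identical.
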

 
 \begin{proof} By our assumption, $\dim \alpha^{-1}(m_R)=\mbox{ht}(I)-1$. Suppose that $p\in \mbox{Spec}(R)$.   If $p$ is  dominated by some $\nu_i$, then $I\subset p$ and
 $$
 \mbox{ht}(I)-1\le \dim R_p-1=\dim \alpha^{-1}(p)\le \dim \alpha^{-1}(m_R)=\mbox{ht}(I)-1,
 $$
 so that $\mbox{ht}(p)=\mbox{ht}(I)$. 
\end{proof}

\begin{Corollary}\label{CorAS31*} Suppose that $R$ is an excellent local domain and $I$ is an equimultiple ideal on $R$. Then the $I$-adic filtration $\mathcal I=\{I^n\}$ is a bounded $s$-filtration, where $s=\dim R-\mbox{ht}(I)$.
\end{Corollary}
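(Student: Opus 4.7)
The plan is to combine Proposition \ref{PropAS30*} with Remark \ref{Rem1} and Lemma \ref{BdLemma1}, which together essentially assemble the statement from ingredients already in hand. Concretely, since $\mathcal I=\{I^n\}$ is the $I$-adic filtration, Remark \ref{Rem1} (read through Lemma \ref{BdLemma1}) gives that the integral closure of $R[\mathcal I]$ in $R[t]$ is
\[
\overline{R[\mathcal I]}=\sum_{n\ge 0}\overline{I^n}\,t^n.
\]
So to exhibit $\mathcal I$ as a bounded $s$-filtration it suffices to produce an integral $s$-divisorial filtration $\mathcal I(D)=\{I(nD)\}$ with $I(nD)=\overline{I^n}$ for all $n\ge 0$.

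For this, I would apply Proposition \ref{PropAS30*} directly: the equimultiple hypothesis furnishes divisorial valuations $\nu_1,\ldots,\nu_r$ whose centers on $R$ all have dimension $s=\dim R-\mathrm{ht}(I)$, together with positive integers $a_1,\ldots,a_r$, such that
\[
\overline{I^n}=I(\nu_1)_{a_1n}\cap\cdots\cap I(\nu_r)_{a_rn}
\]
for every $n\in\NN$. Because each $a_i$ is a positive integer, $\lceil na_i\rceil=na_i$, so the right-hand side matches the defining formula \eqref{eqDF} for an integral $s$-divisorial filtration with parameters $(\nu_i,a_i)$. Let $\phi\colon X\to \Spec(R)$ be a representation of these $s$-valuations as in Lemma \ref{Lemma0}, and set $D=a_1E_1+\cdots+a_rE_r$; the corresponding filtration $\mathcal I(D)=\{I(nD)\}$ is then an integral $s$-divisorial filtration and $I(nD)=\overline{I^n}$ for all $n$.

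Putting this together, $\overline{R[\mathcal I]}=\sum_{n\ge 0}I(nD)\,t^n=R[\mathcal I(D)]$, which is exactly the definition of $\mathcal I$ being a bounded $s$-filtration. There is no real obstacle: all the substance lies in Proposition \ref{PropAS30*} (the fact that equimultiplicity forces every Rees valuation of $I$ to have center of dimension $s$) and in the fact, recorded in Remark \ref{Rem1}, that for adic filtrations the integral closure of the Rees algebra coincides with the Rees algebra of the integral closures of the powers. The only point worth checking carefully is that one really obtains an \emph{integral} (not merely real) $s$-divisorial filtration, which is immediate from $a_i\in\ZZ_{>0}$.
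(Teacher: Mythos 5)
Your proof is correct and is precisely the argument the paper intends: the Corollary is stated without proof as an immediate consequence of Proposition \ref{PropAS30*}, combined with the identification $\overline{R[\mathcal I]}=\sum_{n\ge 0}\overline{I^n}t^n$ from Remark \ref{Rem1}, which is exactly what you spell out. The observation that the $a_i$ are positive integers, so the resulting $s$-divisorial filtration is integral, is the right point to check and you have it.
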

 
 A much more difficult to prove  form of Proposition \ref{PropAS30*} is true in a locally formally equidimensional Noetherian ring $A$. If $I$ is an equimultiple ideals in $A$, then for all $n\ge 0$, every associated prime of $\overline{I^n}$ has height $\ell=\ell(I)$. This statement  follows from \cite[Theorem 2.12]{Ra}. By \cite[Lemmas 8.42 and B.47]{HS}, we may assume that $R$ has an infinite residue field. By \cite[Proposition 8.3.7]{HS}, $I$ then has a reduction $J$ generated by $\ell$ elements. By \cite[Theorem 2.12]{Ra} or \cite[Corollary 5.4.2]{HS}, every associated prime of $\overline{J^n}=\overline{I^n}$ has height $\ell$.
 
 \begin{Example}\label{ExAS1} There exists a height one prime ideal $P$ in a normal, excellent 3 dimensional local ring $R$ and $d>0$  such that 
    the $P$-adic filtration  $\{P^n\}_{n\in \NN}$ of $R$ is a bounded 2-filtration but 
  $2=\ell(P)>\mbox{ht}(P)=1$, so that $P$ is not equimultiple.
    \end{Example}

  \begin{proof} Let $k$ be a field and   $k[x,y,z,w]$ be a polynomial ring over $k$. Let  
  $$R
  =(k[x,y,z,w]/(xy-zw))_{(x,y,z,w)}.
  $$
   Let $\overline x, \overline y, \overline z, \overline w$ be the respective classes of $x,y,z,w$ in $R$.
  Let $P=(\overline x,\overline z)$, which is a height 1 prime ideal in $R$.
  
 The blowup   $\pi:X=\mbox{Proj}(\oplus_{n\ge 0}P^n)\rightarrow \mbox{Spec}(R)$ of $P$ is such that $X$ is nonsingular, and $\pi^{-1}(m_R)\cong \PP_k^1$. This simple and well known calculation is outlined in Exercise 6.16 on page 125 of \cite{AG}.
  Thus
 $\ell(P)=\dim \pi^{-1}(m_R)+1=2$, and so $1=\mbox{ht}(P)<\ell(P)=2$ and so $P$ is not equimultiple.
  
 Let $E=\mbox{Spec}(\mathcal O_X/P\mathcal O_X)$, an integral surface on $X$, so that  $\mathcal O_{X,E}$ is a valuation ring, with associated valuation $\nu_E$. Now $P^n\mathcal O_X=\mathcal O_X(-nE)$ for all $n$, and
 $$
 \overline{P^n}=\pi_*(P^n\mathcal O_X)=\pi_*(\mathcal O_X(-nE))=P^{(n)}=I(\nu_E)_n
 $$
 for all $n\in \NN$. Thus $\{P^n\}$ is a bounded 2-filtration.
 \end{proof}
 
 We conclude from Corollary \ref{CorAS31*} and Example \ref{ExAS1}, that in an excellent local domain of dimension $d$, the set of $I$-adic filtrations of equimultiple height $r$ ideals is a strict subset of the set of bounded $(d-r)$-filtrations.  
 
 The following theorem is a  generalization in excellent local domains of B\"oger's theorem from equimultiple ideals to the larger class of $s$-filtrations. Theorem \ref{BoGen} is a consequence of Theorem \ref{RBT}.

\begin{Theorem}\label{BoGen}
	Suppose that $R$ is an excellent local domain, $\mathfrak a$ is an $m_R$-primary ideal, $\mathcal I(1)$ is a real  bounded $s$-filtration and $\mathcal I(2)$ is an arbitrary filtration with $\mathcal I(1)\subset \mathcal I(2)$.  Then  $e_{R_{\mfp}}(\mathcal I(1)_\mfp)=e_{R_{\mfp}}(\mathcal I(2)_\mfp)$ for all $\mfp\in \Min(R/I(1)_1)$ if and only if there is equality of  integral closures  
	\begin{equation}\label{eqBoGen}
	\overline{\sum_{m\ge 0}I(1)_mt^m}=\overline{\sum_{m\ge 0}I(2)_mt^m}
	\end{equation}
	in $R[t]$.	
	\end{Theorem}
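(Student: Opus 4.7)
The plan is to derive both implications from Theorem \ref{RBT}, translating between the local multiplicities at minimal primes of $I(1)_1$ and the global multiplicity $e_s$ by means of the associativity formula (\ref{M10*}) from Proposition \ref{Prop2}, and by using Lemma \ref{LemmaIC2} to transfer integral-closure equalities to localizations.

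For the forward implication, assume (\ref{eqBoGen}). By Lemma \ref{LemmaIC3} the hypothesis forces $V(I(1)_1) = V(I(2)_1)$, and in particular $\Min(R/I(1)_1) = \Min(R/I(2)_1)$. Fix $\mfp \in \Min(R/I(1)_1)$. Then both $\mathcal I(1)_\mfp$ and $\mathcal I(2)_\mfp$ are filtrations of $\mfp R_\mfp$-primary ideals. Lemma \ref{LemmaIC2} localizes the equality of integral closures to
\[
\overline{R_\mfp[\mathcal I(1)_\mfp]} = \overline{R_\mfp[\mathcal I(2)_\mfp]},
\]
and since $R_\mfp$ is analytically unramified, applying Theorem \ref{Rees1}(ii) inside $R_\mfp$ at $s=0$ (with any $\mfp R_\mfp$-primary ideal in place of $\mfa$) delivers $e_{R_\mfp}(\mathcal I(1)_\mfp) = e_{R_\mfp}(\mathcal I(2)_\mfp)$.

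For the backward implication, set $s = s(\mathcal I(1))$. By the associativity formula (\ref{M10*}),
\[
e_s(\mfa, \mathcal I(i)) = \sum_{\mfp} e_{R_\mfp}(\mathcal I(i)_\mfp)\, e_\mfa(R/\mfp) \qquad (i = 1, 2),
\]
the sum ranging over $\mfp \in \Spec(R)$ with $\dim R/\mfp = s$ and $\dim R_\mfp = d - s$. Since $R$ is an excellent local domain, it is catenary, so these two dimension conditions are equivalent and automatically paired. Any $\mfp$ contributing nonzero local multiplicity to the sum for $\mathcal I(2)$ must contain $I(2)_1 \supset I(1)_1$, and having $\dim R/\mfp = s = \dim R/I(1)_1$ it must therefore lie in $\Min(R/I(1)_1)$; the same conclusion holds for the sum over $\mathcal I(1)$. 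Hence the hypothesis yields the equality term-by-term, so $e_s(\mfa, \mathcal I(1)) = e_s(\mfa, \mathcal I(2))$, and Theorem \ref{RBT} applied to the pair $\mathcal I(1) \subset \mathcal I(2)$ produces the desired equality of integral closures.

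The main subtle step is the bookkeeping in the backward direction: checking that the hypothesis, which is phrased in terms of minimal primes of $I(1)_1$ rather than of $I(2)_1$, nevertheless controls every prime that appears with nonzero contribution in the associativity formula for $\mathcal I(2)$. This relies on the inclusion $I(1)_1 \subset I(2)_1$ together with the equality $\dim R/\mfp + \dim R_\mfp = d$ in an excellent local domain to force the relevant primes to dimension $s$ to be minimal over $I(1)_1$; the rest of the argument is a straightforward invocation of the already-established Theorem \ref{RBT}.
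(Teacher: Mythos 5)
Your proof is correct. The backward implication is essentially the paper's argument: use the associativity formula from Proposition \ref{Prop2} to deduce $e_s(\mathfrak{a},\mathcal{I}(1))=e_s(\mathfrak{a},\mathcal{I}(2))$ from the term-wise hypothesis, then invoke Theorem \ref{RBT}. The bookkeeping you do—arguing that any prime with $\dim R/\mathfrak{p}=s$ appearing with nonzero local multiplicity for $\mathcal{I}(2)$ must lie in $\Min(R/I(1)_1)$ because it contains $I(2)_1\supset I(1)_1$ and has coheight equal to $\dim R/I(1)_1$—is sound, and in fact sidesteps the paper's explicit appeal to the equidimensionality of $V(I(1)_1)$ (which is what the bounded $s$-filtration hypothesis on $\mathcal{I}(1)$ supplies).

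Your forward implication, however, takes a genuinely different route from the paper. The paper proves a single biconditional chain: it uses the inequality $e_{R_{\mathfrak{p}}}(\mathcal{I}(1)_{\mathfrak{p}})\ge e_{R_{\mathfrak{p}}}(\mathcal{I}(2)_{\mathfrak{p}})$ (coming from $\mathcal{I}(1)\subset\mathcal{I}(2)$, via Lemma \ref{FiltIneq}) to show that term-wise equality of local multiplicities is equivalent to the single global equality $e_s(\mathfrak{a},\mathcal{I}(1))=e_s(\mathfrak{a},\mathcal{I}(2))$, and then gets both directions at once from Theorem \ref{RBT}. You instead localize the integral-closure equality via Lemma \ref{LemmaIC2}, use Lemma \ref{LemmaIC3} to keep track of minimal primes, and apply Theorem \ref{Rees1} directly in each $R_{\mathfrak{p}}$ at $s=0$. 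This local argument is cleaner conceptually and does not need the comparison inequality nor the real bounded $s$-filtration hypothesis for that direction (the bounded hypothesis is needed only for the converse of Rees's theorem in Theorem \ref{RBT}); the paper's approach is more economical in that it establishes both implications simultaneously once the equivalence with the global multiplicity is in hand. Both are valid, and the two backward arguments coincide.
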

	


\begin{proof} 
By Proposition \ref{Prop2}, we have that
\begin{equation}\label{eqBo1}
e_s(\mfa,\mathcal I(1))=\sum_{\mfp\in \Min(R/I(1)_1)}e_{R_{\mfp}}(\mathcal I(1)_{\mfp})e_{\mfa}(R/\mfp).
\end{equation}

Since $I(1)_1\subset I(2)_1$, we have that $V(I(2)_1)\subset V(I(1)_1)$. Thus $s(\mathcal I(2))\le s(\mathcal I(1))=s$. Since $V(I(1)_1)$  is equidimensional of dimension $s$, we have by Proposition \ref{Prop2} that 
\begin{equation}\label{eqBo2}
e_s(\mfa,\mathcal I(2))=\sum_{\mfp\in \Min(R/I(1)_1)}e_{R_{\mfp}}(\mathcal I(2)_{\mfp})e_{\mfa}(R/\mfp).
\end{equation}
Now $\mathcal I(1)\subset\mathcal I(2)$ implies
\begin{equation}\label{eqBo3}
e_{R_{\mfp}}(\mathcal I(1)_{\mfp})\ge e_{R_{\mfp}}(\mathcal I(2)_{\mfp})
\end{equation}
for all $\mfp\in \Min(R/I(1)_1)$. By equations (\ref{eqBo1}), (\ref{eqBo2}) and (\ref{eqBo3}), we have that
$e_s(\mfa,\mathcal I(1))=e_s(\mfa,\mathcal I(2))$ if and only if
$e_{R_{\mfp}}(\mathcal I(1)_{\mfp})=e_{R_{\mfp}}(\mathcal I(2))$ for all $\mfp\in \Min(R/I(1)_1)$. The theorem now follows from  Theorem \ref{RBT}, which shows that $e_s(\mfa,\mathcal I(1))=e_s(\mfa,\mathcal I(2))$ if and only if the equality of integral closures of (\ref{eqBoGen}) holds.
\end{proof}

\end{document}